\renewcommand{\leq}{\leqslant}
\renewcommand{\geq}{\geqslant}
\newcommand\widecheck[1]{%
\savestack{\tmpbox}{\stretchto{%
  \scaleto{%
    \scalerel*[\widthof{\ensuremath{#1}}]{\kern-.4pt\bigwedge\kern-.4pt}%
    {\rule[-\textheight/2]{1ex}{\textheight}}%WIDTH-LIMITED BIG WEDGE
  }{\textheight}% 
}{0.5ex}}%
\stackon[2pt]{#1}{\scalebox{-1}{\tmpbox}}%
}
\numberwithin{equation}{section}
\def\stacksum#1#2{{\stackrel{{\scriptstyle #1}}
{{\scriptstyle #2}}}}
\newcommand{\FT}{\mathrm{FT}}
\newcommand{\Sym}{\mathrm{Sym}}
\newcommand{\sym}{\mathrm{sym}}
\newcommand{\Cc}{\mathbf{C}}
\newcommand{\Ct}{\mathbf{C}^\times}
\newcommand{\Aa}{\mathbf{A}}
\newcommand{\Gm}{\mathbb{G}_{m}}
\newcommand{\Zz}{\mathbf{Z}}
\newcommand{\Pp}{\mathbf{P}}
\newcommand{\Rr}{\mathbf{R}}
\newcommand{\Ql}{\mathbf{Q}_{\ell}}
\newcommand{\Fq}{{\mathbf{F}_q}}
\newcommand{\Fqt}{{\mathbf{F}^\times_q}}
\newcommand{\mcW}{\mathcal{W}}
\newcommand{\HYPK}{\mathcal{K}\ell}
\newcommand{\KL}{\mathcal{K}\ell}
\newcommand{\mods}[1]{\,(\mathrm{mod}\,{#1})}
\newcommand{\what}{\widehat}
\newcommand{\ra}{\rightarrow}
\DeclareMathOperator{\rank}{rank}
\DeclareMathOperator{\Kl}{\mathrm{Kl}}
\DeclareMathOperator{\sw}{Swan}
\newcommand{\eps}{\varepsilon}
\renewcommand{\rho}{\varrho}
\DeclareMathOperator{\SL}{SL}
\DeclareMathOperator{\GL}{GL}
\DeclareMathSymbol{\gena}{\mathord}{letters}{"3C}
\DeclareMathSymbol{\genb}{\mathord}{letters}{"3E}
\def\intc{\frac{1}{2i\pi}\mathop{\int}\limits}
\newcommand{\qoo}{q^{o(1)}}
\theoremstyle{plain}
\newtheorem{theorem}{Theorem}[section]
\newtheorem*{theorem*}{Theorem}
\newtheorem{lemma}[theorem]{Lemma}
\newtheorem{corollary}[theorem]{Corollary}
\newtheorem{proposition}[theorem]{Proposition}
\theoremstyle{remark}
\theoremstyle{definition}
\newtheorem{remark}[theorem]{Remark}
\newcommand{\mcL}{\mathcal{L}}
\newcommand{\mcC}{\mathcal{C}}
\newcommand{\rmL}{\mathrm{L}}
\newcommand{\mcF}{\mathcal{F}}
\newcommand{\mcZ}{\mathcal{Z}}
\newcommand{\mcK}{\mathcal{K}}
\newcommand{\mcJ}{\mathcal{J}}
\newcommand{\mcG}{\mathcal{G}}
\newcommand{\lf}{\lambda_f}
\newcommand{\vphi}{\varphi}
\renewcommand{\geq}{\geqslant}
\renewcommand{\leq}{\leqslant}
\renewcommand{\Re}{\mathfrak{Re}\,}
\newcommand{\ov}[1]{\overline{#1}}
\newcommand\sumsum{\mathop{\sum\sum}\limits}
\newcommand{\sumstar}{\sideset{}{^\star}\sum}
\newcommand{\sumsumstar}{\sideset{}{^\star}\sumsum}
\begin{document}
\title{Algebraic twists of $\GL_3\times\GL_2$ $L$-functions}
%\author{Emmanuel Kowalski}\address{ETHZ, Switzerland }
%\email{emmanuel.kowalski@ethz.ch}

\author{Yongxiao Lin}
\address{EPFL/MATH/TAN, Station 8, CH-1015 Lausanne, Switzerland }
\email{yongxiao.lin@epfl.ch}

\author{Philippe Michel}
\address{EPFL/MATH/TAN, Station 8, CH-1015 Lausanne, Switzerland }
\email{philippe.michel@epfl.ch}

\author{Will Sawin}
\address{Department of Mathematics, Columbia University, 2990 Broadway, New York, NY 10027, USA }
\email{sawin@math.columbia.edu}

%\date{\today,\ \thistime} 

\begin{abstract} 
  We prove that the coefficients of a $\GL_3\times\GL_2$ Rankin--Selberg $L$-function 
  do not correlate with a wide class of trace functions of small conductor modulo
  primes, generalizing the corresponding result~\cite{FKM1}
  for~$\GL_2$ and \cite{KLMS} for $\GL_3$. This result is inspired  by a recent work of P. Sharma who discussed the case of a Dirichlet character of prime modulus. 
\end{abstract}

\thanks{Y. L. and Ph.\ M. were partially supported by a
  DFG-SNF lead agency program grant (grant
  200020L\_175755) and the SNF (grant 200021\_197045). W. S. served as a Clay Research Fellow while working on this paper. To appear in American Journal of Math. \today}

\maketitle

\tableofcontents

\section{Introduction}
We begin by describing the content of a recent preprint of P. Sharma \cite{sharma} which is the starting point of the present work.

Let $(\lambda(r,n))_{r,n}$ be the Hecke eigenvalues of a $\GL_3$ cusp form $\vphi$ and $(\lf(m))_m$ be Hecke eigenvalues of a $\GL_2$ cusp form $f$ (holomorphic or Maass); for simplicity we assume that both have level $1$ (i.e., are $\SL_3(\Zz)$ and $\SL_2(\Zz)$-invariant respectively). Their Rankin--Selberg $L$-function is the Dirichlet series
$$L(\vphi\times f,s)=\sum_{n,r\geq 1}\frac{\lambda(r,n)\lf(n)}{(nr^2)^s},\ \Re s>1.$$
This has an Euler product of degree $6$ admitting analytic continuation to $\Cc$ and a functional equation relating $L(\vphi\times f,s)$ to $L(\ov{\vphi}\times \ov{f},1-s).$

Given $q$ a prime and  $\chi:(\Zz/q\Zz)^\times\ra\Ct$ a non-trivial Dirichlet character, the twisted $L$-function is 
$$L(\vphi\times f\times\chi,s)=\sum_{n,r\geq 1}\frac{\lambda(r,n)\lf(n)\chi(nr^2)}{(nr^2)^s},\ \Re s>1.$$
This again has an Euler product of degree $6$ admitting analytic continuation to $\Cc$ and a functional equation relating $L(\vphi\times f\times\chi,s)$ to $L(\ov{\vphi}\times \ov{f}\times\ov \chi,1-s).$ 

The following bound, known as the {\em convexity bound} (in the $q$-aspect) is not too hard to establish: for $\Re s=1/2$, one has
$$L(\vphi\times f\times\chi,s)\ll_{f,\vphi,s} q^{3/2+o(1)}.$$
The subconvexity problem aims at improving the exponent $3/2$. In \cite{sharma}, P. Sharma provided a detailed description of a solution of this problem with the bound 
$$L(\vphi\times f\times\chi,s)\ll_{f,\vphi,s} q^{3/2-1/16+o(1)}.$$
Inspired by the previous works \cite{FKM1,KLMS}, we follow Sharma's strategy and generalise this bound by replacing $\chi$ by a generic {\em trace function} 
$$K:(\Zz/q\Zz)^\times\ra\Cc,$$
that is the (restriction to $\Fqt$ of the) Frobenius trace function associated to some geometrically irreducible middle extension sheaf $\mcF$  on $\Pp^1_{\Fq}$ pure of weight $0$ satisfying additional generic condition (we call such a sheaf ``good").

Indeed (see \S \ref{secdual}), it follows from the (Mellin) expansion of $K_{|\Fqt}$ into Dirichlet characters that the series
$$L(\vphi\times f\times K,s)=\sum_\stacksum{n,r\geq 1}{(nr^2,q)=1}\frac{\lambda(r,n)\lf(n)K(nr^2)}{(nr^2)^s},\ \Re s>1,$$
has analytic continuation to $\Cc$ and satisfies a functional equation relating $L(\vphi\times f\times K,s)$ to
$L(\ov{\vphi}\times \ov{f}\times \widecheck{K}^6,s)$ where $\widecheck{K}^6:\Fqt\ra\Cc$ is a suitable ``$\GL_6$" transform of $K$ (see \eqref{hatK6}). In most cases, $\widecheck{K}^6$ is essentially (the restriction to $\Fqt$ of) a trace function. From this one can deduce a ``convexity" bound
\begin{equation}\label{ConvexKtwisted}
L(\vphi\times f\times K,s)\ll_{f,\vphi,s} q^{3/2+o(1)},\ \Re s=1/2.	
\end{equation}
 Our goal is to improve the exponent $3/2$.

Using approximate functional equation techniques, one sees that solving this ``subconvexity problem"  for  $L(\vphi\times f\times K,s)$ is tantamount to bounding non-trivially sums of the shape
$$S^t_V(K,X):=\sum_{r,n}\lambda(r,n)\lf(n)K(nr^2)V(\frac{nr^2}{X})$$
for $V$ a (fixed) smooth function with compact support in $[1,2[$ and  $X\geq 1$ a positive parameter of size $X\approx q^3$ (the trivial bound being $|S^t_V(K,X)|\leq X^{1+o(1)}$). Indeed when $\vphi$, $f$ and $s$ are fixed, $q^3$ is approximately the square-root of the conductor of the degree $6$ $L$-functions $L(\vphi\times f\times \chi,s)$ for the primitive Dirichlet characters $\chi\mods q$.	

For this we introduce the following conditions on the sheaf $\mcF$ both of which are generic (i.e., hold for a ``typical" sheaf). Let $[\times\lambda] \colon \mathbb A^1 \to \mathbb A^1$ denote the map which multiplies the coordinate by $\lambda$.
\begin{itemize}
\item (MO) There is no any $ \lambda \in \Fqt$ such that the geometric monodromy group of $\mcF$ has some quotient which is equal, as a representation of the geometric fundamental group $\pi_1$ into an algebraic group, to the geometric monodromy group of the Kloosterman sheaf $[\times \lambda]^*\KL_2$ modulo $\pm 1$.

\item (SL) The local monodromy representation of $\mathcal F$ at $\infty$ has no summand with slope $1/2$.
\end{itemize}

We will also need to assume that the sheaf $\mcF$ is {\em Fourier}, {\em i.e.}, that it is not geometrically isomorphic to either the constant sheaf or any Artin--Schreier sheaf (whose trace functions are additive characters).

 We will say that $\mcF$ is {\em good} if $\mcF$ is Fourier and satisfies both of (MO) and (SL) and say that $\mcF$ is bad otherwise.

Our main result provides bounds for the sums $S^t_V(K,X)$ when $K$ is the trace function of a good sheaf, $X$ is not too far from $q^3$ and when the smooth function $V$ is allowed to vary and oscillate mildly as $q$ varies. 

\begin{theorem}\label{thmStVbound}  Let $q$ be a prime and $K$ be the trace function modulo $q$ associated with a good sheaf $\mcF$. Let $Z\geq 1$ be some parameter and $V\in\mcC^\infty_c(\Rr)$ be a smooth function compactly supported in the interval $[1,2[$ and satisfying for all $i\geq 0$
\begin{equation}\label{thmeqtestfct}
V^{(i)}(x)\ll_i Z^i.	
\end{equation}
Let $X\geq 1$ be such that $Z^4q^{11/4}<X<Z^4q^{\frac{7-\theta_3}{2}}$. We have 
	\begin{equation}\label{StVbound}
	S^t_V(K,X)\ll \qoo \left(Z{X^{3/4}q^{11/16}}+Z^{\frac{4(1-\theta_3)}{3-2\theta_3}}X^{\frac{2-\theta_3}{3-2\theta_3}}q^{\frac{11(1-\theta_3)}{4(3-2\theta_3)}}+
Xq^{-1/8}\right);	
	\end{equation}
	here the implicit constant depends on $\vphi,f$, $C(\mcF)$ and the implicit constant in \eqref{thmeqtestfct} and $\theta_3=5/14$ is the best known bound towards the Ramanujan--Petersson conjecture on $\rm GL_3$.
In particular for $X=q^3$ (the convexity range) one obtains 
  $$S^t_V(K,q^3)\ll Zq^{3-1/16+o(1)}.$$
	
\end{theorem}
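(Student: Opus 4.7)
The plan is to adapt the strategy of P.~Sharma \cite{sharma} from the Dirichlet character case to trace functions, in the spirit of \cite{FKM1} and \cite{KLMS}. After a smooth dyadic decomposition of the $r$-variable (write $r \sim R$, so that $n \sim N := X/R^2$) and a routine treatment of the range where $R$ is large by Cauchy--Schwarz in $n$ together with Rankin--Selberg bounds on $\sum |\lf(n)\lambda(r,n)|^2$, we reduce to bounding, for $R$ small,
$$S_R = \sum_{r \sim R} \sum_n \lambda(r,n)\lf(n)K(nr^2) W(r,n)$$
for a smooth compactly supported weight $W$. The first key step is to apply a Duke--Friedlander--Iwaniec $\delta$-symbol to detach $\lambda(r,n)$ from $\lf(n)K(nr^2)$: writing $\lambda(r,n)\lf(n)K(nr^2)=\sum_m \delta(n-m)\lambda(r,n)\lf(m)K(mr^2)$ and expanding $\delta$ into additive characters of moduli $c$ with $c\asymp \sqrt{N}$, the $n$ and $m$ variables become independent at the cost of a phase $e(a(n-m)/c)$.

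Next I would dualise on both sides. On the $n$-side, one applies $\GL_3$-Voronoi for $\vphi$ with modulus $c$, turning the $n$-sum into a dual sum over $(n_1,n_2)$ of length $\asymp c^3/N$ weighted by hyper-Kloosterman sums $\Kl_3$. On the $m$-side, one applies $\GL_2$-Voronoi for $f$ with composite modulus $qc$; since the $m$-weight contains the trace function $K(mr^2)$, the complete exponential sum modulo $q$ produced by Voronoi involves a Fourier-type transform of $K$, which is again the trace function of a middle-extension sheaf of bounded conductor precisely because $\mcF$ is Fourier. A Cauchy--Schwarz in the dual $\GL_3$-variable followed by opening the square then reduces matters to bounding (i) a bilinear moment of hyper-Kloosterman sums against additive characters, and (ii) a shifted second-moment correlation of the shape
$$\mcC(h) = \sum_{x \bmod q} \widetilde K(x)\ov{\widetilde K(x+h)}\, W_0(x)$$
for generic shifts $h \neq 0$, where $\widetilde K$ denotes the trace function emerging from the Voronoi--Poisson transformation on the $m$-side.

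The main obstacle is to establish square-root cancellation in $\mcC(h)$ for essentially all $h$, and it is precisely this step that motivates the good-sheaf hypotheses. Hypothesis (MO) prevents the sheaf underlying $\widetilde K$ from being geometrically isomorphic to a rescaled Kloosterman-type sheaf modulo $\pm 1$, which is the obstruction that would produce a diagonal main term in $\mcC(h)$, while (SL) controls the slopes at infinity and ensures the sheaves created by the successive Voronoi transforms remain middle extensions with bounded conductor. Granted these inputs, Deligne's Riemann hypothesis delivers the required cancellation in $\mcC(h)$, and combining with standard bilinear hyper-Kloosterman bounds and an optimisation over the dyadic parameter $R$ produces the three contributions of \eqref{StVbound}; the term featuring $\theta_3=5/14$ reflects the shallow range of $R$ where one must invoke the Kim--Sarnak bound on individual $\GL_3$-coefficients rather than averaging, while the term $Xq^{-1/8}$ reflects the completion loss in the $\delta$-method.
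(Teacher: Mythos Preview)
Your outline has two genuine gaps that would prevent the argument from reaching subconvexity.

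\textbf{Missing amplification.} You apply the $\delta$-symbol directly to $n-m$. The paper (following \cite{sharma}) first introduces an auxiliary average over primes $\ell\sim L$ via the Hecke relation $\lambda(1,\ell)\lambda(r,n)\approx\lambda(r,n\ell)$, and only then detects $n=m\ell$. This extra variable is essential: after Cauchy--Schwarz in the dual $\GL_3$-variable (with the $c$-sum kept \emph{inside} the square), the diagonal contribution is of size $q^3/L^{1/4}$. Without the $\ell$-average, the diagonal yields exactly the convexity bound $q^3$ and no saving is possible. The final choice is $L=q^{1/4}$; the third term $Xq^{-1/8}$ in \eqref{StVbound} comes from the off-diagonal after Poisson, not from ``completion loss.''

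\textbf{Structure of the key exponential sum.} The correlation that appears is not a simple shifted autocorrelation $\sum_x \widetilde K(x)\overline{\widetilde K(x+h)}$. After the two Voronoi steps (both with modulus $cq$, not $c$; the paper first detects $q\mid n-m\ell$ trivially so that the $\delta$-symbol modulus is $C=(XL/q)^{1/2}$), Cauchy--Schwarz, and Poisson, the modulus-$q$ piece is
\[
\sum_{v\bmod q} Z(v)\,\overline{Z'(v-\delta)},\qquad Z(v)=\frac{1}{q^{1/2}}\sum_{x\in\Fqt}\Kl_2(\beta\gamma x;q)\,K(xv)\,\Kl_2(\alpha xv;q),
\]
i.e.\ the trace function of a multiplicative convolution $\mcZ=(\mcF\otimes[\times\alpha]^*\KL_2)\star[y\mapsto\beta/y]^*\KL_2$. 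The role of (MO) is to guarantee that $\mcZ$ is geometrically irreducible (via Goursat for the product monodromy), while (SL) is used, through Deligne's semicontinuity theorem and the local convolution theory of Rojas-Le\'on, to locate the singularities of $\mcZ$ and compare the local monodromy at $0$ and at $\beta/\alpha$, thereby ruling out an isomorphism $\mcZ'\simeq[+\delta]^*\mcZ$ for $\delta\neq 0$. Your description of (SL) as merely a conductor-control hypothesis misses this; it is the ingredient that makes the $\delta\neq 0$ case of the correlation bound work.
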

\begin{remark}
(1) This bound is non-trivial (i.e., is $o(X)$) as long as
$$X\gg q^{3-1/4+\eta},\hbox{ for some }  \eta>0.$$
The assumption $X<Z^4q^{(7-\theta_3)/2}$ in the statement is non-essential and possibly can be removed. We make  this assumption only to simplify our treatment at one certain point; see \eqref{Lbound}.
\par
(2) For $k\geq 2$ an integer, the hyper-Kloosterman sheaf $\KL_k$ whose attached trace function is given by the $k-1$-dimensional hyper-Kloosterman sums
$$\Kl_k(n; q)=\frac{1}{q^{\frac{k-1}{2}}}\sumsum_\stacksum{x_1,\cdots,x_k\in\Fqt}{x_1.\cdots.x_k=n}e\big(\frac{x_1+\cdots+x_k}{q}\big)$$
is good unless $k=2$.
In that case neither (SL) nor (MO) holds. We will explain in \S \ref{secdual} how a duality principle which gives the analytic continuation of $L(\vphi\times f\times K,s)$ allows for partial results for such sums.

\par (3) As was pointed out to us by V. Blomer, when $\vphi=\Sym_2(g)$ is the symmetric square lift of a $\GL_2$-modular form $g$ and taking $f$ to be a suitable Eisenstein series, one can obtain --by a variation on \cite{BlomerAJM}, using the Petrow--Young variant of the Conrey--Iwaniec method \cite{CI,PY}-- the stronger subconvex bound
$$|L(\Sym_2(g)\times \chi,s)|^2\ll q^{3/2-1/4+o(1)},\ \Re s=1/2$$
for any Dirichlet character $\chi\mods q$. Since this approach uses positivity of central values, it is not yet clear whether this could be extended to general trace functions. 
\end{remark}

\subsection{Principle of the proof}
To illustrate the main idea of our approach, we provide a quick sketch of the proof, focusing on just the ``generic" case in various transformations. Therefore we will assume $X\asymp q^3$ and $r=1$, and we will suppress the smooth test functions from our notation. We denote the $\GL_3$ Hecke eigenvalues by $(\lambda(1,n))_n$ and the $\GL_2$ Hecke eigenvalues by $(\lambda(n))_n$. 

Let $L\geq 1$ be a parameter (a positive power of $q$) whose optimal size to be determined later, and let $\ell\in [L,2L[$ be prime numbers. We use the Hecke relation to write $\lambda(1,n\ell)\approx \lambda(1,n)\lambda(1,\ell)$ and then use the Kronecker symbol to separate oscillations of $\lambda(1,n\ell)$ and $\lambda(n)K(n)$. Our starting point is to follow \cite{sharma} and write
\begin{equation*}
\begin{split}
\mathcal{S}:=&\sum_{n\sim q^3}\lambda(1,n)\lambda(n)K(n)\\
\approx& \frac{1}{L} \sum_{\ell\sim L}\ov{\lambda(1,\ell)}\sum_{n\sim q^3}\lambda(1,n\ell)\lambda(n)K(n)\\
=&\frac{1}{L} \sum_{\ell\sim L}\ov{\lambda(1,\ell)}\sum_{n\sim q^3 \ell}\lambda(1,n)\sum_{m\sim q^3}\lambda(m)K(m)\delta(n,m\ell).
\end{split}
\end{equation*}
We first use a conductor-decreasing trick to write 
\begin{equation*}
\begin{split}
\delta(n,m\ell)=\delta_{q|n-m\ell}\cdot \delta\left(\frac{n-m\ell}{q},0\right).
\end{split}
\end{equation*}
Expressing $\delta_{q|n-m\ell}=\frac{1}{q}\sum_{u(q)}e\left(\frac{(n-m\ell)u}{q}\right)$ and using the delta symbol by Duke--Friedlander--Iwaniec \cite{DFI1.5} to detect the second one:
$$\delta\left(\frac{n-m\ell}{q},0\right)\approx \frac{1}C\sum_{c\sim C}\frac{1}{c}\sumstar_{u(c)}e\left(\frac{u(n-m\ell)}{cq}\right),$$
we have the following approximation
$$\delta(n,m\ell)\approx \frac{1}{qC}\sum_{c\sim C}\frac{1}{c}\sumstar_{u(cq)}e\left(\frac{u(n-m\ell)}{cq}\right).$$
Here $C$ is a large parameter which we will choose as 
$$C=\left(\frac{q^3L}{q}\right)^{1/2}=qL^{1/2}.$$
Now plugging this approximation for $\delta(n,m\ell)$ in, we can write our original sum as
\begin{equation*}
\begin{split}
\mathcal{S}\approx &\frac{\sqrt{q^3L}\sqrt{q^3}}{LCq} \sum_{c\sim C}\frac{1}{c}\sum_{\ell\sim L}\ov{\lambda(1,\ell)}\sumstar_{u(cq)}\sum_{n\sim q^3L}\frac{\lambda(1,n)}{\sqrt{n}}e\left(\frac{un}{cq}\right)\sum_{m\sim q^3}\frac{\lambda(m)}{\sqrt{m}}K(m)e\left(\frac{-um\ell}{cq}\right).
\end{split}
\end{equation*}
We use Fourier inversion to separate the $m$-variable from $K(m)$ and rewrite the $m$-sum above as
$$\frac{1}{q^{1/2}}\sum_{b\mods q}\widehat{K}(b)\sum_{m\sim q^3}\frac{\lambda(m)}{\sqrt{m}}e\left(\frac{-(bc+u\ell)m}{cq}\right).$$
We now use Voronoi summation to dualize the $n$- and $m$-variables respectively, getting
$$\sum_{n\sim q^3L}\frac{\lambda(1,n)}{\sqrt{n}}e\left(\frac{un}{cq}\right)\approx \sum_{n\sim \frac{(cq)^3}{q^3L}\approx q^3L^{1/2}}\frac{\lambda(n,1)}{\sqrt{n}}\Kl_2(\bar{u}n;cq);$$
$$\sum_{m\sim q^3}\frac{\lambda(m)}{\sqrt{m}}e\left(\frac{-(bc+u\ell)m}{cq}\right)\approx \sum_{m\sim \frac{(cq)^2}{q^3}\approx qL}\frac{\ov{\lambda(m)}}{\sqrt{m}}e\left(\frac{\ov{bc+u\ell}\,m}{cq}\right).$$
Therefore, after applying the two Voronoi summations, we arrive at the following dual sum
\begin{equation*}
\begin{split}
\mathcal{S}\approx &\frac{\sqrt{q^3L}}{LCq}\frac{q^2}{C} \sum_{c\sim C}\frac{1}{c}\sum_{\ell\sim L}\ov{\lambda(1,\ell)}\sum_{n\sim q^3L^{1/2}}\frac{\lambda(n,1)}{\sqrt{n}}\sum_{m\sim qL}\ov{\lambda(m)}\\
&\quad\quad\quad \frac{1}{q^{1/2}}\sumstar_{u(cq)}\sum_{b\mods q}\widehat{K}(b)\Kl_2(\bar{u}n;cq)e\left(\frac{\ov{bc+u\ell}\,m}{cq}\right).
\end{split}
\end{equation*}
By splitting the sum modulo $cq$ into a product of sums modulo $c$ and $q$ respectively, the second line above can be written as 
\begin{equation*}
\begin{split}
&\sumstar_{u_1(c)}\Kl_2(\bar{q}^2\ov{u_1}n;c)e\left(\frac{\ov{u_1\ell q}m}{c}\right)\cdot \frac{1}{q^{1/2}}\sum_{b\mods q}\widehat{K}(b)\sumstar_{u(q)}\Kl_2(\bar{c}^2\bar{u}n;q)e\left(\frac{\ov{bc+u\ell}\bar{c}m}{q}\right)\\
\approx &\sqrt{c}e\left(\frac{n\bar{q}\ell\ov{m}}{c}\right)\sumstar_{u(q)}\Kl_2(\bar{c}^3\bar{u}n;q)L_{\bar{c}^2m,\ell}(u;q),
\end{split}
\end{equation*}
where
$$L_{\alpha,\beta}(u;q):=\frac{1}{q^{1/2}}\sum_{b\mods q}\widehat{K}(b)e\left(\frac{\alpha\, \ov{b+\beta u}}{q}\right).$$
Hence
\begin{equation*}
\begin{split}
\mathcal{S}\approx &\frac{q^{5/2}}{L^{1/2}C^{3/2}} \sum_{n\sim q^3L^{1/2}}\frac{\lambda(n,1)}{\sqrt{n}}\sum_{c\sim C}\frac{1}{c}\sum_{\ell\sim L}\ov{\lambda(1,\ell)}\sum_{m\sim qL}\ov{\lambda(m)}e\left(\frac{n\bar{q}\ell\ov{m}}{c}\right)\sumstar_{u(q)}\Kl_2(\bar{c}^3\bar{u}n;q)L_{\bar{c}^2m,\ell}(u;q).
\end{split}
\end{equation*}
We next apply Cauchy--Schwarz inequality to remove the $\GL_3$ coefficients while keeping the $c$-sum ``inside":
\begin{multline*}
\mathcal{S}\ll\frac{q^{5/2}}{L^{1/2}C^{3/2}} \left(\sum_{n\sim q^3L^{1/2}}\frac{|\lambda(n,1)|^2}{n}\right)^{1/2}\left(\sum_{n\sim q^3L^{1/2}}\big|\sum_{c\sim C}\sum_{\ell\sim L}\sum_{m\sim qL}\sumstar_{u(q)} (...)\big|^2\right)^{1/2}\\
:=\frac{q^{5/2}}{L^{1/2}C^{3/2}}  \Omega^{1/2}.
\end{multline*}
\begin{remark}The $n$-variable has size $q^3L^{1/2}$ and was originally weighted by the $\GL_3$ coefficients $\lambda(n,1)$ times a periodic arithmetic function of modulus $qc$. After applying Cauchy--Schwarz the resulting periodic functions have moduli $qcc'\approx q^3L$ which is not much bigger than the size of $n$ which is now smooth and therefore one can expect to be able to analyse this sum further.
 \end{remark}
We continue to analyse the sum $\Omega$. Opening the square and switching the order of summations, we arrive at 
\begin{equation*}
\begin{split}
\Omega=&\sum_{c\sim C}\frac{1}{c}\sum_{c'\sim C}\frac{1}{c'}\sum_{\ell\sim L}\ov{\lambda(1,\ell)}\sum_{\ell'\sim L}\lambda(1,\ell')\sum_{m\sim qL}\ov{\lambda(m)}\sum_{m'\sim qL}\lambda(m')\\
&\sumstar_{u(q)}L_{\bar{c}^2m,\ell}(u;q)\sumstar_{u'(q)}\ov{L_{\ov{c'}^2m',\ell'}(u';q)}\sum_{n\sim q^3L^{1/2}}\Kl_2(\bar{c}^3\bar{u}n;q)\Kl_2(\ov{c'}^3\ov{u'}n;q)e\left(\frac{n\bar{q}\ell\ov{m}}{c}\right)e\left(\frac{-n\bar{q}\ell'\ov{m'}}{c'}\right).
\end{split}
\end{equation*}

We apply Poisson summation to the $n$-sum; after computing the resulting Fourier transform and several manipulations we obtain 
\begin{equation*}
\begin{split}
\Omega\approx&\sum_{c\sim C}\frac{1}{c}\sum_{c'\sim C}\frac{1}{c'}\sum_{\ell\sim L}\ov{\lambda(1,\ell)}\sum_{\ell'\sim L}\lambda(1,\ell')\sum_{m\sim qL}\ov{\lambda(m)}\sum_{m'\sim qL}\lambda(m')\\
&\quad  q^3L^{1/2}\sum_{|n|<{L}^{1/2}}\sum_{v(q)}Z(v)\, \ov{Z'(v-n\ov{cc'})}\cdot \delta_{n+\ell\ov mc'-\ell'\ov{m'}c\equiv 0 \mods {cc'}},
\end{split}
\end{equation*}
%\begin{equation*}
%\begin{split}
%\sum_{n\sim q^3L^{1/2}}(...)=&\frac{q^3L^{1/2}}{\sqrt{qcc'}}\sum_{n<\frac{qcc'}{q^3L^{1/2}}}\frac{1}{\sqrt{q}}\sum_{v(q)}\Kl_2(\bar{c}^3\bar{u}v;q)\Kl_2(\ov{c'}^3\ov{u'}v;q)e\left(\frac{nvcc'}{q}\right)\\
%&\quad\quad\quad \frac{1}{\sqrt{cc'}}\sum_{\beta(cc')}e\left(\frac{\bar{q}\beta(\ell\ov mc'-\ell'\ov{m'}c)}{cc'}\right)e\left(\frac{\bar{q}\beta n}{cc'}\right)\\
%=&\frac{q^3L^{1/2}}{\sqrt{q}}\sum_{n<L^{1/2}}\frac{1}{\sqrt{q}}\sum_{v(q)}\Kl_2(\bar{c}^3\bar{u}v;q)\Kl_2(\ov{c'}^3\ov{u'}v;q)e\left(\frac{nv\ov{cc'}}{q}\right)\cdot \delta_{n+\ell\ov mc'-\ell'\ov{m'}c\equiv 0 \mods {cc'}}.
%\end{split}
%\end{equation*}
%Some manipulation shows that
%\begin{equation*}
%\begin{split}
%&\frac{1}{\sqrt{q}}\sum_{v(q)}\sumstar_{u(q)}L_{\bar{c}^2m,\ell}(u;q)\Kl_2(\bar{c}^3\bar{u}v;q)\sumstar_{u'(q)}\ov{L_{\ov{c'}^2m',\ell}(u';q)}\Kl_2(\ov{c'}^3\ov{u'}v;q)e\left(\frac{nv\ov{cc'}}{q}\right)\\
%=&\sqrt{q}\sum_{v(q)}Z(v)\, \ov{Z'(v-n\ov{cc'})},
%\end{split}
%\end{equation*}
where
$$Z(v)=Z_{\alpha,\beta,\gamma}(v):=\frac{1}{q^{1/2}}\sum_{x\in\Fqt}\Kl_2(\beta\gamma x;q)K(xv)\Kl_2(\alpha xv;q)$$
with $(\alpha,\beta,\gamma)=(\ov c^2m,\ell,\ov c^3)$ and $Z'(v)$ is defined likewise with $(\alpha',\beta',\gamma')=(\ov{c'}^2m',\ell',\ov{c'}^3)$.

%Recall after the Poisson summation, we are now at

We consider the cases where $n=0$ and $n\neq 0$ separately and denote their contribution to $\Omega$ by $\Omega_0$ and $\Omega_{\neq}$ respectively.

For $n=0$, one has $c=c'$, and since the sheaf $\mcF$ is good, we have by  Proposition \ref{sqrootcancel},
$$\sum_{v}Z(v)\ov{Z'(v)}\ll q\cdot \delta_{\ell m'\equiv \ell'm \mods q}+q^{1/2};$$
this gives
\begin{equation*}
\begin{split}
\Omega_0=&\sum_{c\sim C}\frac{1}{c^2}\sum_{\ell\sim L}\ov{\lambda(1,\ell)}\sum_{\ell'\sim L}\lambda(1,\ell')\sum_{m\sim qL}\ov{\lambda(m)}\sum_{m'\sim qL}\lambda(m')\\
&\ \ \ \ \ \ \times q^3L^{1/2}(q\cdot \delta_{\ell m'\equiv \ell'm \mods q}+q^{1/2}) \delta_{\ell m'\equiv \ell'm \mods {c}}\\
\ll&\frac{q^5L^{5/2}}{C}+\frac{q^{11/2}L^{9/2}}{C^2},
\end{split}
\end{equation*}
whose corresponding contribution to $\mathcal{S}$ is
$$\mathcal{S}_0\ll \frac{q^3}{L^{1/4}}+q^{11/4}L^{1/2}.$$
\begin{remark}This bound is admissible is long as $L$ is a (not too big) positive power of $q$ and corresponds  roughly to the contribution of the diagonal term $(c,\ell,m,u)=(c',\ell',m',u')$ in $\Omega$:
\begin{equation*}
\begin{split}
\mathcal{S}_{00}\ll&\frac{q^{5/2}}{L^{1/2}C^{3/2}} \left(q^3L^{1/2}\cdot\frac{1}{C}\cdot L\cdot qL\cdot q \right)^{1/2}\ll \frac{q^3}{L^{1/4}}.
\end{split}
\end{equation*}
This saving was the whole point of keeping the $c$-sum inside in the application of Cauchy--Schwarz.
This also shows how the additional parameter $\ell$ plays a role in this argument: without introducing the $\ell$-sum at the very beginning we would have been failed to beat the convexity bound $O(q^3)$ for the diagonal contribution. This trick is reminiscent of the amplification technique which seems to be used first by Munshi in \cite{Munshi16} (see also \cite{HN, Lin, KLMS,sharma}).
\end{remark}

For the case $n\neq 0$, we use again Proposition \ref{sqrootcancel},
$$\sum_{v(q)}Z(v)\ov{Z'(v-\delta)}\ll q^{1/2},\ \delta\not=0\mods q,$$
to obtain
\begin{equation*}
\begin{split}
\Omega_{\neq}=&\sum_{c\sim C}\frac{1}{c}\sum_{c'\sim C}\frac{1}{c'}\sum_{\ell\sim L}\ov{\lambda(1,\ell)}\sum_{\ell'\sim L}\lambda(1,\ell')\sum_{m\sim qL}\ov{\lambda(m)}\sum_{m'\sim qL}\lambda(m')\\
&\ \ \ \ \times q^3L^{1/2}\sum_{0\neq |n|<L^{1/2}}q^{1/2}\cdot \delta_{n+\ell\ov mc'-\ell'\ov{m'}c\equiv 0 \mods {cc'}}\\
\ll& \frac{q^{11/2}L^5}{C^2}+\frac{q^{9/2}L^4}{C}.
\end{split}
\end{equation*}
Correspondingly, such terms contribute to $\mathcal{S}$
$$\mathcal{S}_{\neq}\ll q^{11/4}L^{3/4}.$$

Therefore we have the following estimate for the sum of interest
$$\mathcal{S}\ll \frac{q^3}{L^{1/4}}+q^{11/4}L^{1/2}+q^{11/4}L^{3/4}.$$
By choosing $L$ appropriately, say $L=q^{1/4}$ we have
$$\mathcal{S}\ll q^{3-1/16},$$
beating the convexity bound $\mathcal{S}=O(q^{3+o(1)})$.

Key to this argument is the bound for the correlation sums
$$\sum_{v}Z(v)\ov{Z'(v-\delta)}\ll \delta_{\delta=0}\delta_{\alpha/\alpha'\equiv \beta\gamma/(\beta'\gamma') \mods q}q+q^{1/2}$$
which follows from Proposition \ref{sqrootcancel}. This proposition is proven  by interpreting the functions
$$v\mapsto Z(v), {Z'(v)}$$
as trace functions of $\ell$-adic sheaves $\mcZ,\mcZ'$ on the affine line $\Aa^1_\Fq$ and by using methods from $\ell$-adic cohomology. As the expression for $Z$ suggests, the underlying sheaf $\mcZ$ is the geometric convolution of the tensor product
$$\mcK=(\mcF \otimes [\times \alpha]^*\HYPK_2 ) \hbox{ with }\mcL=[y\ra\beta\gamma/y]^*\HYPK_2$$
(and likewise for $\mcZ'$). First we verify that if (MO) is  satisfied, then $\mcK$ and hence $\mcZ$ are geometrically irreducible. It remains to show that if (SL) is also satisfied, $\mcZ$ and $[-\delta]\mcZ'$ are not geometrically isomorphic if either $\delta\not=0$, or $\alpha/\alpha'\not=\beta\gamma/(\beta'\gamma')\mods q$. To do this, we analyse carefully the possible singularities of $\mcZ$ (at most at $\infty$, $0$ and $\beta\gamma/\alpha$) and determine their nature (unipotent or not) at $0,\beta\gamma/\alpha$ using Deligne's semi-continuity theorem and the theory of local convolution due to Rojas-Le\'{o}n. This last point is the most delicate part of the geometric argument.

This general approach is entirely different from the path taken by Sharma in the special case $K=\chi$ a multiplicative character \cite{sharma}: Sharma exploits the multiplicativity of $\chi$ and reduces the problem to checking the non-degeneracy criterion of Adolphson--Sperber for an explicit exponential sum in $9$ variables. For this special case, a third route is possible: the sheaf $\mcZ$ is an hypergeometric sheaf whose local and global properties were studied in depth by Katz in \cite{ESDE}; the estimation of the correlation sums above can then be deduced from these properties and the general bounds for correlations sums of trace functions like \cite{FKMS}.

\subsection*{Acknowledgements} We are greatful to V. Blomer for his comments and suggestions on the paper and to the anonymous referee for his/her careful reading, numerous helpful comments and for pointing out some errors and inaccuracies. A good amount of this work was carried out during the {\em Second Symposium on Analytic Number Theory} which took place in Cetraro in July 2019 and we would like to thank D. Bazzanella, S. Bettin and A. Perelli for their excellent organisation and their choice of the beautiful location.
 Finally we would like to thank heartily E. Kowalski for decisive discussions and numerous encouragements during the various phases of this project.

\section{Background material regarding automorphic forms}
In the sequel we will denote by $V$ or $W$ some smooth functions $V,W\in\mcC^{\infty}_c([1,2[)$ satisfying for all $i\geq 0$
\begin{equation}\label{eqtestfct}
V^{(i)}(x)\ll Z^i	
\end{equation}
for some parameter $Z$ satisfying $1\leq Z\leq q$.

\subsection{Bounds for Hecke eigenvalues}
We recall the Hecke relation for $\GL_3$ Hecke eigenvalues implies that
\begin{equation*}
\lambda(r,n)=\sum_{d|(r,n)}\mu(d)\lambda(r/d,1)\lambda(1,n/d)
\end{equation*}
and 
\begin{equation}
 \label{eq-GL3-Hecke}
\lambda(1,m)\lambda(r,n)=\mathop{\sum_{d_0d_1d_2=m}}_{d_1|r,d_2|n}\lambda(rd_2/d_1, nd_0/d_2)
\end{equation}
and we have the following individual bounds
\begin{equation}
 \label{eq-Kim-Sarnak}
\lambda(r,n)\ll (rn)^{\theta_3+o(1)},
\end{equation}
where $\theta_3=5/14$ according to Kim--Sarnak (see \cite{KimSar}).

We also have the following well-known Rankin--Selberg estimates for $\GL_2$ and $\GL_3$ Hecke eigenvalues (see ~\cite[Sec. 12.1]{Goldfeld})
\begin{equation}
  \label{eq:RS}
  \sum_{1\leq n\leq X}|\lambda_f(n)|^2\ll_{f} X,
\end{equation}
and
\begin{equation*}
  \label{eq-RS1.5}
  \sum_{1\leq m^2n\leq X}|\lambda(n,m)|^2\ll_{\vphi} X,
\end{equation*}
from which and the Hecke relation, one can also derive the bound (see ~\cite[Lemma 2]{Munshi16})
\begin{equation}
  \label{eq-RS2}
  \sum_{1\leq m^2n\leq X}|\lambda(n,m)|^2m\ll_{\vphi, \eps} X^{1+\eps},
\end{equation}
for any $\eps>0$.

%We will also need the $\GL_3$-analog of Wilton's bound proven by Miller~\cite{Miller}:
%\begin{equation}\label{millerbound}
%  \sum_{n\geq 1}\lambda(1,n)e(\alpha
%  n)V\Bigl(\frac{n}{X}\Bigr)\ll_{\vphi,Z} X^{3/4+\eps}
%  \end{equation}
%  for~$X\geq 2$, any~$\alpha\in\Rr$ and any~$\eps>0$, where the
%  implied constant is independent of~$\alpha$.

\subsection{Summation formulas}
Let $f$ be a $\GL_2$ cuspform of level $1$ and of weight $k$ (if $f$ is holomorphic) or of Laplace eigenvalue $1/4+r_f^2$ (if $f$ is Maass).

The $\GL_2$ Voronoi summation formula states:
\begin{proposition}\label{vorGL2}  Let $V$ be a test function as in \eqref{eqtestfct}. Let $X>0$, $c,u\geq 1$ be integers such that $(u,c)=1$. We have
$$\sum_{m=1}^\infty\lambda_f(m)e\left(\frac{um}{c}\right)V\left(\frac{m}{X}\right)=\frac{X}{c}\sum_{\pm}\sum_{m=1}^\infty \lambda_f(m)e\left(\mp\frac{\bar{u}m}{c}\right)\mathcal{V}^{\pm}\left(\frac{mX}{c^2}\right),$$
where
$$\mathcal{V}^{\pm}(y)=\int_0^{\infty} V(x)\mcJ^\pm_{f}(4\pi \sqrt{xy}){\mathrm{d}x}$$
with
$$\mcJ^+_{f}(x)=2\pi i^k J_{k-1}(x),\ \mcJ^-_{f}(x)=0\hbox{ if $f$ is holomorphic};$$
and  
\begin{equation*}\mcJ^+_{f}(x)=\frac{-\pi}{\sin \pi ir_f}(J_{2ir_f}(x)-J_{-2ir_f}(x)),\quad   \mcJ^-_{f}(x)=4\eps_f \cosh(\pi r_f)K_{2ir_f}(x)\, \quad \hbox{if $f$ is Maass}.
\end{equation*}	
\end{proposition}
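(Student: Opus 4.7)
My plan is to derive the formula from the functional equation of the additively twisted $L$-function
$$L_{f,u/c}(s)=\sum_{m\geq 1}\frac{\lambda_f(m)e(um/c)}{m^s}.$$
Since $(u,c)=1$, the additive character $m\mapsto e(um/c)$ can be expanded as a linear combination of Dirichlet characters of modulus dividing $c$ via Gauss sums:
$$e(um/c)=\frac{1}{\varphi(c)}\sum_{\chi\mods c}\chi(m)\overline{\chi(u)}\tau(\overline\chi)$$
(handling the principal character and imprimitive characters separately). Each $L(f\otimes\chi,s)$ is entire and satisfies a standard $\GL_2$ functional equation relating it to $L(\overline f\otimes\overline\chi,1-s)$, whose gamma factor is $\gamma_f(s)$ depending on whether $f$ is holomorphic of weight $k$ or Maass with spectral parameter $r_f$. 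Reassembling the Gauss sums on the dual side converts $\chi(m)\overline\chi(u)$ back into an additive character $e(-\bar u m/c)$, so one obtains the (distributional) identity
$$L_{f,u/c}(s)=c^{1-2s}\gamma_f(s)\sum_{\pm}L^{\pm}_{f,-\bar u/c}(1-s),$$
where the two signs correspond to the decomposition of the Mellin kernel $x^{-s}$ according to the parity or the holomorphic/non-holomorphic nature of $f$.

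Next I would apply Mellin inversion to $V$: writing $V(x)=\frac{1}{2\pi i}\int_{(\sigma)}\widetilde V(-s)x^{-s}ds$ with $\sigma$ large, the left-hand side of the Voronoi formula becomes
$$\frac{1}{2\pi i}\int_{(\sigma)}\widetilde V(-s)X^sL_{f,u/c}(s)ds.$$
Shift the contour to $\Re s=\sigma'<0$ (no poles since $L_{f,u/c}$ is entire at level $1$ in the range $\Re s>0$ we cross), apply the functional equation above, then expand $L_{f,-\bar u/c}(1-s)$ as a Dirichlet series and exchange summation and integration. This reduces the problem to identifying, for each $m$, the inverse Mellin transform
$$\mcV^{\pm}\!\left(\frac{mX}{c^2}\right)=\frac{1}{2\pi i}\int_{(\sigma')}\widetilde V(-s)\Bigl(\frac{mX}{c^2}\Bigr)^{s}\gamma_f^{\pm}(s)ds$$
as $\int_0^\infty V(x)\mcJ^{\pm}_f(4\pi\sqrt{xy})dx$ for the claimed Bessel kernels.

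The main technical step is this last identification of the integral kernel. It amounts to computing the Mellin transform of $\mcJ^{\pm}_f(4\pi\sqrt{\cdot})$. For the holomorphic case, one uses
$$\int_0^\infty J_{k-1}(4\pi\sqrt{y})y^{s-1}dy=\frac{\Gamma(s+(k-1)/2)}{\Gamma(k/2-s)}(2\pi)^{-2s}\cdot\text{const},$$
which matches the ratio of gamma factors appearing in $\gamma_f(s)$; in the Maass case the analogous identities for $J_{2ir_f}-J_{-2ir_f}$ and for $K_{2ir_f}$ supply the two signs and the correct cosh/sin normalisations. Fubini (justified by taking $\sigma'$ sufficiently negative and using the rapid decay of $\widetilde V$) then completes the proof. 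The only genuine obstacle is bookkeeping: tracking the $c$- and $X$-dependence through the Gauss sums, and verifying that the contour shifts are permitted on the nose, but all of this is standard once the characteristic Mellin--Bessel pair is in hand.
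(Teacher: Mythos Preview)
The paper does not supply a proof of this proposition: it is stated as a classical summation formula and then immediately followed by the corresponding $\GL_3$ statement (attributed to Miller--Schmid). So there is no ``paper's own proof'' to compare against; the result is quoted as background.

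Your sketch is a valid route to the formula. A small comment: the detour through the Dirichlet-character expansion of $e(um/c)$ is unnecessary and makes the bookkeeping with imprimitive characters and Gauss sums heavier than it needs to be. The additively twisted series $L_{f,u/c}(s)$ already satisfies a clean functional equation in its own right (obtained, for instance, from the modularity of $f$ under $\begin{psmallmatrix}0&-1\\c^2&0\end{psmallmatrix}$ applied to $f(z+u/c)$, or equivalently from the Hurwitz-type analysis of Hecke), so one can go straight from $L_{f,u/c}(s)$ to $L_{f,-\bar u/c}(1-s)$ without ever unpacking into multiplicative characters. Apart from that, your Mellin-inversion/contour-shift argument and the identification of the kernel via the Mellin transforms of $J_{k-1}$, $J_{2ir_f}-J_{-2ir_f}$, and $K_{2ir_f}$ is exactly the standard computation.
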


We also recall the $\GL_3$ Voronoi summation formula first proven by Miller and Schmid (\cite{MS}):
\begin{proposition}\label{vorGL3}  Let $W$ be a test function as in \eqref{eqtestfct}. Let $X>0$, $c,u,r\geq 1$ be integers such that $(u,c)=1$. We have
\begin{equation*}
\begin{split}
\sum_{n=1}^\infty \lambda(r,n)e\left(\frac{un}{c}\right)W\left(\frac{n}{N}\right)=c^{3/2}\sum_{\pm}\sum_{n_1|rc}\sum_{n=1}^{\infty}\frac{\lambda(n,n_1)}{n_1n}\frac{S\left(r\bar{u},\pm n;\frac{rc}{n_1}\right)}{c^{1/2}}\mathcal{W}_{\pm}\left(\frac{nn_1^2}{c^3r/N}\right),
\end{split}\end{equation*}
where
$$S(m,n;c)=\sumstar_{x(c)}e\left(\frac{mx+n\bar{x}}{c}\right)$$
is the Kloosterman sum and
  where 
  \begin{equation}\label{eq-GL3transform}
  \mcW_{\sigma}(x)= \frac{1}{2\pi i}\int_{(1)}x^{-s}\mcG^{\sigma} (s+1)
  \Bigl(\int_{0}^{+\infty}W(y)y^{-s}\frac{dy}{y}\Bigr)ds.	
  \end{equation}

%$$\mathcal{V}^{\pm}(y)=\int V(x)\mcJ^\pm_{\vphi}(\mp{xy}){\mathrm{d}x}$$
%with
%$$\mcJ^+_{\vphi}(x)=\cdots,\ \mcJ^-_{\vphi}(x)=\cdots$$	
\end{proposition}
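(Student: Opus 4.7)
The proof follows Miller--Schmid and derives the formula from the functional equations of the $\GL_3$ $L$-functions $L(\vphi \otimes \chi, s)$ twisted by primitive Dirichlet characters $\chi$. First, I would express the target sum via Mellin inversion as
$$\sum_{n\geq 1}\lambda(r,n) e(un/c) W(n/N) = \frac{1}{2\pi i}\int_{(\sigma)} N^s \Bigl(\int_0^\infty W(y) y^{s-1}\frac{dy}{y}\Bigr) \Bigl(\sum_{n\geq 1}\frac{\lambda(r,n)e(un/c)}{n^{s}}\Bigr) ds,$$
taking $\sigma$ sufficiently large for absolute convergence.

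Next I would decompose the additive character $e(un/c)$, using $(u,c)=1$, into primitive Dirichlet characters of conductor dividing $c$ via the identity $e(un/c) = \tfrac{1}{\varphi(c)}\sum_{\chi}\tau(\ov\chi)\chi(un)$. Combined with the Hecke relation \eqref{eq-GL3-Hecke}, this rewrites the inner Dirichlet series as a linear combination of $L$-functions $L(s, \vphi\otimes \chi)$ weighted by Gauss sums and elementary factors tracking $\gcd(r,c)$.

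Applying the $\GL_3$ functional equation for each twisted $L$-function and shifting the contour to $\Re s <0$ transforms the arithmetic side into a dual Dirichlet series of shape $\sum_n \lambda(n, n_1) \ov\chi(n) n^{s-1}$, where $n_1$ ranges over divisors of $rc$ that record the conductors of the dual primitive characters. Reassembling the Gauss sums $\tau(\ov\chi)\ov\chi(n)$ back into additive characters through Fourier inversion produces the Kloosterman sums $S(r\bar u, \pm n; rc/n_1)$, the $\pm$ sign reflecting the two branches of the archimedean gamma factors. The remaining Mellin--Barnes integral of $W$ combined with the gamma factor $\mathcal{G}^{\sigma}$ collapses to the integral transform $\mcW_{\sigma}$ defined in \eqref{eq-GL3transform}, and a final inverse Mellin transform yields the claimed identity.

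The main technical obstacle is the careful combinatorial bookkeeping when $\gcd(r,c)>1$: one must treat primitive characters of each conductor dividing $rc$ separately and then reassemble them using the Hecke relation to produce the clean divisor sum over $n_1 \mid rc$ together with the correct Hecke coefficient $\lambda(n, n_1)$ on the dual side. Miller and Schmid bypass this delicate calculation by working with automorphic distributions, where the action of the long Weyl element encodes these combinatorics uniformly; alternatively one can follow Goldfeld--Li by induction on the rank, starting from the classical $\GL_2$ Voronoi formula and using the Rankin--Selberg integral representation.
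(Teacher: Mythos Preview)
The paper does not prove this proposition; it merely states it and attributes it to Miller--Schmid \cite{MS}, with the kernel $\mcG^{\sigma}$ referred to \cite{BloKha}. There is therefore no proof in the paper to compare your proposal against.

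That said, your sketch follows the Goldfeld--Li line (functional equations of $L(\vphi\otimes\chi,s)$ plus character orthogonality) rather than the automorphic-distribution method actually used by Miller--Schmid, as you yourself note at the end. The outline is broadly correct, but one step as written is not quite right: the identity
\[
e\Bigl(\frac{un}{c}\Bigr)=\frac{1}{\varphi(c)}\sum_{\chi\mods c}\tau(\ov\chi)\chi(un)
\]
holds only when $(n,c)=1$; for $n$ sharing a factor with $c$ one must pass to smaller moduli, and this is precisely where the divisor sum over $n_1\mid rc$ and the Kloosterman sums arise. You flag this bookkeeping as the main obstacle, which is fair, but the identity should not be stated as if it held unconditionally. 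Also, your Mellin integrand should read $W(y)y^{s}\,dy/y$ rather than $W(y)y^{s-1}\,dy/y$ to match the Dirichlet series $\sum \lambda(r,n)e(un/c)n^{-s}$.
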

For the precise definition of the kernel function $\mcG^{\sigma} (s)$ we refer the reader to \cite[(4.6)]{BloKha}.

We recall the following lemma (see \cite[Lemma 4.3]{KLMS})
\begin{lemma}\label{bounds-for-V}
  Let $\sigma\in\{-1,1\}$. For any $j\geq 0$, any $A\geq 1$ and
  any~$\eps>0$, we have
$$
x^j\mcW_{\sigma}^{(j)}(x)\ll \min\Bigl( Z^{j+1}x^{1-\theta_3-\eps},
Z^{j+5/2+\eps}\Bigl(\frac{Z^3}{x}\Bigr)^A\Bigr)
$$
for~$x>0$, where the implied constant depends on~$(j,A,\eps)$.
Moreover, for $x\geq 1$, we have
$$
x^j\mcW_{\sigma}^{(j)}(x)\ll x^{2/3}\min(Z^j, x^{j/3})
$$
where the implied constant depends on~$j$.
\end{lemma}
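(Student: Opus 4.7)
The plan is to start from the Mellin--Barnes representation \eqref{eq-GL3transform} and differentiate $j$ times under the integral sign, giving
$$x^j\mcW_\sigma^{(j)}(x)=\frac{(-1)^j}{2\pi i}\int_{(\sigma_0)} s(s+1)\cdots(s+j-1)\,\mcG^\sigma(s+1)\,\tilde W(s)\, x^{-s}\,ds,$$
where $\tilde W(s)=\int_0^\infty W(y)\,y^{-s-1}dy$ is the Mellin transform of $W$. Two inputs then do the work. Since $W\in\mcC^\infty_c([1,2[)$ satisfies $W^{(i)}\ll Z^i$, repeated integration by parts gives $|\tilde W(s)|\ll_A \min(1,(Z/(1+|\Im s|))^A)$ uniformly for $\Re s$ in any bounded range. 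On the other hand, by Stirling applied to the explicit gamma factors of $\mcG^\sigma$ recorded in \cite[(4.6)]{BloKha}, on any vertical line $\Re s=\sigma_0$ avoiding the poles (located at $s=\alpha_i-1$ for the Langlands parameters $\alpha_i$ of $\vphi$, satisfying $|\Re\alpha_i|\leq\theta_3$ by Kim--Sarnak) one has polynomial growth $|\mcG^\sigma(s+1)|\ll (1+|\Im s|)^{3\sigma_0+3/2+\eps}$.

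For the first bound $Z^{j+1}x^{1-\theta_3-\eps}$, I would shift the contour to the line $\Re s=-(1-\theta_3-\eps)$. The bound $|\Re\alpha_i|\leq\theta_3$ guarantees that no poles of $\mcG^\sigma(s+1)$ are crossed. On the shifted line the integrand is controlled by $x^{1-\theta_3-\eps}\,(1+|\Im s|)^{j+3\theta_3-3/2+3\eps}\,\min(1,(Z/|\Im s|)^A)$; splitting the $\Im s$-integral at $|\Im s|=Z$ and taking $A$ sufficiently large produces the claimed $Z^{j+1}x^{1-\theta_3-\eps}$. The rapid-decay companion $Z^{j+5/2+\eps}(Z^3/x)^A$ follows dually: shifting the contour to a line $\Re s=\sigma_0$ far to the right, $x^{-s}$ supplies a factor $x^{-\sigma_0}$, $\tilde W(s)$ effectively truncates the $\Im s$-integral at scale $Z$, and Stirling contributes $Z^{3\sigma_0+3/2+\eps}$, which combined for appropriate $\sigma_0=A$ gives the stated decay.

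For the refined bound $x^{2/3}\min(Z^j,x^{j/3})$ valid for $x\geq 1$, a contour shift alone is insufficient and one needs the full Stirling asymptotic of $\mcG^\sigma(s+1)$ as $|\Im s|\to\infty$. Substituting this expansion turns the integral (after an appropriate change of variables) into an oscillatory integral with phase approximately $3\,\Im s\,\log|\Im s|-\Im s\log x$, whose saddle point lies near $|\Im s|\asymp x^{1/3}$. A stationary-phase analysis at this saddle yields the leading order $x^{2/3}$, while the two factors inside the minimum correspond to whether the effective size of $|\Im s|$ is cut off by the Mellin-transform decay scale $Z$ (when $Z\leq x^{1/3}$) or by the saddle location $x^{1/3}$ (when $Z\geq x^{1/3}$).

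The main obstacle is the uniformity in the stationary-phase step for the second bound: one must simultaneously absorb the non-stationary tails of the oscillatory integral and the polynomial factor $s(s+1)\cdots(s+j-1)$ without losing more than $Z^j$ or $x^{j/3}$ beyond the leading $x^{2/3}$. This is classical analysis of the $\GL_3$ Voronoi kernel, carried out in detail in \cite[Lemma 4.3]{KLMS} (building on Blomer--Khan); since the statement is quoted verbatim from there, in the actual write-up one can simply invoke that reference.
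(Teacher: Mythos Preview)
The paper does not prove this lemma at all; it is stated with the prefatory clause ``We recall the following lemma (see \cite[Lemma 4.3]{KLMS})'' and no argument is given. Your proposal is therefore strictly more than what the paper does: you outline the standard Mellin--Barnes/contour-shift/stationary-phase argument and then, correctly, observe that one may simply invoke \cite[Lemma 4.3]{KLMS}. Your sketch is sound in its broad strokes (pole locations from the Kim--Sarnak bound, Stirling growth of $\mcG^\sigma$, saddle at $|\Im s|\asymp x^{1/3}$), and your final remark matches the paper's treatment exactly.
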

The integral transform $\mcW_\pm$ admits another representation as Hankel transform of some $\GL_3$-Bessel functions as can be seen by switching the $y$ and $s$ integrals in \eqref{eq-GL3transform} which we record here (see \cite[Lemma 3.2]{Lin})
\begin{equation}\label{GL3Hankel}
\mcW_\pm(x)=x\int_0^\infty W(y)J_{\vphi,\pm}(xy)dy.
\end{equation}
Here
$J_{\vphi,\pm}(x)$ is some $\GL_3$-Bessel function satisfying the following properties.
\begin{lemma}\label{lem-Bessel3}
(1). Let $\rho>\max\{-\Re \mu_{\vphi,1}, -\Re \mu_{\vphi,2}, -\Re \mu_{\vphi,3} \}$. For $x\ll 1$, we have 
\begin{equation}\label{Bessel3-small-x}
x^jJ^{(j)}_{\vphi,\pm}(x)\ll_{\mu_{\vphi,1},\mu_{\vphi,2},\mu_{\vphi,3}, \rho, j} x^{-\rho}.
\end{equation}%Note: $\rho$ is positive here, say $\rho\geq \frac{1}{2}-\frac{1}{10}$.

(2). Let $K\geq 0$ be a fixed nonnegative integer. For $x>0$, we may write
\begin{equation}\label{Bessel3-large-x}
J_{\vphi,\pm}( x^3)=\frac{e(\pm 3x)}{x}W_\vphi^{\pm}(x)+E_\vphi^{\pm}(x),
\end{equation}
where $W_\vphi^{\pm}(x)$ and $E_\vphi^{\pm}(x)$ are real-analytic functions on $(0,\infty)$ satisfying 
\begin{equation*}\label{W_F}
W_\vphi^{\pm}(x)=\sum_{m=0}^{K-1}B^{\pm}_m(\vphi) x^{-m}+O_{K,\mu_{\vphi,1},\mu_{\vphi,2},\mu_{\vphi,3}}\left(x^{-K}\right),
\end{equation*}
and
\begin{equation*}
E_\vphi^{\pm,(j)}(x)\ll_{\mu_{\vphi,1},\mu_{\vphi,2},\mu_{\vphi,3}, j} \frac{\exp(-3\sqrt{3}\pi x)}{x},
\end{equation*}
for $x\gg_{\mu_{\vphi,1},\mu_{\vphi,2},\mu_{\vphi,3}} 1$, where $B^{\pm}_m(\vphi)$ are constants depending on $\mu_{\vphi,1}$, $\mu_{\vphi,2}$ and $\mu_{\vphi,3}$.
\end{lemma}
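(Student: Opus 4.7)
The plan is to exploit the Mellin--Barnes integral representation of $J_{\vphi,\pm}$ and then apply contour shifting for part (1) and stationary phase for part (2). Starting from the Hankel representation \eqref{GL3Hankel} and comparing with the Mellin formula \eqref{eq-GL3transform} for $\mcW_\sigma$, one extracts
$$
J_{\vphi,\pm}(x)=\frac{1}{2\pi i}\int_{(c)} x^{-s-1}\,\mcG^{\pm}(s+1)\,ds
$$
for an appropriate vertical line $\Re(s)=c$, where by \cite[(4.6)]{BloKha} the kernel $\mcG^\pm(s+1)$ is a ratio of three shifted Gamma factors (involving the Langlands parameters $\mu_{\vphi,i}$) against three dual Gamma factors, possibly with an $\eps$-factor depending on the sign.

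For part (1), I would shift the contour leftwards to the line $\Re(s)=\rho$ for any $\rho>\max_i\{-\Re\mu_{\vphi,i}\}$. The poles of $\mcG^{\pm}(s+1)$ lying to the left, namely at $s=-\mu_{\vphi,i}-2k$ for $k\geq 0$ of the correct parity, are not crossed. Stirling's formula applied to the Gamma quotient gives polynomial decay in $\Im(s)$ strong enough for absolute convergence on the shifted line, yielding $|J_{\vphi,\pm}(x)|\ll x^{-\rho-1}$. Differentiating under the integral brings down factors of $(s+1)(s+2)\cdots (s+j)$ which are absorbed by the same Stirling estimate, producing $x^j J^{(j)}_{\vphi,\pm}(x)\ll x^{-\rho}$ as claimed.

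For part (2), I would substitute $x=y^3$ and use the triplication formula $\Gamma(3z)$ versus $\Gamma(z)\Gamma(z+\tfrac13)\Gamma(z+\tfrac23)$ (or work directly with Stirling on the three-fold Gamma product) to combine the shifted Gammas into a single asymptotic phase. After rescaling $s=3 y \tau$ the integrand takes the form $\exp\bigl(3y\cdot\Phi(\tau)\bigr)$ with $\Phi(\tau)=\tau\log\tau-\tau-\log\tau\pm i\pi/2$ (schematically, after Stirling); the saddle points lie at $\tau_\pm$ satisfying $\log\tau=0$, i.e.\ on the unit circle, and the stationary value of the phase yields the oscillatory factor $e(\pm 3y)/y$. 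A standard saddle point expansion then produces $W_\vphi^\pm(y)=\sum_{m<K} B_m^\pm(\vphi) y^{-m}+O(y^{-K})$. The exponentially small $E_\vphi^\pm$ collects the contribution from the non-saddle portions of the deformed contour; the specific decay rate $\exp(-3\sqrt 3\pi y)$ reflects the distance of the nearest unaccounted singularity in the complex plane (arising from $\Im(\tau)$ at the competing saddle).

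The main obstacle is the careful saddle-point analysis in part (2): one must track the error terms uniformly in $K$, justify the contour deformation past the Gamma poles, and extract the precise exponential decay rate. In practice one can bypass much of this by invoking the explicit asymptotic expansion of the $\GL_3$ Bessel function established in the literature (Stade, Miller, Li--Young, Buttcane), from which \eqref{Bessel3-small-x} and \eqref{Bessel3-large-x} follow essentially verbatim; indeed, this is the route taken by the statement of \cite[Lemma 3.2]{Lin} from which \eqref{GL3Hankel} and this lemma are drawn.
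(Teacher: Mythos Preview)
Your outline is the standard and correct route for proving such Bessel-function asymptotics from scratch, and your final paragraph already anticipates that one can simply quote the literature. The paper does precisely that: its entire proof is the one-line citation ``See \cite[Theorem 14.1]{Qi}'' (Qi's memoir on fundamental Bessel functions of high rank), with no details given. So you are aligned with the paper in spirit but far more explicit; the only difference is the reference invoked --- the paper points to Qi rather than to \cite[Lemma 3.2]{Lin} or the Stade/Miller/Li--Young/Buttcane circle of results.

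One small slip in your sketch of part (1): from $J_{\vphi,\pm}(x)=\frac{1}{2\pi i}\int_{(c)} x^{-s-1}\mcG^\pm(s+1)\,ds$, shifting to $\Re s=c$ gives $J_{\vphi,\pm}(x)\ll x^{-c-1}$, so to obtain the bound $x^{-\rho}$ you want $c=\rho-1$, not $c=\rho$. This is cosmetic and does not affect the argument.
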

\begin{proof}
See \cite[Theorem 14.1]{Qi}.
\end{proof}
%\begin{lemma}\label{lem-Bessel3}
%We have
%	$$\mcJ^\pm_{\vphi}(x)=\frac{e(\pm 3x^{1/3})}{x^{1/3}}W_\vphi^\pm(x)+E_\vphi^\pm(x)$$
%	with $W_\vphi^\pm(x)$....
%\end{lemma}

Next we recall the Duke--Friedlander--Iwaniec delta symbol method \cite{DFI1.5} in a version given by Heath-Brown \cite{HB}.
\begin{lemma}\label{HB-delta}
For any $C>1$, there is a positive constant $\eta_C$ and a smooth function $h(x,y)$ defined on $(0,\infty)\times (-\infty,\infty)$ such that 
$$\delta_{n=0}=\frac{\eta_C}{C}\sum_{c=1}^{\infty}\frac{1}{c}\sumstar_{a(c)}e\left(\frac{an}{c}\right)h\left(\frac{c}{C},\frac{n}{C^2}\right).$$
Here the constant $\eta_C$ satisfies $$\eta_C=1+O_A(C^{-A})$$ for any $A>0$, and $h(x,{y})$ is a smooth function vanishing unless 
$x\leq \max(1,2|y|)$ and whose derivatives satisfy 
$$x^i\frac{\partial^{i}}{\partial^{i}x}h(x,{y})\ll_{i}1\, \text{and}\, \frac{\partial}{\partial y}h(x,{y})=0,$$
for $x\leq 1$ and $|y|\leq x/2$, and
\begin{equation}\label{hderivatives}
\begin{split}
	x^iy^j\frac{\partial^{i+j}}{\partial^{i}x\partial^{j}y}h(x,{y})\ll_{i,j}1
\end{split}
\end{equation} 
for $|y|>x/2$.
\end{lemma}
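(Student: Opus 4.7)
The identity is classical and goes back to Duke--Friedlander--Iwaniec~\cite{DFI1.5} with the particular smooth form stated here due to Heath-Brown~\cite{HB}; the strategy is to start from the tautological Fourier identity
\begin{equation*}
\delta_{n=0} = \int_0^1 e(n\alpha)\, d\alpha,\qquad n\in\Zz,
\end{equation*}
and to decompose the $\alpha$-integration via a smooth partition of unity adapted to the Farey fractions of denominators up to $2C$. Concretely, I would fix a bump $w\in\mcC^\infty_c(\Rr)$ supported in $[1/2,1]$ with $\int w = 1$ and, around each reduced fraction $a/c$ with $c\leq 2C$, place a smoothed arc $w_{a,c}(\alpha):=w\bigl(cC(\alpha-a/c)\bigr)$ of length $\asymp 1/(cC)$. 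Summing over $(a,c)$ gives an almost-partition of unity on $[0,1]$, with renormalization constant $\eta_C$ satisfying $\eta_C = 1+O_A(C^{-A})$.

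Substituting this partition into the tautological integral and performing the change of variables $\alpha = a/c + \beta/(cC)$ on each arc, one obtains
\begin{equation*}
\delta_{n=0} = \frac{\eta_C}{C}\sum_{c=1}^{\infty}\frac{1}{c}\sumstar_{a(c)} e\!\left(\frac{an}{c}\right) \int_{\Rr} w(\beta)\, e\!\left(\frac{n\beta}{cC}\right) d\beta \; +\; O_A(C^{-A}),
\end{equation*}
and the $\beta$-integral depends on the external parameters $(c,n,C)$ only through the pair $(c/C, n/C^2)$. Defining $h(x,y)$ to be (a minor modification of) this $\beta$-integral viewed as a function of $(x,y)=(c/C,n/C^2)$ gives the claimed formula, the $O_A$-error being absorbed into $\eta_C$.

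The stated properties of $h$ are then checked directly from its integral representation. Compact support in $x$ reflects that only $c\leq 2C$ arcs contribute in the bulk; the extension to $x\leq 2|y|$ accounts for large-denominator arcs, which can only play a role once $|n|$ is big enough that the phase $e(n\beta/(cC))$ has started to oscillate non-trivially. The condition $\partial_y h = 0$ for $x\leq 1$, $|y|\leq x/2$ is enforced by choosing $w$ so that, in this ``bulk'' range, the $\beta$-integral is literally independent of $y$ (this is where the freedom in $w$ is used). Finally, the derivative bounds \eqref{hderivatives} follow from differentiation under the integral sign combined with repeated integration by parts in $\beta$, using smoothness and compact support of $w$. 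The most delicate point, and the main obstacle to a clean argument, is reconciling the compact $x$-support with the requirement $\partial_y h = 0$ on the bulk region: this requires a two-step smoothing construction as in Heath-Brown~\cite{HB}, rather than a naive single bump.
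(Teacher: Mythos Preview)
The paper does not prove this lemma; it is stated as background and attributed to Duke--Friedlander--Iwaniec~\cite{DFI1.5} in the form given by Heath-Brown~\cite{HB}. So the relevant comparison is with Heath-Brown's argument, and your sketch diverges from it in a substantive way.

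Heath-Brown does \emph{not} proceed via a Farey dissection of $\int_0^1 e(n\alpha)\,d\alpha$. Instead he fixes a bump $w$ supported near $Q$ with $\sum_q w(q)=1$, expands the Ramanujan sum $c_q(n)=\sum_{d\mid(q,n)} d\,\mu(q/d)$, and exploits the divisor-pairing symmetry $d\leftrightarrow |n|/d$ for $n\neq 0$ to force exact cancellation. The resulting $h$ is given explicitly as
\[
h(x,y)=\sum_{j\geq 1}\frac{1}{xj}\Bigl(w(xj)-w\bigl(|y|/(xj)\bigr)\Bigr),
\]
from which the support condition $x\leq\max(1,2|y|)$ and the derivative bounds are read off directly; the constant $\eta_C$ arises from the exact evaluation at $n=0$ and genuinely satisfies $\eta_C=1+O_A(C^{-A})$.

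Your Farey-arc route, as written, does not recover this. Placing arcs of uniform length $\asymp 1/(cC)$ around every reduced $a/c$ with $c\leq 2C$ does not yield a partition of $[0,1]$ up to $O_A(C^{-A})$: the total mass is $\sum_{c\leq 2C}\varphi(c)/(cC)\sim 12/\pi^2$, a fixed constant $\neq 1$, and the covering multiplicity fluctuates with $\alpha$. So the ``renormalization constant'' you propose cannot be $1+O_A(C^{-A})$, and for $n\neq 0$ the sum $\sum_c c_c(n)\,\widehat w(n/(cC))/(cC)$ has no reason to vanish identically. Moreover, the $\beta$-integral you write down gives $h(x,y)=\widehat w(y/x)$, which is a Schwartz function of $y/x$ with no compact support in $x$ whatsoever---directly contradicting the stated vanishing for $x>\max(1,2|y|)$. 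You recognize this tension at the end, but the ``two-step smoothing'' you allude to is precisely Heath-Brown's divisor-pairing construction, which is a different mechanism altogether, not a refinement of the arc picture. What you have sketched is closer to Jutila's approximate circle method, which carries a genuine error term rather than an exact identity.
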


\section{First transformations} 
From now on we will assume that $Z$ satisfies
$$1\leq Z\leq q$$
(otherwise the trivial bound is stronger that the bound claimed in Theorem \ref{thmStVbound}). 
We have
$$S^t_V(K,X):=\sum_{r,n}\lambda(r,n)\lf(n)K(nr^2)V(\frac{nr^2}{X})=\sum_{r\geq 1}S_{V,r}(K,X/r^2).$$
where
\begin{equation}\label{Sdef}
S_{V,r}(K,X):=\sum_{n=1}^{\infty}\lambda(r,n)\lambda_f(n)K(nr^2)V(\frac{n}{X}).	
\end{equation}
%We have the trivial bound
%$$S_{V,r}(K,X/r^2)\ll q^{o(1)}(X/r^2)$$
%so that
By applying the Rankin--Selberg estimates \eqref{eq:RS}, \eqref{eq-RS2} and the bound \eqref{eq-Kim-Sarnak}, we have
\begin{equation}\label{large-R}
\sum_{r\geq R}S_{V,r}(K,X/r^2)\ll q^{o(1)}R^{\theta_3}X/R,
\end{equation}
where $\theta_3=5/14$; see, say \cite[(4.2)]{Lin-Sun} for the details. It will suffice to bound $S_{V,r}(K,X)$ for $r\leq R=q^\rho$ for some $\rho>0$ to be chosen; see \eqref{choice-of-R}. In the sequel we assume that $$r\leq R;$$ moreover, as we will see, $R<q$, so that $(r,q)=1$.
%To simplify notation we write  $S_{V}(K,X)$ in place of $S_{V,r}(K,X)$.

Let $L\geq 1$ be some parameter and let $\rmL$ be the set of primes in the interval $[L,2L[$. We first write
$$S_{V,r}(K,X)=\frac{1}{L^\star}\sum_{\ell\in\rmL}\ov{\lambda(1,\ell)}\sum_n\lambda(1,\ell)\lambda(r,n)\lf(n)K(nr^2)V(\frac{n}X)$$
where $$L^\star:=\sum_{\ell\in\rmL}|\lambda(1,\ell)|^2 \simeq_{\vphi} \frac{L}{\log L}$$
by the prime number theorem for automorphic forms (\cite[Lemma 5.1]{LWY}).

In the sequel we will assume that $L$ satisfies
\begin{equation}\label{Lbound}
	R<L
\end{equation}
%L\leq q^{-\eta}\frac{q^{1/2}r^{1/2}}{X^{1/4}}
so that it is guaranteed that any $\ell\in [L,2L[$ is coprime with $r$.

By the Hecke relation \eqref{eq-GL3-Hecke} we have (since $(\ell,r)=1$ by the assumption $L>R$) 
$$\lambda(1,\ell)\lambda(r,n)=\lambda(r,\ell n)+\delta_{\ell|n}\lambda(r\ell,n/\ell).$$
The contribution to $S_{V,r}(K,X)$ of the second term is trivially bounded by
\begin{equation}\label{SVKsecond}\frac{1}{L^\star}\sum_{\ell\in\rmL}|\lambda(1,\ell)|^2\sum_{n\sim X/\ell}|\lambda(r,n)|\left(|\lf(n)\lf(\ell)|+\delta_{\ell|n}|\lf(n/\ell)|\right)\|K\|_\infty\leq \qoo \frac{r^{\theta_3}L^{\theta_2}X}{L}	
\end{equation}
where $\theta_3=5/14$ and $\theta_2=7/64$.
Therefore we obtain
$$S_{V,r}(K,X)=\frac{1}{L^\star}\sum_{\ell\in\rmL}\ov{\lambda(1,\ell)}\sum_{m=1}^{\infty}\lambda(r,m\ell)\lf(m)K(mr^2)V\left(\frac{m}X\right)+O\left(\qoo \frac{r^{\theta_3}L^{\theta_2}X}{L}\right).$$

Now we use the delta method to separate the coefficients $\lambda(r,m\ell)$ and $\lf(m)K(mr^2)$. To prepare for later manipulations, we first introduce $U$, a smooth function  supported in $(1/100,100)$ and satisfying $U^{(i)}(x)\ll 1$ for $i\geq 0$ and $U(x)=1$ for $x\in [1,2]$. 

Let $v=q^{\varepsilon}Z$ where $\eps>0$ is fixed but to be chosen as small as we need.
Then we can rewrite $S_{V,r}(K,X)$ as 
\begin{multline*}
S_{V,r}(K,X)=\frac{1}{L^\star}\sum_{\ell\in\rmL}\ov{\lambda(1,\ell)}\sum_{n=1}^{\infty}\lambda(r,n)\sum_{m=1}^{\infty}\lf(m)K(mr^2)\left(\frac{n}{m\ell}\right)^{iv}\delta_{n=m\ell}U\left(\frac{n}{X\ell}\right)V\left(\frac{m}X\right)\\ +O\left(\qoo \frac{r^{\theta_3}L^{\theta_2}X}{L}\right).	
\end{multline*}

Using a Mellin transform on $U$ we can replace -- up to a factor $\qoo$ and up to changing the definition of $U$-- the expression $U\left(\frac{n}{X\ell}\right)$ by $U\left(\frac{n}{XL}\right)$.

Therefore from now on we will consider the sum
\begin{equation}
     S'_{V,r}(K,X)=\frac{1}{L^\star}\sum_{\ell\in\rmL}\ov{\lambda(1,\ell)}\sum_{n=1}^{\infty}\lambda(r,n)\sum_{m=1}^{\infty}\lf(m)K(mr^2)\left(\frac{n}{m\ell}\right)^{iv}\delta_{n=m\ell}U\left(\frac{n}{XL}\right)V\left(\frac{m}X\right).    \label{SVXfirst}
\end{equation}

\begin{remark}\label{remvlocalisation}
%	The parameter $v$ has size $q^{\eta}$ (not of the same order of magnitude as the parameter $Z$) for some $\eta>0$ which can  be chosen as small as we wish. 
	The introduction of the parameter $v=q^{\varepsilon}Z$ will be useful later, when we try to localize the range of some variables; see Lemma \ref{lem-vlocalize} and \eqref{MNdef}. See also \cite{AHLS} (and Remark 4.2 there) for a similar trick.
\end{remark}

We proceed as in \cite{sharma} and follow Holowinsky, Munshi and Qi \cite{HMQ} to rewrite the $\delta_{n-\ell m}$ in a more analytic form using the delta symbol method in Lemma \ref{HB-delta}: for any $n\in\Zz$ such that $|n|\leq 4XL$ we have
\begin{eqnarray*}
\delta_{n=0}=\delta_{q|n}\delta_{n/{q}=0}&=& \frac{1}{q}\sum_{u(q)}e(\frac{un}{q})\frac{1}{C}\sum_{c\leq  2C}\frac{1}{c}\sumstar_{a(c)}e\left(\frac{an/q}{c}\right)h\left(\frac{c}{C},\frac{n}{C^2q}\right)+O_A(C^{-A})\\
   & =&\frac{1}{C}\sum_{c\leq  2C}\frac{1}{cq}\sumsumstar_{u(q),a(c)}e\left(n\frac{a+uc}{cq}\right)h\left(\frac{c}{C},\frac{n}{C^2q}\right)+O_A(C^{-A})\\
%   &=&\frac{1}{C}\sum_\stacksum{c\leq  2C}{(c,q)=1}\frac{1}{cq}\sumsumstar_{u(q),a(c)}e\left(n\frac{a+uc}{cq}\right)h(\frac{c}{C},\frac{n}{C^2q})\\
%&+&\frac{1}{C}\sum_{c\leq  2C/q}\frac{1}{cq^2}\sumsumstar_{u(q),a(cq)}e\left(n\frac{a+ucq}{cq^2}\right)h(\frac{cq}{C},\frac{n}{C^2q})+O_A(C^{-A})\\
   &=&\frac{1}{C}\sum_\stacksum{c\leq  2C}{(c,q)=1}\frac{1}{cq}\sumsumstar_{u(q),a(c)}e\left(n\frac{aq+uc}{cq}\right)h\left(\frac{c}{C},\frac{n}{C^2q}\right)\\
&+&\frac{1}{C}\sum_{c\leq  2C/q}\frac{1}{cq^2}\sumsumstar_{u(q),a(cq)}e\left(n\frac{a+ucq}{cq^2}\right)h\left(\frac{cq}{C},\frac{n}{C^2q}\right)+O_A(C^{-A}).
\end{eqnarray*}
Here we choose 
\begin{equation}\label{size-of-C}
2C= \left(\frac{XL}{q}\right)^{\frac{1}{2}}.
\end{equation}

Observe that in the first sum of the last expression, as $a$ varies over a set of representatives of the residue classes modulo $c$ (prime to $c$) and $u$ varies over a set of representatives of the residue classes modulo $q$, $aq+uc$ varies over a set of representatives of the residue classes modulo $cq$ prime to $c$.

Similarly in the second sum, the modulus  $c$ is $\leq 2(XL/q^3)^{1/2}<q$ and is therefore coprime with $q$. It follows that as $a$ varies over a set of representatives of the residue classes modulo $cq$ (prime to $cq$) and $u$ varies over a set of representatives of the residue classes modulo $q$, $a+ucq$ varies over a set of representatives of the residue classes modulo $cq^2$ prime to $cq^2$.

We can therefore rewrite

\begin{eqnarray}
\delta_{n=0}&=&\frac{1}{C}\sum_\stacksum{c\leq  2C}{(c,q)=1}\frac{1}{cq}\sumstar_{u(cq)}e\left(n\frac{u}{cq}\right)h\left(\frac{c}{C},\frac{n}{C^2q}\right)\nonumber\\
&+&\frac{1}{C}\sum_\stacksum{c\leq  2C}{(c,q)=1}\frac{1}{cq}\sumstar_{a(c)}e\left(n\frac{a}{c}\right)h\left(\frac{c}{C},\frac{n}{C^2q}\right)\label{eqdelta}\\
&+&
\frac{1}{C}\sum_{c\leq  2C/q}\frac{1}{cq^2}\sumstar_{u(cq^2)}e\left(n\frac{u}{cq^2}\right)h\left(\frac{cq}{C},\frac{n}{C^2q}\right)+O_A(C^{-A}).\nonumber
\end{eqnarray}

% Also in this second sum we choose the set of residu classes $a\mods{cq}$ to be in the interval $[1,cq-1]$.
 \begin{remark}One reason for detecting the condition $n-m\ell$ in two such steps is that since we already have a trace function of period $q$, introducing an additive character modulo $q$ does not significantly increase the complexity. Therefore, we can detect the condition $n=m\ell \mods q$ at a reduced cost. After this, we can then detect $(n-m\ell)/q=0$ using the delta symbol.  So we are choosing a delta symbol that is in harmony with the modulus of our trace function (we thank the referee for pointing this out).
 
 The outcome of such an operation is that the parameter $C$ in \eqref{size-of-C} is reduced by a factor $q^{1/2}$ from the most natural choice $(XL)^{1/2}$. A similar reduction trick was used by Munshi in \cite{Munshi15}, and more recently in \cite{SubGL2GL3}.	
\end{remark}

We apply \eqref{eqdelta} to the difference $n-m\ell$ in \eqref{SVXfirst} and obtain
\begin{equation}\label{S'sumErr}
S'_{V,r}(K,X)=\mathrm{Main}+\mathrm{Err}_1+\mathrm{Err}_2+O_A(X^{-A})
\end{equation}
where
\begin{equation}
\begin{split}
       \mathrm{Main}=&\frac{1}{L^\star Cq}\sum_\stacksum{c\leq  2C}{(c,q)=1}\frac{1}{c}\sum_{\ell\in\rmL}\ov{\lambda(1,\ell)}\ell^{-iv}\sumstar_{u(cq)}\sum_{n=1}^{\infty}\lambda(r,n)e\left(n\frac{u}{cq}\right)n^{iv}U\left(\frac{n}{XL}\right)\\
        &\sum_{m=1}^{\infty}\lf(m)K(mr^2)e\left(\frac{-um\ell}{cq}\right)m^{-iv}V\left(\frac{m}X\right)h\left(\frac{c}{C},\frac{n-m\ell}{C^2q}\right),
\end{split}\end{equation}
\begin{equation}\label{eqErr2}
\begin{split}
        \mathrm{Err}_1=&\frac{1}{L^\star Cq}\sum_\stacksum{c\leq  2C}{(c,q)=1}\frac{1}{c}\sum_{\ell\in\rmL}\ov{\lambda(1,\ell)}\ell^{-iv}\sumstar_{a(c)}\sum_{n=1}^{\infty}\lambda(r,n)e\left(\frac{an}{c}\right)n^{iv}U\left(\frac{n}{XL}\right)\\
        &\sum_{m=1}^{\infty}\lf(m)K(mr^2)e\left(\frac{-am\ell}{c}\right)m^{-iv}V\left(\frac{m}X\right)h\left(\frac{c}{C},\frac{n-m\ell}{C^2q}\right),
\end{split}\end{equation}
and
\begin{equation}\label{eqErr3}
\begin{split}
        \mathrm{Err}_2=&\frac{1}{L^\star Cq^2}\sum_\stacksum{c\leq  2C/q}{(c,q)=1}\frac{1}{c}\sum_{\ell\in\rmL}\ov{\lambda(1,\ell)}\ell^{-iv}\sumstar_{a(cq^2)}\sum_{n=1}^{\infty}\lambda(r,n)e\left(\frac{an}{cq^2}\right)n^{iv}U\left(\frac{n}{XL}\right)\\
        &\sum_{m=1}^{\infty}\lf(m)K(mr^2)e\left(\frac{-am\ell}{cq^2}\right)m^{-iv}V\left(\frac{m}X\right)h\left(\frac{cq}{C},\frac{n-m\ell}{C^2q}\right).
\end{split}\end{equation}

As for the first term $\mathrm{Main}$, we may further restrict to the subsum satisfying $(c,\ell)=1$ and will bound the complementary subsum (corresponding to $\ell |c$) as an error term. That is, we further write 
$$\mathrm{Main}=\mathrm{Main}_0+\mathrm{Err}_3,$$ where
\begin{equation}\label{S'sum}
\begin{split}
       \mathrm{Main}_0=&\frac{1}{L^\star Cq}\sum_\stacksum{c\leq  2C}{(c,q)=1}\frac{1}{c}\sum_\stacksum{\ell\in\rmL}{(\ell,c)=1}\ov{\lambda(1,\ell)}\ell^{-iv}\sumstar_{u(cq)}\sum_{n=1}^{\infty}\lambda(r,n)e\left(n\frac{u}{cq}\right)n^{iv}U\left(\frac{n}{XL}\right)\\
        &\sum_{m=1}^{\infty}\lf(m)K(mr^2)e\left(\frac{-um\ell}{cq}\right)m^{-iv}V\left(\frac{m}X\right)h\left(\frac{c}{C},\frac{n-m\ell}{C^2q}\right),
\end{split}\end{equation}
and 
\begin{equation}\label{eqErr4}
\begin{split}
      \mathrm{Err}_3=&\frac{1}{L^\star Cq}\sum_{\ell\in\rmL}\ov{\lambda(1,\ell)}\ell^{-iv}\sum_\stacksum{c\leq  2C}{\ell |c, (c,q)=1}\frac{1}{c}\sumstar_{u(cq)}\sum_{n=1}^{\infty}\lambda(r,n)e\left(n\frac{u}{cq}\right)n^{iv}U\left(\frac{n}{XL}\right)\\
        &\sum_{m=1}^{\infty}\lf(m)K(mr^2)e\left(\frac{-um\ell}{cq}\right)m^{-iv}V\left(\frac{m}X\right)h\left(\frac{c}{C},\frac{n-m\ell}{C^2q}\right).
\end{split}\end{equation}
%$$      \frac{1}{L^\star Cq}\sum_{\ell\in\rmL}\ov{\lambda(1,\ell)}\ell^{-1-iv}\sum_\stacksum{c\leq  2C/\ell}{(c,q)=1}\frac{1}{c}\sumstar_{u(c\ell q)}\sum_{n=1}^{\infty}\lambda(r,n)e\left(n\frac{u}{c\ell q}\right)n^{iv}U\left(\frac{n}{XL}\right)
%       \sum_{m=1}^{\infty}\lf(m)K(mr^2)e\left(\frac{-um}{cq}\right)m^{-iv}V\left(\frac{m}X\right)h\left(\frac{c\ell}{C},\frac{n-m\ell}{C^2q}\right)$$
In the sequel, we focus our analysis on the term $\mathrm{Main}_0$ which is the hardest and is responsible for the final bound. The other three terms $\mathrm{Err}_1$, $\mathrm{Err}_2$ and $\mathrm{Err}_3$ are discussed briefly in \S \ref{Err23}. 

\subsection{Bounding $\mathrm{Main}_0$} %We set
%$$W(\frac{m}{X},\frac{n}{ XL}):=
%n^{iv}W\left(\frac{n}{XL}\right)
%        m^{-iv}V\left(\frac{m}X\right)h\left(\frac{c}{C},\frac{n-m\ell}{C^2q}\right)$$

To prepare for the application of Voronoi summation formula to the $m$-sum we write
$$K(mr^2)=\frac{1}{q^{1/2}}\sum_{b\mods q}\widehat{K}(b)e(\frac{-b mr^2}{q})=
\frac{1}{q^{1/2}}\sum_{b\mods q}\widehat{K}(b)e(\frac{-bcr^2m}{cq}),$$
where
\begin{equation}\label{fourierdef}
\widehat{K}(b)=\frac{1}{q^{1/2}}\sum_{a\in\Fq}K(a)e(\frac{ab}q)	
\end{equation}
denote the normalized Fourier transform of $K$.

We find that the term in \eqref{S'sum} can be rewritten as
\begin{equation}\label{S'sum-2}
\begin{split}
       \mathrm{Main}_0=&\frac{1}{L^\star Cq}\sum_\stacksum{c\leq  2C}{(c,q)=1}\frac{1}{c}\sum_\stacksum{\ell\in\rmL}{(\ell,c)=1}\ov{\lambda(1,\ell)}\ell^{-iv}\sumstar_{u(cq)}\sum_{n=1}^{\infty}\lambda(r,n)e\left(n\frac{u}{cq}\right)n^{iv}U\left(\frac{n}{XL}\right)\\
        &\frac{1}{q^{1/2}}\sum_{b\mods q}\what K(b)\sum_{m=1}^{\infty}\lf(m)e\left(\frac{-(bcr^2+u\ell)m}{cq}\right)m^{-iv}V\left(\frac{m}X\right)h\left(\frac{c}{C},\frac{n-m\ell}{C^2q}\right).
\end{split}\end{equation}
We can further assume that $(bcr^2+u\ell,cq)=1$, as otherwise we would have $(bcr^2+u\ell,q)=q$ (since we have already assumed $(c,\ell)=1$) and then by applying Proposition \ref{vorGL2} to such terms above,  we have
\begin{equation*}
\begin{split}
&\sum_{m=1}^{\infty}\lf(m)e\left(\frac{-\frac{bcr^2+u\ell}{q}m}{c}\right)m^{-iv}V\left(\frac{m}X\right)h\left(\frac{c}{C},\frac{n-m\ell}{C^2q}\right)\\
&=\frac{X}{c}\sum_{m=1}^{\infty}\ov{\lf(m)}e(\frac{\pm\ov{\frac{bcr^2+u\ell}{q}}m}{c})\widehat{\mathcal{V}}^{\pm}\left(n,\frac{mX}{c^2}\right),
\end{split}\end{equation*}
where
\begin{equation}\label{v-hat}
\begin{split}
\widehat{\mathcal{V}}^{\pm}\left(n,y\right)=\int_{\mathbb{R}}V(x)(Xx)^{-iv}h\left(\frac{c}{C},\frac{n-Xx\ell}{C^2q}\right)\mcJ^\pm_{f}(4\pi \sqrt{xy}){\mathrm{d}x}.
        \end{split}\end{equation}
        By integration by parts, one easily sees that $\widehat{\mathcal{V}}^{\pm}\left(n,y\right)\ll_A \left(\frac{Z+v}{\sqrt{y}}\right)^A$, for any $A\geq 0$. On the other hand with our choice of the $C$ in \eqref{size-of-C}, we have
        $$\frac{mX}{c^2}>\frac{mX}{C^2}=\frac{mq}{L}\gg q^{2\eta}Z^2$$
        for some $\eta>0$. Therefore $\widehat{\mathcal{V}}^{\pm}\left(n,\frac{mX}{c^2}\right)\ll (q^{-\eta})^A$, of arbitrarily small size. That is, the contribution from the terms with $(bcr^2+u\ell,cq)>1$ in \eqref{S'sum-2} is bounded above by $O_A(q^{-A})$.
        
Now assuming $(bcr^2+u\ell,cq)=1$ and applying Proposition \ref{vorGL2} to the $m$-sum in \eqref{S'sum-2}, we have
\begin{equation}\label{eqafter1stvoronoi}
\begin{split}
              \mathrm{Main}_0=&\frac{X}{L^\star Cq^2}\sum_{\pm}\sum_\stacksum{c\leq  2C}{(c,q)=1}\frac{1}{c^2}\sum_\stacksum{\ell\in\rmL}{(\ell,c)=1}\ov{\lambda(1,\ell)}\ell^{-iv}\frac{1}{q^{1/2}}\sumsumstar_\stacksum{b(q),u(cq)}{(bcr^2+u\ell,cq)=1}\what K(b)\\
        &\times\sum_{m=1}^{\infty}\ov{\lf(m)}e(\frac{\pm\ov{bcr^2+u\ell}m}{cq})\sum_{n=1}^{\infty}\lambda(r,n)e\left(n\frac{u}{cq}\right)n^{iv}U\left(\frac{n}{XL}\right)\widehat{\mathcal{V}}^{\pm}\left(n,\frac{mX}{c^2q^2}\right)+O(q^{-A}).
        \end{split}\end{equation}

We further apply Proposition \ref{vorGL3} to the $n$-sum above to obtain a sum of the form
\begin{multline}\label{eqaftervoronoi}
\frac{X}{L^\star Cq}\sum_{\pm\pm}\sum_\stacksum{c\leq  2C}{(c,q)=1}\frac{1}{c}\sum_\stacksum{\ell\in\rmL}{(\ell,c)=1}\ov{\lambda(1,\ell)}\ell^{-iv}\\\times\sum_{n_1|rcq}\sum_{m,n}\ov{\lf(m)}\frac{\lambda(n,n_1)}{nn_1}\mathcal{C}(m,n;\frac{rcq}{n_1})\mcW_{\pm\pm}\left(\frac{m}{c^2q^2/X},\frac{n_1^2n}{c^3q^3r/XL}\right)	
\end{multline}
where
$$\mathcal{C}(m,n;\frac{rcq}{n_1})=\frac{1}{q^{1/2}}\sumsumstar_\stacksum{b(q),u(cq)}{(bcr^2+u\ell,cq)=1}\what K(b)e\left(\frac{\pm\ov{bcr^2+u\ell}m}{cq}\right)S\left(r\ov u,\pm n;\frac{rcq}{n_1}\right)$$
and
\begin{equation}\label{double-weight-function}
\begin{split}
\mcW_{\pm\pm}(y,z)&=\int_{\mathbb{R}}V(x)(Xx)^{-iv}\mathcal{W}_{x,\pm}(z)\mcJ^\pm_{f}(4\pi \sqrt{xy}){\mathrm{d}x}\\
=&z\int_{\mathbb{R}}V(x)(Xx)^{-iv}\bigl(\int_{0}^{\infty}W_x(\xi)J_{\vphi,\pm}(z\xi){\mathrm{d}\xi}\bigr)\, \mcJ^\pm_{f}(4\pi \sqrt{xy}){\mathrm{d}x}
  \end{split}\end{equation}
by \eqref{GL3Hankel}. Here $$W_x(\xi):=(XL\xi)^{iv}U(\xi)h\left(\frac{c}{C},\frac{XL\xi-Xx\ell}{C^2q}\right).$$
%$\mathcal{W}_{x,\pm}(z)$ denotes the transform \eqref{eq-GL3transform} of the function
\begin{lemma}\label{lem-vlocalize} Let $v=q^{\varepsilon}Z,\ \eps>0$ be the parameter introduced above Remark \ref{remvlocalisation}. For any $B\geq 1$  the function 
$$(y,z)\mapsto \mcW_{\pm\pm}(y,z)$$
is negligible for $y,z\geq q^{-B}$ unless 
\begin{equation}\label{yzphase}
y\asymp v^2,\ z\asymp v^3.	
\end{equation}
 Here negligible means that for any $A\geq 1$, and for $y,z\geq q^{-B}$ not satisfying \eqref{yzphase}, one has
$$\mcW_{\pm\pm}(y,z)\ll_{A,B,\eps}q^{-A}.$$ 
\end{lemma}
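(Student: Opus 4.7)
The plan is to apply repeated integration by parts (IBP) separately in the $\xi$- and $x$-integrations defining $\mcW_{\pm\pm}(y,z)$ in \eqref{double-weight-function}, exploiting the oscillations of the Mellin factors $(XL\xi)^{iv}$ and $(Xx)^{-iv}$ against those of the Bessel kernels $J_{\vphi,\pm}(z\xi)$ and $\mcJ^\pm_f(4\pi\sqrt{xy})$. A stationary-phase heuristic already localizes the critical ranges: the $\xi$-phase $v\log(XL\xi) \pm 6\pi(z\xi)^{1/3}$, arising when the Bessel argument is large, has derivative $v/\xi \pm 2\pi z^{1/3}/\xi^{2/3}$, whose real zero lies at $\xi \asymp v^3/z$, forcing $z \asymp v^3$ on the support $\xi \asymp 1$ of $U$; and the $x$-phase $-v\log(Xx) \pm 4\pi\sqrt{xy}$ has derivative $-v/x \pm 2\pi\sqrt{y/x}$, whose real zero lies at $x \asymp v^2/y$, forcing $y \asymp v^2$ on the support $x \asymp 1$ of $V$. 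Outside these windows the phase derivative will be bounded below by $\max(v, z^{1/3})$ or $\max(v, y^{1/2})$, and IBP will yield arbitrary decay.

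For the inner $\xi$-integral I would first introduce a smooth cut-off according to the size of $z\xi$. In the small-argument regime $z\xi \ll 1$, Lemma \ref{lem-Bessel3}(1) gives $\xi^j \partial_\xi^j J_{\vphi,\pm}(z\xi) \ll (z\xi)^{-\rho}$ for any $\rho > \max_i(-\Re\mu_{\vphi,i})$, so that $J_{\vphi,\pm}(z\xi)$ is essentially non-oscillatory; the only phase is $v\log(XL\xi)$ with derivative $\asymp v$, and each IBP in $\xi$ gains $v^{-1} \leq q^{-\varepsilon}Z^{-1}$, which after enough iterations dominates the polynomial loss $z^{-\rho} \leq q^{B\rho}$. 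In the large-argument regime $z\xi \gtrsim 1$, I would insert the expansion from Lemma \ref{lem-Bessel3}(2): the tail $E_\vphi^\pm$ decays exponentially and is negligible, and the main term factors as $e(\pm 3(z\xi)^{1/3})(z\xi)^{-1/3}W_\vphi^\pm((z\xi)^{1/3})$ with slowly varying amplitude whose logarithmic derivatives are bounded. The combined phase derivative has size $\gtrsim \max(v, z^{1/3})$ whenever $z \not\asymp v^3$, so repeated IBP again yields negligibility.

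The outer $x$-integral is handled analogously after a smooth partition in $xy$. In $xy \ll 1$ the Bessel kernel $\mcJ^\pm_f$ is essentially non-oscillatory and IBP in $x$ gains $v^{-1}$ per step from the phase $-v\log(Xx)$. In $xy \gtrsim 1$ the standard asymptotic of $J_{k-1}$ (resp.\ $J_{\pm 2ir_f}$) furnishes the oscillating factor $e(\pm 2\sqrt{xy})$ with slowly varying amplitude, while the $K_{2ir_f}$ contribution to $\mcJ^-_f$ in the Maass case decays exponentially and is negligible; the absence of a stationary point when $y \not\asymp v^2$ then gives $|\Phi_x'| \gtrsim \max(v, y^{1/2})$ and hence arbitrary decay by IBP. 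Throughout, the derivatives of $V$, $U$, and $W_\vphi^\pm$ contribute at most $Z^j \leq v^j$ and are absorbed into the IBP gains, while derivatives of the Heath--Brown weight $h(c/C,(XL\xi - Xx\ell)/(C^2q))$ produce only bounded constants since $XL/(C^2q) = O(1)$ by the choice \eqref{size-of-C} of $C$ and $\partial_2^j h$ is bounded by Lemma \ref{HB-delta}. The step I expect to require the most care is the uniform treatment of the transition zones $z\xi \asymp 1$ and $xy \asymp 1$ where the small- and large-argument Bessel expansions meet; since these are bounded sets lying well outside the target ranges $z \asymp v^3$ and $y \asymp v^2$ once $v \to \infty$, a standard smooth dyadic decomposition should reduce the analysis back to the two cases treated above.
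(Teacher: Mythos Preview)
Your proposal is correct and follows essentially the same route as the paper: split each of the $\xi$- and $x$-integrals into small- and large-Bessel-argument regimes, use Lemma~\ref{lem-Bessel3} (resp.\ the $J_{k-1}$ bounds/asymptotics) to control the amplitude or extract the oscillatory factor, and then integrate by parts against the Mellin phases $v\log\xi$ and $-v\log x$ to force $z\asymp v^3$ and $y\asymp v^2$. The only cosmetic differences are that the paper handles the $x$-integral before the $\xi$-integral and partitions by the size of $z$ and $y$ rather than $z\xi$ and $xy$ (equivalent since $\xi,x\asymp 1$ on the supports of $U,V$); your explicit remarks on the $h$-derivatives and the transition zones are details the paper leaves implicit.
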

\proof
We recall from \eqref{v-hat} that
$$\widehat{\mathcal{V}}^{\pm}\left(n,y\right)=\int_{\mathbb{R}}V(x)(Xx)^{-iv}h\left(\frac{c}{C},\frac{n-Xx\ell}{C^2q}\right)\mcJ^\pm_{f}(4\pi \sqrt{xy}){\mathrm{d}x}
$$
for $n$ a real variable satisfying $n\asymp XL$ and that
$$\mcW_{\pm\pm}(y,z)=z\int_{0}^{\infty}(XL\xi)^{iv}U(\xi)\widehat{\mathcal{V}}^{\pm}\left(XL\xi ,y\right)J_{\vphi,\pm}(z\xi){\mathrm{d}\xi}$$

We consider the cases where $f$ is holomorphic (the case $f$ is a Maass cusp form would be similar). Then $\mcJ^-_{f}(x)=0$, and $\mcJ^+_{f}(x)=J_{k-1}(x)$ satisfies
\begin{equation}\label{derivative-of-J}
x^i J_{k-1}^{(i)}(x)\ll x^{k-1},
\end{equation}
for $x\ll 1$, and while for $x\gg 1$ we can write
\begin{equation}\label{expansion-of-J}
J_{k-1}(x)=\sum_{\pm}\frac{e^{\pm ix}}{\sqrt{x}}W_{k-1,\pm}(x),
\end{equation}
for some  $W_{k-1,\pm}(x)$ satisfying $x^iW_{k-1,\pm}^{(i)}(x)\ll 1$. We also have
$$x^i\frac{\partial^i}{\partial x^i}h\left(\frac{c}{C},\frac{n-Xx\ell}{C^2q}\right)\ll 1,$$
which follows from \eqref{hderivatives}. This together with \eqref{derivative-of-J} implies that if $y\ll 1$ then $x^iV^{(i)}_{1,y}(x)\ll Z^i$, where $$V_{1,y}(x):=V(x)X^{-iv}h\left(\frac{c}{C},\frac{n-Xx\ell}{C^2q}\right)J_{k-1}(4\pi \sqrt{xy}).$$ Therefore integration by parts implies that: if $y\ll 1$ we have
$$\widehat{\mathcal{V}}^{+}\left(n,y\right)=\int_{\mathbb{R}}V_{1,y}(x)x^{-iv}{\mathrm{d}x}\ll \left(\frac{Z}{v}\right)^A\ll \left(\frac{1}{q^{\varepsilon}}\right)^A,$$
for any $A\geq 0$. That is, for $y\ll 1$, $\widehat{\mathcal{V}}^{+}\left(n,y\right)$ is always negligibly small.

We now assume that $y\gg 1$. We use \eqref{expansion-of-J} to write
\begin{equation*}
\begin{split}
\widehat{\mathcal{V}}^{+}\left(n,y\right)=&\sum_{\pm}\int_{\mathbb{R}}X^{-iv}V(x)h\left(\frac{c}{C},\frac{n-Xx\ell}{C^2q}\right)\frac{1}{(4\pi\sqrt{xy})^{1/2}}W_{k-1,\pm}(4\pi\sqrt{xy})e\left(-\frac{v\log x}{2\pi}\pm2\sqrt{xy}\right){\mathrm{d}x}\\
:=&\sum_{\pm}\frac{1}{y^{1/4}}\int_{\mathbb{R}}V_{2,y}(x)e\left(-\frac{v\log x}{2\pi}\pm2\sqrt{xy}\right){\mathrm{d}x},
\end{split}
\end{equation*}
where $V_{2,y}(x)$ satisfies $x^iV^{(i)}_{2,y}(x)\ll Z^i$. Again, integration by parts implies that
\begin{equation*}
\begin{split}
\widehat{\mathcal{V}}^{+}\left(n,y\right)\ll \frac{1}{y^{1/4}}\left(\frac{Z}{|-v/2\pi\pm \sqrt{y}|}\right)^A,
\end{split}
\end{equation*}
for any $A\geq 0$, which shows that $\widehat{\mathcal{V}}^{+}\left(n,y\right)$ is negligibly small unless $|-v/2\pi\pm \sqrt{y}|\leq v$. Therefore we may assume that the $y$-variable satisfies
$$y\asymp v^2$$
(otherwise $\widehat{\mathcal{V}}^{+}\left(n,y\right)$ and $\mcW_{\pm\pm}(y,z)$ have negligible size for $n\asymp XL$ and $y,z\geq q^{-B}$.)

In this remaining range we evaluate $\mcW_{\pm\pm}(y,z)$. Recall from \eqref{double-weight-function}, we have
\begin{equation}\label{double-weight-function-2}
\begin{split}&\mcW_{\pm\pm}(y,z)=\int_{\mathbb{R}}V(x)(Xx)^{-iv} \mcJ^\pm_{f}(4\pi \sqrt{xy})\\
&\quad\quad\quad \times z\int_{0}^{\infty}(XL\xi)^{iv}U(\xi)h\left(\frac{c}{C},\frac{XL\xi-Xx\ell}{C^2q}\right)J_{\vphi,\pm}(z\xi){\mathrm{d}\xi} \,{\mathrm{d}x}.\\
\end{split}
\end{equation}
We consider the inner integral above. Recall here $U(\xi)$ is a test function satisfying \eqref{eqtestfct} with $Z_U=1$. If $z\ll 1$, then by using \eqref{Bessel3-small-x} and \eqref{hderivatives} we have $\xi^iW^{(i)}_{1,z}(\xi)\ll z^{-\rho}$, where $$W_{1,z}(\xi):=(XL)^{iv}U(\xi)h\left(\frac{c}{C},\frac{XL\xi-Xx\ell}{C^2q}\right)J_{\vphi,\pm}(z\xi).$$ Hence using integration by parts, the inner integral satisfies
\begin{equation*}
\int_{\mathbb{R}}W_{1,z}(\xi)\xi^{iv}{\mathrm{d}\xi} \ll z^{-\rho}v^{-A}\ll q^{-A\eps/2}
\end{equation*}
by taking $A$ sufficiently large. This implies that $\mcW_{\pm\pm}(y,z)$ is negligibly small when $q^{-B}\leq z\ll 1$.

Now we assume that $z\gg 1$ and use \eqref{Bessel3-large-x} to rewrite the inner integral over $\xi$ in \eqref{double-weight-function-2} as
\begin{gather*}
\sum_{\pm}\int_{0}^{\infty}(XL)^{iv}U(\xi)h\left(\frac{c}{C},\frac{XL\xi-Xx\ell}{C^2q}\right)\frac{W_{\vphi}^{\pm}((z\xi)^{1/3})}{(z\xi)^{1/3}}e\left(\frac{v\log \xi}{2\pi}\pm3(z\xi)^{1/3}\right){\mathrm{d}\xi}\\
:=\sum_{\pm}\frac{1}{z^{1/3}}\int_{0}^{\infty}W_{2,z}(\xi)e\left(\frac{v\log \xi}{2\pi}\pm3(z\xi)^{1/3}\right){\mathrm{d}\xi}
\end{gather*}
up to a negligible error term. Here $$W_{2,z}(\xi)=(XL)^{iv}\xi^{-1/3}U(\xi)h\left(\frac{c}{C},\frac{XL\xi-Xx\ell}{C^2q}\right)W_{\vphi}^{\pm}((z\xi)^{1/3})$$ satisfies $\xi^iW^{(i)}_{2,z}(\xi)\ll 1$. Applying integration by parts, the integral above is negligibly small unless 
$$z\asymp v^3.$$

\qed
\begin{remark} The above argument shows in fact that for $z\geq q^{-B}$ and $y>0$ one has
\begin{equation}\label{expansion-of-W}
\begin{split}&\mcW_{\pm\pm}(y,z)=z^{2/3}\int_{\mathbb{R}}V(x)(Xx)^{-iv} \mcJ^\pm_{f}(4\pi \sqrt{xy})\\
&\quad\quad\quad \times \int_{0}^{\infty}W_{2,z}(\xi)e\left(\frac{v\log \xi}{2\pi}-3(z\xi)^{1/3}\right){\mathrm{d}\xi} \,{\mathrm{d}x}+``\text{negligible\, error\, term}".\\
\end{split}
\end{equation}
	where the main term is also negligible unless possibly, when \eqref{yzphase} is satisfied.
	
\end{remark}

\section{The case $q\not| n_1$}
For the sum in \eqref{eqaftervoronoi}, we further split it into two subsums according to $(n_1,q)=1$ or not, and write 
$$ \mathrm{Main}_0= \mathrm{Main}_{00}+ \mathrm{Err}_4+O(q^{-A}),$$
where
\begin{multline}
\mathrm{Main}_{00}=\frac{X}{L^\star Cq}\sum_{\pm\pm}\sum_\stacksum{c\leq  2C}{(c,q)=1}\frac{1}{c}\sum_\stacksum{\ell\in\rmL}{(\ell,c)=1}\ov{\lambda(1,\ell)}\ell^{-iv}\sum_\stacksum{n_1|rc}{(n_1,q)=1}\sum_{m,n}\ov{\lf(m)}\\
\times\frac{\lambda(n,n_1)}{nn_1}\mathcal{C}(m,n;\frac{rcq}{n_1}) \mcW_{\pm\pm}\left(\frac{m}{c^2q^2/X},\frac{n_1^2n}{c^3q^3r/XL}\right)\label{n1qcoprime}
\end{multline}
and $\mathrm{Err}_4$ corresponds to the complementary sum where $q|n_1$. 
In \S \ref{q-divide-n1} we briefly analyse the contribution from $\mathrm{Err}_4$ (see \eqref{qdivn1} and \eqref{qdivn1final}).

We have
$$e(\frac{\pm\ov{bcr^2+u\ell}m}{cq})=e(\frac{\pm\ov{bcr^2+u\ell}\ov{c}m}{q})e(\frac{\pm\ov{u\ell}\ov qm}{c})$$
$$S(r\ov u,\pm n;\frac{rcq}{n_1})=S(\ov{c}n_1\ov u,\pm \ov{rc}n_1n;{q})
S(\ov qr\ov u,\pm \ov qn;{rc}/n_1).$$
Therefore, the $(b,u)$ sum in $q^{-1/2}\mathcal{C}(m,n;\frac{rcq}{n_1})$ splits into a product of two sums of respective moduli $rc/n_1$ and $q$. 

The modulus $rc$ sum is denoted by
\begin{equation}\label{Mrcsum}
M_{n_1,r}(m,n,\ell;rc):=\sumstar_{u(c)}e(\frac{\pm\ov{u\ell}\ov qm}{c})S(\ov qr\ov u,\pm \ov qn;{rc/n_1}).	
\end{equation}
The modulus $q$ sum is given by
\begin{multline*}
N_{\ov{cr}}(m,n,\ell;q):=\frac{1}{q}\sumsumstar_\stacksum{b(q),u(q)}{(bcr^2+u\ell,q)=1}\what K(b)e(\frac{\pm\ov{bcr^2+u\ell}\ov{c}m}{q})S(\ov{c}n_1\ov u,\pm \ov{rc}n_1n;{q})\\=\frac{1}{q^{1/2}}\sumsumstar_\stacksum{b(q),u(q)}{(b+u\ell,q)=1}\what K(b)e(\frac{\pm\ov c^2\ov r^2\ov{b+u\ell}m}{q})\Kl_2(\pm \ov{c}^3\ov{r}^3 n_1^2n\ov u;{q})=	\sumstar_{u(q)}L_{\pm\ov c^2\ov r^2m,\ell}(u;q)\Kl_2(\pm \ov{c}^3\ov{r}^3 n_1^2n\ov u;{q})
\end{multline*}
where
\begin{equation}\label{definition-L}
L_{\alpha,\beta}(u;q):=\frac{1}{q^{1/2}}\sum_\stacksum{b(q)}{(b+\beta u,q)=1}\what K(b)e\left(\frac{\alpha\, \ov{b+\beta u}}{q}\right).
\end{equation}
We will sometime suppress the parameters $\alpha$ and $\beta$, and abbreviate $L_{\alpha,\beta}(u;q)$ as $L(u;q)$.

From these notations we find that the sum $\mathrm{Main}_{00}$ in \eqref{n1qcoprime} becomes
\begin{multline}
\mathrm{Main}_{00}=\frac{X}{L^\star Cq^{1/2}}\sum_{\pm\pm}\sum_\stacksum{c\leq  2C}{(c,q)=1}\frac{1}{c}\sum_\stacksum{\ell\in\rmL}{(\ell,c)=1}\ov{\lambda(1,\ell)}\ell^{-iv}\sum_\stacksum{n_1|rc}{(n_1,q)=1}\sum_{m,n}\ov{\lf(m)}\frac{\lambda(n,n_1)}{nn_1}\times\\
M_{n_1,r}(m,n,\ell;rc)N_{\ov{cr}}(m,n,\ell;q) \mcW_{\pm\pm}(\frac{m}{c^2q^2/X},\frac{n_1^2n}{c^3q^3r/XL}).\label{after-voronoi}
\end{multline}

We break the $c$-sum into $O(\log q)$ dyadic intervals and for $C'\leq 2C$ we evaluate the truncated version of  $ S_V(K)$ where $c\sim C'$. Here $C'$ satisfies
$$X^{1/2-\eta}/q\leq C'\leq 2C=(XL/q)^{1/2}.$$
We set
\begin{equation}\label{MNdef}
M=Z^2\frac{{C'}^2q^{2+2\varepsilon}}{X},\ N=Z^3\frac{{C'}^3q^{3+3\varepsilon}r}{XL}.	
\end{equation}
By Lemma \ref{lem-vlocalize} we have
$$m\asymp M, nn^2_1\asymp N$$
which we abbreviate by $$m\approx M,\ nn_1^2\approx N.$$
\begin{remark}\label{nottoosmall}
From this discussion we see that $C'$ cannot be too small: we have $m/M \leq 1$ and since $m\geq 1$, then
$$C'\geq X^{1/2}/Zq^{1+\varepsilon},$$ and this also implies that $N$ is not too small
$$N\geq \frac{X^{1/2}r}L.$$
\end{remark}

\subsection{Cauchy--Schwarz}\label{CSsec}
We will now apply Cauchy--Schwarz inequality with the $n,n_1$ variables outside (to get rid of the factor $\lambda(n,n_1)$) but we need some preparation.

 We factor $c=c_1c_2$ with $$c_1\leq C',\ {n_1|rc_1},\ c_1|(n_1r)^\infty\hbox{ and }(c_2,n_1r)=1.$$

Then we apply Cauchy--Schwarz and \eqref{eq-RS2} to remove the $\GL_3$ coefficients in \eqref{after-voronoi}. Using Lemma \ref{lem-vlocalize} the sum $\mathrm{Main}_{00}$ is bounded by four terms (for the various choices of $\pm,\pm$) of the shape

$$\label{after-cauchy}
\frac{q^{o(1)}X}{LCq^{1/2}}\frac{1}{C'N}A^{1/2}B^{1/2}$$
with
$$A=\sumsum_{nn_1^2\approx N}{|\lambda(n,n_1)|^2n_1}\ll q^{o(1)}N=q^{o(1)}Z^3{C'}^3q^{3}r/XL$$
\begin{multline*}
B=\sumsum_\stacksum{c_1,nn_1^2\approx N}{(n_1,q)=1}n_1\biggl| \sum_\stacksum{\ell\in\rmL}{(\ell,c_1)=1}\ov{\lambda(1,\ell)}\ell^{-iv}\sum_{m\leq M}\ov{\lf(m)}\\
\times
\sum_\stacksum{c_2\sim C'/c_1}{(c_2,q\ell)=1}M_{n_1,r}(m,n,\ell;rc_1c_2)N_{\ov {c_1c_2r}}(m,n,\ell;q)\mcW_{\pm\pm}(\frac{m}{c_1^2c_2^2q^2/X},\frac{n_1^2n}{c_1^3c_2^3q^3r/XL}) \biggr|^2 U\left(\frac{n}{N/n^2_1}\right).
\end{multline*}
Here $U$ is a smooth function with compact support contained in $(0,\infty)$ satisfying \eqref{eqtestfct} with $Z_U=1$.
%with $$\mcW(m,n_1^2n)=\mcW_{\pm\pm}(\frac{m}{M},\frac{n_1^2n}{N}).$$ 

Hence we obtain that
\begin{equation}\label{main-after-cauchy}
\mathrm{Main}_{00}\ll \frac{q^{o(1)}X^{3/2}}{r^{1/2}Z^{3/2}L^{1/2}C{C'}^{5/2}q^2}B^{1/2}.
\end{equation}
       
 After opening the square, the second factor $B$ equals
\begin{gather}\label{eqprepoisson}
B=\sumsum_{c_1,n_1}n_1\sumsum_\stacksum{\ell,\ell'}{m,m'}\sumsum_{c_2,c_2'}\times\\
\sum_{n\geq 1} M_{n_1,r}(m,n,\ell;rc_1c_2)\ov{M_{n_1,r}(m',n,\ell';rc_1c'_2)}N_{\ov {c_1c_2r}}(m,n,\ell;q)\ov{N_{\ov {c_1c'_2r}}(m',n,\ell';q)}\mcW\left(\frac{n}{N/n^2_1}\right),	\nonumber
\end{gather}
 where 
 \begin{equation}\label{weight-before-poisson}
 \mcW\left(\frac{n}{N/n^2_1}\right)=U\left(\frac{n}{N/n^2_1}\right)\mcW_{\pm\pm}(\frac{m}{c^2_1c^2_2q^2/X},\frac{n_1^2n}{c^3_1c^3_2q^3r/XL})\ov{\mcW_{\pm\pm}}(\frac{m'}{c^2_1{c'_2}^2q^2/X},\frac{n_1^2n}{c^3_1{c'_2}^3q^3r/XL}).
 \end{equation}
\begin{remark}\label{remarksize}
At this point it may be useful to recall the typical size of the various quantities involved: we should imagine that $c_1=n_1=r=1$ and
$$X=q^3,\ L=q^\eta\hbox{ with $\eta>0$ as small as need be},\ C=qL^{1/2}$$
$$q^{1/2}\leq C'\leq qL^{1/2}.$$
$$M={C'}^2q^{-1}\in[1,qL],\ N= {C'}^3q^3/q^3L={C'}^3/L\in[q^{3/2}/L,q^3L^{1/2}]$$
and
$$c_2c'_2q={C'^2}q\in[q^2,q^3L^{1/2}],\ c_2c'_2\approx Q/q\in[q,q^2L^{1/2}].$$
In particular
$$N={C'}^3/L\geq (c_2c'_2q)^{1/2}={C'}q^{1/2};$$
therefore we are in the {\em Polya--Vinogradov} range where applying the Poisson summation formula to \eqref{eqprepoisson} is beneficial. 

%$$n\hbox{(after Poisson)}\leq N^*=Q/({C'}^3/L)=qL/{C'}\in[L^{1/2},q^{1/2}L].$$

\end{remark}

We apply Poisson formula to the $n$-variable keeping in mind that 
\begin{equation}\label{eq-MMNN} n\mapsto M_{n_1,r}(m,n,\ell;rc_1c_2)\ov{M_{n_1,r}(m',n,\ell';rc_1c'_2)}N_{\ov {c_1c_2r}}(m,n,\ell;q)\ov{N_{\ov {c_1c'_2r}}(m',n,\ell';q)}
\end{equation}
 is periodic of period $qk:=qrc_1c_2c_2'/n_1$, and see that \eqref{eqprepoisson} equals
\begin{equation}\label{eqpostpoisson}
\sumsum_{c_1,n_1}n_1\sumsum_\stacksum{\ell,\ell'}{m,m'}\sumsum_{c_2,c_2'}\frac{N}{n^2_1(qk)^{1/2}}\sum_{n\in\Zz}\mathrm{FT}(n,m,m',\ell,\ell';qk)\what \mcW(n/N^*),
\end{equation}
 where
       \begin{multline*}
\mathrm{FT}(n,m,m',\ell,\ell';qk)=\sumsum_{u,u'\mods q}
L_{\pm\ov c^2\ov r^2m,\ell}(u;q)\ov{L_{\pm\ov {c'}^2\ov r^2m',\ell'}(u';q)}\times\\
\frac{1}{\sqrt{qk}}\sum_{v\mods {qk}}\Kl_2(\pm \ov{c}^3\ov{r}^3 n_1^2v\ov u;{q})
\Kl_2(\pm \ov{c'}^3\ov{r}^3 n_1^2v\ov u';{q}) M_{n_1,r}(m,v,\ell;rc_1c_2)\ov{M_{n_1,r}(m',v,\ell';rc_1c'_2)}\, e\left(\frac{nv }{qk}\right)
\end{multline*} 
and
     \begin{equation}\label{defN*}
       N^*:=qkn_1^2/ N	\asymp \frac{n_1}{c_1}\frac{qr{C'}^2}{N}(\asymp \frac{n_1XL}{Z^3c_1C'q^{2+3\varepsilon}}).
       \end{equation}
    Here we can truncate the dual $n$-sum in \eqref{eqpostpoisson} at $|n|\ll q^{2\varepsilon}Z N^*$. 
    \begin{remark}The truncation of the dual $n$-sum follows from the fact that $\mcW(n/N^*)$ is negligibly small when $|n|\gg q^{2\varepsilon}Z N^*$, as can be seen from the expression in \eqref{expansion-of-W}. Indeed, from \eqref{weight-before-poisson} the Fourier transform $\what \mcW(w)$ after Poisson summation equals
\begin{equation}
\what \mcW(w)=\int_{\mathbb{R}}U\left(y\right)\mcW_{\pm\pm}(-,\frac{yN}{c^3_1c^3_2q^3r/XL})\ov{\mcW_{\pm\pm}}(-,\frac{yN}{c^3_1{c'_2}^3q^3r/XL})e(-wy)\mathrm{d}y.
\end{equation}
From \eqref{expansion-of-W}, we see that the factor in the integral that depends on $y$ is of the form 
$$\int_{\mathbb{R}}U\left(y\right)e\big(-3(\frac{yNXL}{c^3_1c^3_2q^3r}\xi_1)^{1/3}+3(\frac{yNXL}{c^3_1{c'_2}^3q^3r}\xi_2)^{1/3}-wy\big)\mathrm{d}y$$   
where $\xi_1, \xi_2\asymp 1$,    
       and for $w\neq 0$ by applying integration by parts, this is
       $$\ll \left(\frac{q^{\varepsilon}Z}{|w|}\right)^A$$
       upon noting that $\frac{NXL}{{C'}^3q^3r}\asymp q^{3\varepsilon}Z^3$ by \eqref{MNdef}. Hence $\what \mcW(w)$ is negligibly small when $|w|\gg  q^{2\varepsilon}Z$.
       \end{remark}

\subsection{Computation of $\mathrm{FT}$}\label{computation-fourier-transform}
Recall $k=rc_1c_2c'_2/n_1$. We have $(q,k)=1$ and we split the above sum $\mathrm{FT}(n,m,m',\ell,\ell';qk)$ as a product of sums $\FT(n;q)$ and $\FT(n;k)$ of respective moduli $q$ and $k$ (to simplify notations we do not display the dependency in $m,m',\ell,\ell'$ in these expressions).
  \subsubsection{The $k$-sum} The $k$-sum equals
$$\FT(n;k):=\frac{1}{\sqrt{k}}\sum_{v(k)}M_{n_1,r}(m,v,\ell;rc_1c_2)\ov{M_{n_1,r}(m',v,\ell'  ;rc_1c'_2)}\, e\left(\frac{ nv \bar{q}}k\right).$$
\begin{lemma}\label{bound-k-part}
 We have the following bounds
\begin{equation*}
\FT(0;k)\ll \sqrt{k}rc_1c_2\mathop{\sum_{d|c_1c_2}\sum_{d'|c_1c_2}}_{(\ell'   d,\ell d')|(m\ell'  -m'\ell )} (d ,d'),
\end{equation*}
and
\begin{equation*}
\begin{split}
\FT(n;k)\ll \sqrt{k}\sum_{d_1|c_1}d_1\sum_{d'_1|c_1}d'_1
\mathop{\sumstar_\stacksum{x_1(rc_1/n_1)}
{\ell  n_1x_1\equiv \mp m\mods{d_1}}}\sumsum_\stacksum{d_2|(c_2,\ell n_1c'_2+nm)}{d'_2|(c'_2,\ell'  n_1c_2+nm')}d_2d'_2.
\end{split}\end{equation*}
\end{lemma}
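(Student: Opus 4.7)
The plan is to expand $M_{n_1,r}(m,v,\ell;rc_1c_2)$ by opening its Kloosterman sum as
\begin{equation*}
S(\ov qr\ov u,\pm\ov qv;rc_1c_2/n_1)=\sideset{}{^*}\sum_{y}e\Bigl(\tfrac{\ov qr\ov u y\pm\ov qv\bar y}{rc_1c_2/n_1}\Bigr),
\end{equation*}
and similarly for the conjugate factor (with variables $u',y'$). Since $v$ appears only linearly in the combined phase (clearing common denominators against $k=rc_1c_2c_2'/n_1$ introduces the cofactors $c_2'$ and $c_2$ respectively), the inner sum $\sum_{v(k)}$ executes by orthogonality of additive characters and collapses to $k$ times the indicator of the linear constraint
\begin{equation*}
\pm c_2'\bar y\mp c_2\overline{y'}+n\equiv 0\pmod{k}.
\end{equation*}
This reduces $\FT(n;k)$ to $\sqrt{k}$ times a character-weighted count of $(u,u',y,y')$ satisfying this constraint.

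Next I would apply CRT to $k$. Since $c_1\mid(rn_1)^\infty$ while $(c_2c_2',rn_1)=1$, the factor $rc_1/n_1$ of $k$ is coprime to $c_2c_2'$. On the $(rc_1/n_1)$-piece the constraint couples $y$ to $y'$; pushing the coupling back through the external $u$-phases $e(\pm\ov{u\ell}\ov qm/c_1c_2)$ and $e(\mp\ov{u'\ell'}\ov qm'/c_1c_2')$, and parametrizing the $u,u'$ residues by divisors $d_1, d_1'\mid c_1$ (the gcds of the Kloosterman arguments with their moduli) produces the starred congruence $\ell n_1 x_1\equiv\mp m\pmod{d_1}$ appearing in the bound. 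On the $c_2$- and $c_2'$-pieces the constraint decouples up to the overlap $(c_2,c_2')$; evaluating the remaining (complete) exponential sums by the Weil bound where generic and trivially at the overlap yields the divisibilities $d_2\mid(c_2,\ell n_1c_2'+nm)$ and $d_2'\mid(c_2',\ell'n_1c_2+nm')$, weighted by $d_2d_2'$.

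For $\FT(0;k)$ the constraint becomes the homogeneous relation $c_2'\bar y\equiv\pm c_2\overline{y'}\pmod k$. This does not admit Weil-type cancellation but instead forces a compatibility between residue classes; propagating it through the $u,u'$-phases translates it into the condition $(\ell'd,\ell d')\mid(m\ell'-m'\ell)$ on divisors $d,d'\mid c_1c_2$, and the surviving diagonal count contributes the prefactor $rc_1c_2$ with $(d,d')$ measuring the overlap loss. The case distinction between $n=0$ and $n\ne 0$ is precisely the distinction between a degenerate and a generic linear constraint on $(\bar y,\overline{y'})$.

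The main technical obstacle is the CRT bookkeeping: $y$ is a unit only modulo $rc_1c_2/n_1$ and $y'$ only modulo $rc_1c_2'/n_1$, so $\bar y,\overline{y'}$ viewed modulo $k$ depend on the choice of representatives, and the treatment must separate primes $p\mid(c_2,c_2')$ (where both Kloosterman moduli see $p$ and Weil cancellation is available) from primes dividing only one of $c_2,c_2'$ (where the constraint degenerates and produces the divisor losses recorded in the statement). Assembling the Weil estimates on the generic pieces with the divisor-weighted trivial bounds on the degenerate pieces, together with the $c_1$-contribution worked out above, delivers the two claimed inequalities.
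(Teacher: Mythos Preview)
Your overall skeleton is right and matches the paper: open the Kloosterman sums, execute the $v$-sum by orthogonality to get the linear constraint $\pm c_2'\bar y\mp c_2\overline{y'}+n\equiv 0\pmod k$, then CRT-split. Where you go off track is invoking the Weil bound. After the $v$-sum there are no exponential sums requiring Weil: the remaining $u,u'$-sums are Ramanujan sums, and everything reduces to elementary congruence counting. Nothing in the shape of the stated bound (divisor-weighted counts, no square-root factors in $c_2,c_2'$) reflects Weil-type savings.

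In the paper the $u$-sum is in fact evaluated \emph{first}, before touching $v$. After opening the Kloosterman sum with variable $x\pmod{rc/n_1}$, the phase in $u$ is linear in $\bar u$, so summing over $u$ gives a Ramanujan sum and
\[
M_{n_1,r}(m,n,\ell;rc)=\sum_{d\mid c}d\,\mu\!\Bigl(\tfrac{c}{d}\Bigr)\sumstar_{\substack{x\,(rc/n_1)\\ \ell n_1x\equiv\mp m\,(d)}}e\Bigl(\tfrac{\pm\bar q\, n\bar x}{rc/n_1}\Bigr).
\]
Substituting this (and its primed analogue) into $\FT(n;k)$, the $v$-sum is the only exponential sum left and collapses to $k\cdot\mathbf 1_{\bar xc_2'-\bar{x'}c_2\equiv\mp n\,(k)}$. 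What remains is pure counting of pairs $(x,x')$ subject to this constraint together with $\ell n_1x\equiv\mp m\pmod d$, $\ell'n_1x'\equiv\mp m'\pmod{d'}$. For $n=0$ the constraint forces $c_2=c_2'$ and $x\equiv x'$, and the two divisor congruences combine to give $(\ell'd,\ell d')\mid(m\ell'-m'\ell)$. For $n\neq 0$ one factors $d=d_1d_2$, $d'=d_1'd_2'$ along $c=c_1c_2$; reducing the constraint mod $c_2$ gives $\bar{x_2}c_2'\equiv\mp n$, and combining this with $\ell n_1x_2\equiv\mp m\pmod{d_2}$ forces $d_2\mid(\ell n_1c_2'+nm)$. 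That is the origin of the divisibilities in the statement, not any cancellation in a character sum. No separation of primes in $(c_2,c_2')$ is needed either; the constraint mod $c_2c_2'$ is handled as written.

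So your plan becomes correct once you replace the ``Weil where generic'' step by the Ramanujan-sum expansion and straightforward solution counting.
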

\proof
To see this, we recall
\begin{equation}\label{MnRam}
M_{n_1,r}(m,n,\ell;rc)=\sum_{d|c}d\mu\left(\frac{c}{d}\right)\sumstar_\stacksum{x(rc/n_1)}{\ell n_1x\equiv \mp m\mods d}e\left(\frac{\pm \bar{q}n\bar{x}}{rc/n_1}\right).	
\end{equation}
Then
\begin{equation}\label{mcM(n)}
\begin{split}
\FT(n;k)=&\frac{1}{\sqrt{k}}\sum_{d|c_1c_2}d\mu\left(\frac{c_1c_2}{d}\right)\sum_{d'|c_1c_2'}d'\mu\left(\frac{c_1c_2'}{d'}\right)\times\\ &\ \ \sumstar_\stacksum{x(rc_1c_2/n_1)}{\ell  n_1x\equiv \mp m\mods d}\sumstar_\stacksum{x'(rc_1c'_2/n_1)}{\ell'   n_1x'\equiv \mp m'\mods {d'}}\sum_{v(k)}e\left(\frac{(\pm\bar{q}\bar{x}c'_2\mp \bar{q}\ov{x'}c_2+n\bar{q})v}{k}\right)\\
=&\sqrt{k}\sum_{d|c_1c_2}d\mu\left(\frac{c_1c_2}{d}\right)\sum_{d'|c_1c_2'}d'\mu\left(\frac{c_1c_2'}{d'}\right)\mathop{\sumstar_\stacksum{x(rc_1c_2/n_1)}{\ell  n_1x\equiv \mp m\mods d}\sumstar_\stacksum{x'(rc_1c'_2/n_1)}{\ell'   n_1x'\equiv \mp m'\mods{d'}}}_{\bar{x}c'_2-\ov{x'}c_2\equiv \mp n\mods k}1.
\end{split}
\end{equation}
We calculate the case where $n\equiv 0\mods k$ first. 

For $n\equiv 0\mods k$, the congruence $\bar{x}c'_2-\ov{x'}c_2\equiv \mp n\mods k$ forces 
\begin{equation}\label{cequalc'}
c_2=c'_2 	
 \end{equation}
 and $x\equiv x' \mods{rc_1c_2/n_1}$. Therefore
$$\FT(0;k)=\sqrt{k}\sum_{d|c_1c_2}d\mu\left(\frac{c_1c_2}{d}\right)\sum_{d'|c_1c_2}d'\mu\left(\frac{c_1c_2}{d'}\right)\mathop{\sumstar_\stacksum{x(rc_1c_2/n_1)}{\ell  n_1x\equiv \mp m\mods{d}}}_{\ell'   n_1x\equiv \mp m'\mods{d'}}1.$$
Notice that since we have $(\ell,c)=1$ (see \eqref{S'sum}), for $d|c$ we have $(d,\ell)=1$. The system of equations $\ell  n_1x\equiv \mp m\mods{d}$ and $\ell'   n_1x\equiv \mp m'\mods{d'}$ has a unique solution modulo $[\frac{d}{(n_1,d)},\frac{d'}{(n_1,d')}]$; moreover it implies that $(\ell'   d,\ell d')|(m\ell'  -m'\ell )$. The number of solutions for $x\mods {\frac{rc_1c_2}{n_1}}$ is therefore given by
$$\frac{rc_1c_2/n_1}{[\frac{d}{(n_1,d)},\frac{d'}{(n_1,d')}]}=\frac{rc_1c_2}{[n_1,d,d']}.$$
Hence $$\FT(0;k)= \sqrt{k}\mathop{\sum_{d|c_1c_2}\sum_{d'|c_1c_2}}_{(\ell'   d,\ell d')|(m\ell'  -m'\ell )} d\mu\left(\frac{c_1c_2}{d}\right)d'\mu\left(\frac{c_1c_2}{d'}\right)\frac{rc_1c_2}{[n_1,d,d']}.$$
In particular,
\begin{equation}
\begin{split}
\FT(0;k)\ll& \sqrt{k}rc_1c_2\mathop{\sum_{d|c_1c_2}\sum_{d'|c_1c_2}}_{(\ell'   d,\ell d')|(m\ell'  -m'\ell )}(d,d').
\end{split}\end{equation}

Next we consider the case where $n\neq 0$ in $\FT(n;k)$. In \eqref{mcM(n)} we write $d=d_1d_2$ where $d_1|c_1$, $d_2|c_2$ and $d'=d'_1d'_2$ where $d'_1|c_1$, $d'_2|c'_2$. Then the sum splits as 
\begin{equation}
\begin{split}
\FT(n;k)=&\sqrt{k}\sum_{d_1|c_1}d_1\mu\left(\frac{c_1}{d_1}\right)\sum_{d'_1|c_1}d'_1\mu\left(\frac{c_1}{d'_1}\right)
\sum_{d_2|c_2}d_2\mu\left(\frac{c_2}{d_2}\right)\sum_{d'_2|c'_2}d'_2\mu\left(\frac{c'_2}{d'_2}\right)\\
&\mathop{\sumstar_\stacksum{x_1(rc_1/n_1)}{\ell  n_1x_1\equiv \mp m\mods{d_1}}\sumstar_\stacksum{x'_1(rc_1/n_1)}{\ell'   n_1x'_1\equiv \mp m'\mods{d'_1}}}_{\ov{x_1}c'_2-\ov{x'_1}c_2\equiv \mp n\mods{rc_1/n_1}} \,\, \mathop{\sumstar_\stacksum{x_2(c_2)}{\ell  n_1x_2\equiv \mp m\mods{d_2}}\sumstar_\stacksum{x'_2(c'_2)}{\ell'   n_1x'_2\equiv \mp m'\mods{d'_2}}}_{\ov{x_2}c'_2-\ov{x'_2}c_2\equiv \mp n\mods{c_2c'_2}}1\\
:=& \sqrt{k}\FT_1(n;k)\FT_2(n;k),
\end{split}
\end{equation}
where
$$\FT_1(n;k)=\sum_{d_1|c_1}d_1\mu\left(\frac{c_1}{d_1}\right)\sum_{d'_1|c_1}d'_1\mu\left(\frac{c_1}{d'_1}\right)\mathop{\sumstar_\stacksum{x_1(rc_1/n_1)}{\ell  n_1x_1\equiv \mp m\mods{d_1}}\sumstar_\stacksum{x'_1(rc_1/n_1)}{\ell'   n_1x'_1\equiv \mp m'\mods{d'_1}}}_{\ov{x_1}c'_2-\ov{x'_1}c_2\equiv \mp n\mods{rc_1/n_1}}1,$$
and
$$\FT_2(n;k)=\sum_{d_2|c_2}d_1\mu\left(\frac{c_2}{d_2}\right)\sum_{d'_2|c'_2}d'_2\mu\left(\frac{c'_2}{d'_2}\right)\mathop{\sumstar_\stacksum{x_2(c_2)}{\ell  n_1x_2\equiv \mp m\mods{d_2}}\sumstar_\stacksum{x'_2(c'_2)}{\ell'   n_1x'_2\equiv \mp m'\mods{d'_2}}}_{\ov{x_2}c'_2-\ov{x'_2}c_2\equiv \mp n\mods{c_2c'_2}}1.$$
In $\FT_1(n;k)$, the term $x'_1\mods{rc_1/n_1}$ is completely determined by $x_1\mods{rc_1/n_1}$. Therefore estimating trivially, one sees that
$$\FT_1(n;k)\ll \sum_{d_1|c_1}d_1\sum_{d'_1|c_1}d'_1
\mathop{\sumstar_\stacksum{x_1(rc_1/n_1)}
{\ell  n_1x\equiv \mp m\mods{d_1}}1}.$$

For $\FT_2(n;k)$, the congruence conditions there imply that $d_2|\ell n_1c'_2+nm$ and $d'_2|\ell'  n_1c_2+nm'$. Therefore
$$\FT_2(n;k)\ll \sumsum_\stacksum{d_2|(c_2,\ell n_1c'_2+nm)}{d'_2|(c'_2,\ell'  n_1c_2+nm')}d_2d'_2.$$
\qed

\subsubsection{The $q$-sum}    \label{secq-sum}   
       The $q$-sum equals
\begin{multline}\label{eq-qsum2}
\FT(n;q)=\frac{1}{\sqrt q}\sumsum_{u,u'\mods q}
L_{\pm\ov c^2\ov r^2m,\ell}(u;q)\ov{L_{\pm\ov {c'}^2\ov r^2m',\ell'}(u';q)}\times\\
\sum_{v\mods q}\Kl_2(\pm \ov{c}^3\ov{r}^3 n_1^2v\ov u;{q})\Kl_2(\pm \ov{c'}^3\ov{r}^3 n_1^2v\ov u';{q})\, e\left(\frac{\ov kv n}q\right).
\end{multline}
It will be useful to transform it to make it amenable to a sheaf-theoretic treatment.

For $\alpha,\beta,\gamma,\alpha',\beta',\gamma'\in\Fqt$ we recall (see \eqref{definition-L})
\begin{equation}\label{L-sum2}
L(u;q)=L_{\alpha,\beta}(u;q):=\frac{1}{\sqrt{q}}\sum_{b(q)}\widehat{K}(b)e\left(\frac{\alpha\overline{(b+\beta u)}}{q}\right)=\frac{1}{\sqrt{q}}\sum_{a\in\Fq}K(a)\Kl_2(\alpha a;q)e(-\frac{\beta au}q),
\end{equation}
the latter identity following from the expression of the Fourier transform \eqref{fourierdef}.
We also define
$$M(u):=\Kl_2(\gamma u;q)$$
and define likewise $L'(u;q),\ M'(u')$ with $\alpha',\beta',\gamma'$.
The following choices of values of the parameters correspond to our initial problem:
\begin{gather}\alpha=\pm \ov c^2\ov r^2m,\ \beta=\ell,\ \alpha'=\pm \ov {c'}^2\ov r^2m',\ \beta'=\ell'\nonumber\\
\gamma=\pm \ov{c}^3\ov{r}^3 n_1^2,\ \gamma'=\pm \ov{c'}^3\ov{r}^3 n_1^2,\ \delta=\ov k n,\label{eq-actual}
\end{gather}
we see (by switching the $u,u'$ sums and the $v$ sum) that \eqref{eq-qsum2} equals
$${\sqrt q}\sum_{v\mods q}M\star L(v)\ov{M'\star L'(v)}e(\frac{\delta v}q)$$
where $M\star L$ denotes the normalized multiplicative convolution 
$$M\star L(v)=\frac{1}{q^{1/2}}\sum_{u\in\Fqt }M(u)L(v/u)=\frac{1}{q^{1/2}}\sum_{u\in\Fqt }M(v u)L(1/u).$$

To evaluate such sums further we will need to make a few elementary transformations. 

By Plancherel formula we have
\begin{equation}\label{MLsum}
\sum_{v}M\star L(v)\ov{M'\star L'(v)}e(\frac{\delta v}q)=\sum_{v}\what{M\star L}(v)\ov{\what{M'\star L'}(v-\delta)}=\sum_{v}Z(v)\ov{Z'(v-\delta)}\end{equation}
say. Let us now compute $Z(v)$:
$$Z(v)=\frac{1}{q^{1/2}}\frac{1}{q^{1/2}}\sum_{x\in\Fqt }\sum_{u}M(x)L(u/x)e(\frac{vu}{q})=\frac{1}{q^{1/2}}\sum_{x\in\Fqt}M(x)\what L(xv).$$
By \eqref{L-sum} we have 
$$\what{L}(x)=K(x/\beta)\Kl_2(\alpha/\beta x;q)$$
and
\begin{equation}\label{Zcompute}
Z(v)=\frac{1}{q^{1/2}}\sum_{x\in\Fqt}\Kl_2(\beta\gamma x;q)K(xv)\Kl_2(\alpha xv;q).	
\end{equation}

In \S \ref{sec-sqroot} we will prove the following
\begin{proposition}\label{sqrootcancel} 
Let $T_\mcF(\Fq)$ be the subgroup of $\Fqt$ defined by
$$ T_\mcF(\Fq)=\{\lambda\in\Fqt,\ [\times\lambda]^*\mcF\hbox{ is geometrically isomorphic to }\mcF\}.
$$
 Assuming that the sheaf $\mcF$ is good, then for any $\alpha,\beta,\alpha',\beta',\gamma,\gamma',\delta\in\Fqt$, we have
$$\sum_{v}Z(v)\ov{Z'(v-\delta)}=O(q^{1/2}).$$
If $\delta=0$ the above bound holds unless
$$\alpha/\alpha'=\beta\gamma/\beta'\gamma'\in T_\mcF(\Fq)$$
in which case
$$\sum_{v}Z(v)\ov{Z'(v)}=c_\mcF(\alpha/\alpha')q+O(q^{1/2})$$
for $c_\mcF(\alpha/\alpha')$ some complex number of modulus $1$.
Here the implicit constants depend only on $C(\mcF)$.
\end{proposition}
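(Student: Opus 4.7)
My plan is to interpret $Z(v)$ as the trace function of an $\ell$-adic sheaf $\mcZ$ on $\Gm_{\Fq}$ and to apply Deligne's bounds for sums of trace functions. The substitution $w = xv$ for $v \in \Fqt$ rewrites \eqref{Zcompute} as a multiplicative convolution
\[
Z(v) = q^{-1/2}\sum_{w \in \Fqt} t_{\mcK_\alpha}(w)\, t_{\mcL_{\beta\gamma}}(v/w), \qquad \mcK_\alpha := \mcF \otimes [\times\alpha]^*\KL_2, \quad \mcL_{\beta\gamma} := [y \mapsto \beta\gamma/y]^*\KL_2,
\]
which identifies $Z$, up to boundary contributions of size $O(q^{-1/2})$, with the trace function of the multiplicative middle convolution $\mcZ := \mcK_\alpha \star_{\mathrm{mid}} \mcL_{\beta\gamma}$ on $\Gm$. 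I would define $\mcZ'$ analogously from $(\alpha',\beta',\gamma')$.

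Writing $\overline{Z'(v-\delta)} = t_{[v \mapsto v-\delta]^* D\mcZ'}(v)$ and applying the Grothendieck--Lefschetz trace formula together with Deligne's Weil~II bounds yields
\[
\sum_{v \in \Fq} Z(v)\overline{Z'(v-\delta)} = q \cdot \dim \bigl(\mcZ \otimes [v \mapsto v-\delta]^* D\mcZ'\bigr)^{\pi_1^{\mathrm{geom}}} + O(q^{1/2}),
\]
where the implicit constant depends only on the ranks and total Swan conductors of $\mcZ$ and $\mcZ'$, themselves controlled by $C(\mcF)$ through the Euler--Poincar\'e formula. When $\mcZ$ and $\mcZ'$ are geometrically irreducible of equal rank, the coinvariant space on the right is one-dimensional precisely when $\mcZ$ is geometrically isomorphic to $[v \mapsto v-\delta]^*\mcZ'$ (up to an arithmetic Tate twist) and vanishes otherwise; in the former case the main term contributes $c\cdot q$ with $|c|=1$, matching the statement.

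The required geometric inputs are therefore (a) geometric irreducibility of $\mcZ$, and (b) a classification of when $\mcZ \cong [v \mapsto v-\delta]^*\mcZ'$ geometrically. For (a), hypothesis (MO) forbids $\mcF$ from having an irreducible geometric subquotient isomorphic, modulo $\pm 1$, to any $[\times\lambda]^*\KL_2$, which is the precise obstruction to the tensor $\mcK_\alpha = \mcF \otimes [\times\alpha]^*\KL_2$ being geometrically irreducible; since $\mcL_{\beta\gamma}$ is geometrically irreducible and non-Kummer, Katz's theory of multiplicative middle convolution \cite{ESDE} (or Rojas-Le\'{o}n's local convolution formalism) then forces $\mcZ$ to be geometrically irreducible.

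For (b), which is the most delicate point, I would combine Rojas-Le\'{o}n's theory of local multiplicative convolutions with Deligne's semi-continuity theorem for Swan conductors to locate the singularities of $\mcZ$ in $\{0, \beta\gamma/\alpha, \infty\}$ and to compute the local monodromy (slope decomposition, unipotent parts, etc.) at each. Hypothesis (SL), forbidding a slope-$1/2$ summand at $\infty$ for $\mcF$, is what prevents spurious coincidences of the local structure at $\infty$ between $\mcZ$ and translates of $\mcZ'$: once the decomposition at $\infty$ is known, matching singular-point locations of $\mcZ$ with those of $[v \mapsto v-\delta]^*\mcZ'$ forces $\delta = 0$ (otherwise the singularity of $\mcZ$ at $0$ would have to match that of the translate at $\delta$) and $\beta\gamma/\alpha = \beta'\gamma'/\alpha'$, equivalently $\alpha/\alpha' = \beta\gamma/(\beta'\gamma')$. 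An ``unconvolution'' with the dual $\mcL_{\beta\gamma}^\vee$ then reduces the residual geometric isomorphism to $[\times(\alpha/\alpha')]^*\mcF \cong \mcF$, i.e.\ $\alpha/\alpha' \in T_\mcF(\Fq)$, and pins down the scalar $c$ as the resulting arithmetic Frobenius trace. The hard part of the proof will be exactly this local analysis at the interior singularity $\beta\gamma/\alpha$, where two $\KL_2$-type singularities collide and where determining the unipotent-versus-non-unipotent nature of the local monodromy requires delicate semi-continuity bookkeeping.
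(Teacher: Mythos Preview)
Your framework—realizing $Z$ as the trace of the middle convolution $\mcZ=\mcK_\alpha\star\mcL_{\beta\gamma}$, proving geometric irreducibility from (MO), and classifying isomorphisms $\mcZ\cong[+\delta]^*\mcZ'$ through local data—is the paper's route for $\delta\neq 0$. For $\delta=0$ the paper argues differently and more cheaply: unwinding the convolution by Plancherel (together with the orthogonality of $\KL_2$ with its nontrivial multiplicative twists) reduces $\sum_vZ\overline{Z'}$, up to $O(q^{1/2})$, to the one-variable sum $\sum_aK(a/\beta)\Kl_2(\alpha a/\beta)\overline{K(a/\beta')\Kl_2(\alpha' a/\beta')}$, and a Goursat argument using (MO) then isolates the diagonal $\alpha/\alpha'=\beta/\beta'\in T_\mcF$. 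Your ``unconvolution'' plan would eventually land in the same place but is a detour, and the extraction $\mcK_\alpha\cong[\times\lambda]^*\mcK_{\alpha'}\Rightarrow[\times\lambda]^*\mcF\cong\mcF,\ \alpha=\lambda\alpha'$ still requires that Goursat step.

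There is a genuine gap in your $\delta\neq 0$ argument. The assertion that matching singular loci ``forces $\delta=0$'' is false: the finite singularities of $\mcZ$ lie in $\{0,\beta\gamma/\alpha\}$ and those of $[+\delta]^*\mcZ'$ in $\{\delta,\delta+\beta'\gamma'/\alpha'\}$, and set equality also permits the \emph{swap} $\delta=\beta\gamma/\alpha=-\beta'\gamma'/\alpha'$. Ruling out this swap is exactly the job of the hard local analysis, so your sketch has the logic inverted. The paper's mechanism is: if the swap held, the local monodromy of $\mcZ$ at $0$ would have to coincide with that of $\mcZ'$ at its interior point $\beta'\gamma'/\alpha'$; equating the dimensions of the invariant subspaces at these two points already forces $\mcF$ to be lisse on $\Gm$, unipotent at $0$ with all Jordan blocks of size $2,3,4$, all $\infty$-slopes $<1/2$, and $\chi(\mathbb A^1,\mcF)=0$; under these constraints Rojas-Le\'on's local convolution calculus then shows the monodromy of $\mcZ$ at $0$ is \emph{unipotent} while that of $\mcZ'$ at $\beta'\gamma'/\alpha'$ is \emph{not}, a contradiction. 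You do mention the unipotent/non-unipotent dichotomy, but only as a closing refinement once $\delta=0$ is in hand—whereas it is the very device that forces $\delta=0$. Relatedly, your account of (SL) is off: additive translation fixes $\infty$, so the $\infty$-structure of $\mcZ$ cannot constrain $\delta$. In the paper (SL) is what confines the finite singularities of $\mcZ$ to $\{0,\beta\gamma/\alpha\}$ (via Deligne semicontinuity, since slope-$1/2$ summands in $\mcF$ would collide with those of $\KL_2$ at $\infty$ and create extra drop points in $v$) and what drives the invariant-dimension calculations above.
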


Returning to our original sum, we see that \eqref{eq-qsum2} is  $O(q)$ unless (observe that the $r$ variable is gone)
$${c'}^2m/c^2m'={c'}^3\ell/c^3\ell'\in T_\mcF(\Fq)$$ in which case 
\eqref{eq-qsum2} equals $C({c'}^2m/c^2m')q^{3/2}+O(q)$ with $|C({c'}^2m/c^2m')|=1.$

\section{Contribution of the $n=0$ frequency}\label{contribution-of-zero}

In this section we bound the contribution  to \eqref{eqpostpoisson} of the frequency $n=0$. By \eqref{cequalc'}, we then have
\begin{equation}\label{cequal}
c_2=c_2',\ c=c',\ k=rc_1c_2^2/n_1.	
\end{equation}

We use the case $\delta=0$ of Proposition \ref{sqrootcancel}: by \eqref{cequal} and \eqref{eq-actual} we have that \eqref{eq-qsum2} is  $O(q)$ unless we have the congruence modulo $q$
$$m/m'=\ell/\ell'\in T_\mcF(\Fq)$$ in which case 
\eqref{eq-qsum2} equals $C(m/m')q^{3/2}+O(q)$ with $|C(m/m')|=1.$
 
 The contribution of the $n=0$ frequency to \eqref{eqpostpoisson} is bounded by
\begin{multline*}\ll 
\frac{N}{q^{1/2}}\sumsum_{n_1,c=c_1c_2}\frac{1}{n_1k^{1/2}}\sumsum_{\ell,\ell'}|\lambda(1,\ell)||\lambda(1,\ell')|\\\times\sumsum_{m,m'\leq M}|\lf(m)||\lf(m')|(q^{3/2}\delta_{m/m'=\ell/\ell'\in T_\mcF(\Fq)}+q)|\FT(0;k)|	
\end{multline*}
\begin{multline*}\ll 
\frac{N}{q^{1/2}}\sumsum_{n_1,c=c_1c_2}\frac{1}{n_1k^{1/2}}\sumsum_{\ell,\ell'}|\lambda(1,\ell')|^2\\\times\sumsum_{m,m'\leq M}|\lf(m)|^2(q^{3/2}\delta_{m/m'=\ell/\ell'\in T_\mcF(\Fq)}+q)|\FT(0;k)|	
\end{multline*}
\begin{multline*}\ll q^{o(1)}
\frac{rN{C'}}{q^{1/2}}\sumsum_{n_1,c}\frac{1}{n_1}\sumsum_{\ell,\ell'}|\lambda(1,\ell')|^2\\\times\sumsum_{m,m'\leq M}|\lf(m)|^2(q^{3/2}\delta_{m\ell'=\ell m'\mods q}+q)\sumsum_\stacksum{d,d'|c}{(\ell' d, \ell d')|(m\ell'  -m'\ell )} ({d},{d'}).	
\end{multline*}
Writing $b=(d,d')$ the sum is bounded by
$$\ll q^{o(1)}
\frac{rN{C'}}{q^{1/2}}\sumsum_{n_1,b|c\sim C'}\frac{b}{n_1}\sumsum_\stacksum{\ell,\ell'\sim L,m,m'\leq M}{b|\ell'm-\ell m'}|\lambda(1,\ell')|^2 |\lf(m)|^2(q^{3/2}\delta_{m\ell'=\ell m'\mods q}+q)
$$
$$\ll q^{o(1)}
\frac{rN{C'}}{q^{1/2}}LM\sum_{b|c\sim C'}b\left(q^{3/2}(\frac{LM}{qb}+1)+q(\frac{LM}{b}+1)\right)$$
$$\ll q^{o(1)}
\frac{rN{C'}}{q^{1/2}}LMC'(q^{1/2}{LM}+C'q^{3/2}+q{LM}+{C'}q)
\ll q^{o(1)}
\frac{rN{C'}^2LM}{q^{1/2}}(C'q^{3/2}+q{LM}).$$

Taking the squareroot of this term and multiplying by $\frac{X^{3/2}}{r^{1/2}Z^{3/2}L^{1/2}C{C'}^{5/2}q^2}$, we see that the contribution of these terms to \eqref{main-after-cauchy} and then to \eqref{n1qcoprime} is bounded by
\begin{equation}\label{n=0bound}
\begin{split}
&q^{o(1)}\frac{r^{1/2}XM^{1/2}}{CL^{1/2}q^{3/4}}\left({C'}^{1/2}q^{3/4}+q^{1/2}L^{1/2}M^{1/2}\right)\\
\ll&q^{o(1)}r^{1/2}\left(Z\frac{X^{3/4}q^{3/4}}{L^{1/4}}+Z^2 L^{1/2}X^{1/2}q^{5/4}\right).	
\end{split}\end{equation}

In particular for $X=q^3$ we obtain
$$q^{o(1)}r^{1/2}(Z\frac{q^{3}}{L^{1/4}}+Z^2L^{1/2}q^{3-1/4})$$
which when $r=1$ is non-trivial for $1<L<q^{1/2}$.

\section{Contribution from the $n\not=0$ frequencies} \label{secn=0}

Recall from \eqref{eqpostpoisson} that
\begin{equation}
\begin{split}
B_{n\neq 0}=&\frac{N}{q^{1/2}}\sumsum_{c_1,n_1}\frac1{n_1}\sum_{\ell\in\rmL}\ov{\lambda(1,\ell)}{\ell}^{-iv}\sum_{\ell'\in\rmL}\lambda(1,\ell'){\ell'}^{iv}\sum_{m\leq M}\ov{\lf(m)}\sum_{m'\leq M}{\lf(m')}\sum_{c_2\sim C'/c_1}\sum_{c_2'\sim C'/c_1}\\
&\frac{1}{k^{1/2}}\sum_{n\not=0}\mathrm{FT}(n;q)\mathrm{FT}(n;k)\what \mcW(n/N^*).
\end{split}\end{equation}
%We make the following observation: in view of Remark \ref{nottoosmall} and the assumption \eqref{Lbound} we have
%$$|n|\leq q^{o(1)}N^*=qk/N\leq q^{9\eta/2+o(1)}\frac{q{C'^2}L}{c_1n_1X^{1/2}r}\leq q^{9\eta/2+o(1)}\frac{X^{1/2}L^2}{r}<q$$
%as long as $\eta$ is small enough compared to the constant $\eta$ in \eqref{Lbound}. 
%In other terms, under the assumption \eqref{Lbound}, $n\not=0$ implies that $n\not= 0\mods q$. 

We consider two cases: $n\not=0\mods q$ and $n\equiv0\mods q$. 
\subsection{$n\neq 0\mods q$}
By Proposition \ref{sqrootcancel}, we have 
$$\mathrm{FT}(n;q)\ll q.$$
 Combining this with Lemma \ref{bound-k-part}, gives
\begin{equation*}
\begin{split}
B_{n\neq 0 \mods q}\ll &{q^{o(1)}Nq^{1/2}}\sumsum_{c_1,n_1}\frac{1}{n_1}\sum_{\ell\in\rmL}|\ov{\lambda(1,\ell)}|\sum_{\ell'\in\rmL}|\lambda(1,\ell')|\sum_{m}|\ov{\lf(m)}|\sum_{m'}|{\lf(m')}|\sum_{c_2}\sum_{c_2'}\\
&\sum_\stacksum{n\ll ZN^*}{n\neq 0 \mods q}\sum_{d_1,d'_1|c_1}d_1d'_1
\mathop{\sumstar_\stacksum{x_1(rc_1/n_1)}
{\ell  n_1x_1\equiv \mp m\mods{d_1}}}\sumsum_\stacksum{d_2|(c_2,\ell n_1c'_2+nm)}{d'_2|(c'_2,\ell'  n_1c_2+nm')}d_2d'_2|\what \mcW(n/N^*)|.
\end{split}\end{equation*}
By using $|\lambda(1,\ell)||\lambda(1,\ell')||\lf(m)||\lf(m')|\ll |
\lambda(1,\ell')\lf(m)|^2+|\lambda(1,\ell)\lf(m')|^2$, we have
\begin{equation*}
\begin{split}
B_{n\neq 0 \mods q}\ll &q^{o(1)}Nq^{1/2}\sumsum_{c_1,n_1}\frac{c_1}{n_1}\sum_{\ell\in\rmL}\sum_{\ell'\in\rmL}|\lambda(1,\ell)|^2 \sum_\stacksum{n\ll ZN^*}{n\neq 0 \mods q}\sum_{d_1|c_1}d_1
\\
&\sum_{m\leq M}\sum_{m'\leq M}|{\lf(m')}|^2\sum_{c_2\sim C'/c_1}\sum_{c_2'\sim C'/c_1}\mathop{\sumstar_\stacksum{x_1(rc_1/n_1)}
{\ell  n_1x_1\equiv \mp m\mods{d_1}}}\sumsum_\stacksum{d_2|(c_2,\ell n_1c'_2+nm)}{d'_2|(c'_2,\ell'  n_1c_2+nm')}d_2d'_2.
\end{split}\end{equation*}
The right hand side, when replacing $c_2$ by $c_2d_2$ and $c'_2$ by $c'_2d'_2$, is
\begin{equation*}
\begin{split}
&q^{o(1)}Nq^{1/2}\sumsum_{c_1,n_1}\frac{c_1}{n_1} \sum_\stacksum{n\ll ZN^*}{n\neq 0 \mods q}\sum_{d_1|c_1}d_1\sum_{d_2\ll C'/c_1}d_2\sum_{d'_2\ll C'/c_1}d'_2\sum_{c_2\sim C'/c_1d_2}
\\
&\sum_{\ell'\in\rmL}\sum_\stacksum{m'\leq M}{\ell'  n_1c_2d_2+nm'\equiv 0\mods{d'_2}}|{\lf(m')}|^2\times\sum_{c_2'\sim C'/c_1d'_2}\sum_{\ell\in\rmL}|\ov{\lambda(1,\ell)}|^2\sumstar_{x_1(rc_1/n_1)}
\mathop{\sum_\stacksum{m\leq M}{\ell n_1c'_2d'_2+nm\equiv 0\mods{d_2}}}_{\ell  n_1x_1\equiv \mp m\mods{d_1}}1.
\end{split}\end{equation*}
Bounding the number of solutions in $m$ of the congruence equations  $\ell n_1c'_2d'_2+nm\equiv 0\mods{d_2}$ and $\ell  n_1x_1\equiv \mp m\mods{d_1}$ we see that
\begin{eqnarray*}
\sum_{c_2'\sim C'/c_1d'_2}\sum_{\ell\in\rmL}|\ov{\lambda(1,\ell)}|^2\sumstar_{x_1(rc_1/n_1)}
\mathop{\sum_\stacksum{m\leq M}{\ell n_1c'_2d'_2+nm\equiv 0\mods{d_2}}}_{\ell  n_1x_1\equiv \mp m\mods{d_1}}1&&\ll q^{o(1)}(d_2,n)\frac{rc_1}{n_1}\frac{C'L}{c_1d'_2}\left(1+\frac{M}{d_1d_2}\right)\\
&&\ll	q^{o(1)}(d_2,n)\frac{r}{n_1}\frac{C'L}{d'_2}\left(1+\frac{M}{d_1d_2}\right).
\end{eqnarray*}
Averaging this bound over the remaining variables we obtain
\begin{multline*}
B_{n\neq 0 \mods q}\ll q^{o(1)}rNq^{1/2}{C'}L\sumsum_{c_1,n_1}\frac{c_1}{n^2_1} \sum_\stacksum{n\ll ZN^*}{n\neq 0 \mods q}\\
\sum_{d_2\ll C'/c_1}(d_2,n)\left(c_1d_2+M\right)
\sum_{d'_2\ll C'/c_1}\sum_{c_2\sim C'/c_1d_2}\sum_{\ell'\in\rmL}\sum_\stacksum{m'\leq M}{\ell'  n_1c_2d_2+nm'\equiv 0\mods{d'_2}}|{\lf(m')}|^2.	
\end{multline*}
-- If $\ell'  n_1c_2d_2+nm'\not=0$ we interpret the congruence $\ell'  n_1c_2d_2+nm'\mods{d'_2}$ as $d'_2$ being a divisor of that integer so that the contribution of such terms is bounded by
$$
\ll q^{o(1)}rNq^{1/2}{C'}L\sumsum_{c_1,n_1}\frac{c_1}{n^2_1} ZN^*\sum_{d_2\leq C'/c_1}
\left(c_1d_2+M\right)\frac{C'}{c_1d_2}
LM$$
$$
=q^{o(1)}rZNq^{1/2}{C'}^2L^2M\sumsum_{c_1,n_1}\frac{c_1}{n^2_1} \frac{n_1}{c_1}\frac{qr{C'}^2}{N}\sum_{d_2\leq C'/c_1}
\left(1+\frac{M}{c_1d_2}\right)
$$
$$
=q^{o(1)}r^2Z{C'}^4q^{3/2}L^2M\sumsum_{c_1,n_1}\frac{1}{n_1}\sum_{d_2\leq C'/c_1}
\left(1+\frac{M}{c_1d_2}\right)
$$
$$=q^{o(1)}r^2Z{C'}^5q^{3/2}L^2M
\left(1+\frac{M}{{C'}}\right).$$

-- On the other hand, the contribution of the terms satisfying $\ell'  n_1c_2d_2+nm'=0$ is bounded by (this follows from bounding the number of representations of $m'n'$ as a product of four factors, where $n'=n/(d_2,n)$ and using the Rankin--Selberg bound \eqref{eq:RS})
\begin{multline*}
q^{o(1)}rNq^{1/2}{C'}^3L\sumsum_{c_1,n_1}\frac{1}{n^2_1} \sum_\stacksum{n\ll ZN^*}{n\neq 0 \mods q}\\
\sum_{d_2\ll C'/c_1}(d_2,n)\left(1+\frac{M}{{C'}}\right)
\sum_{c_2\sim C'/c_1d_2}\sum_{\ell'\in\rmL}\sum_\stacksum{m'\leq M}{\ell'  n_1c_2d_2=-nm'}|{\lf(m')}|^2	
\end{multline*}
$$\ll q^{o(1)}rNq^{1/2}{C'}^3L\left(1+\frac{M}{{C'}}\right)\sumsum_{c_1,n_1}\frac{1}{n^2_1}ZN^*M=q^{o(1)}r^2Z{C'}^5q^{3/2}LM\left(1+\frac{M}{{C'}}\right),$$
which is smaller than the previous term.

\subsection{${q|n,\ n\not=0}$} In that case we write $n=qn'$ with $n'\leq q^{o(1)}N^*/q$ and have
$$\FT(n'q;q)\ll q^{3/2}.$$
We have there lost a factor $q^{1/2}$ by comparison with the previous section. On the other hand we can run exactly the same argument as above with $N^*$ replaced by $N^*/q$ and all in all we find that 
 $$B_{q|n,n\not=0}\ll q^{o(1)}r^2Z{C'}^5q^{3/2}L^2M
\left(1+\frac{M}{{C'}}\right)\frac1{q^{1/2}}.$$ 
The non-zero frequencies contribution to \eqref{main-after-cauchy} and hence to \eqref{n1qcoprime} is bounded by 
\begin{gather}\nonumber
q^{o(1)} \frac{X^{3/2}}{r^{1/2}Z^{3/2}L^{1/2}C{C'}^{5/2}q^2} \left(B_{n\neq 0 \mods q}+B_{q|n,n\not=0}\right)^{1/2}\\
\leq q^{o(1)} \frac{X^{3/2}}{r^{1/2}Z^{3/2}L^{1/2}C{C'}^{5/2}q^2}  \left(r^2Z{C'}^5q^{3/2}L^2M\left(1+\frac{M}{{C'}}\right)\right)^{1/2}\nonumber\\
\leq q^{o(1)}r^{1/2}\left(\frac{L^{1/2}X}{q^{1/4}}+ ZL^{3/4}X^{3/4}q^{1/2}\right).\label{nonzerocomb}
\end{gather}

\subsection{Bounding $\mathrm{Main}_0$: Conclusion}\label{q-divide-n1}
Let us recall that the sum $\mathrm{Main}_0$ in \eqref{eqaftervoronoi} was split into two subsums depending on whether $(n_1,q)=1$ or not. 

By \eqref{n=0bound} and \eqref{nonzerocomb} the first subsum \eqref{n1qcoprime} is bounded by
\begin{equation}\label{n1qcoprimefinal}
\ll q^{o(1)}r^{1/2}\Big(Z\frac{X^{3/4}q^{3/4}}{L^{1/4}}+Z^2L^{1/2}X^{1/2}q^{5/4}
+\frac{L^{1/2}X}{q^{1/4}}+ZL^{3/4}X^{3/4}q^{1/2}\Big).	
\end{equation}
The complement sum (when $q|n_1$) is given (rewriting $n_1$ into $qn_1$) by
\begin{multline}
\mathrm{Err}_4=\frac{X}{L^\star Cq^2}\sum_{\pm\pm}\sum_\stacksum{c\leq  2C}{(c,q)=1}\frac{1}{c}\sum_{\ell\in\rmL}\ov{\lambda(1,\ell)}\ell^{-iv}\sum_{n_1|rc}\sum_{m,n}\ov{\lf(m)}\\
\times\frac{\lambda(n,n_1q)}{nn_1}\mathcal{C}(m,n;\frac{rc}{n_1})\mcW_{\pm\pm}\left(\frac{m}{c^2q^2/X},\frac{n_1^2n}{c^3qr/XL}\right)	\label{qdivn1}
\end{multline}
where
\begin{gather*}
\mathcal{C}(m,n;\frac{rc}{n_1})=\frac{1}{q^{1/2}}\sumsumstar_{b(q),u(cq)}\what K(b)e\left(\frac{\pm\ov{bcr^2+u\ell}m}{cq}\right)S\left(r\ov u,\pm n;\frac{rc}{n_1}\right)\\
=\frac{1}{q^{1/2}}\sumsumstar_{b(q),u(q)}\what K(b)e\left(\frac{\pm\ov{bcr^2+u\ell}m\ov c}{q}\right)\times \sumstar_{u\mods c}	
e\left(\frac{\pm\ov{u\ell}m\ov q}{c}\right)S\left(r\ov u,\pm n;\frac{rc}{n_1}\right)\\
=-K(0)\sumstar_{u\mods c}	
e\left(\frac{\pm\ov{u\ell}m\ov q}{c}\right)S\left(r\ov u,\pm n;\frac{rc}{n_1}\right).
\end{gather*}
This last $u$-sum is very similar to the sum \eqref{Mrcsum} discussed previously (the Kloosterman sum $S\left(\ov qr\ov u,\pm \ov q n; rc/n_1\right)$ has just been replaced by 
$S\left(r\ov u,\pm n; rc/n_1\right)$) and is bounded by $(rc)^{1+o(1)}$ (cf. \eqref{MnRam}). Using this bound we obtain that \eqref{qdivn1} is bounded by
\begin{equation}\label{qdivn1final}
\ll q^{o(1)+\theta_3}\frac{X}{LCq^2}L\frac{Z^2 C^2q^2}XrC= q^{o(1)+\theta_3}rZ^2\frac{XL}q.	
\end{equation}
Combining this bound with \eqref{n1qcoprimefinal} we obtain that

\begin{multline}
\mathrm{Main}_0\ll q^{o(1)+\theta_3}rZ^2\frac{XL}q+	q^{o(1)}r^{1/2}\Big(Z\frac{X^{3/4}q^{3/4}}{L^{1/4}}\\
+Z^2L^{1/2}X^{1/2}q^{5/4}+\frac{L^{1/2}X}{q^{1/4}}+ZL^{3/4}X^{3/4}q^{1/2}\Big). \label{final-for-main}	
\end{multline}

\section{Bounding $S_V(K,X)$: the final steps}

\subsection{Bounding $\mathrm{Err}_1$, $\mathrm{Err}_2$ and $\mathrm{Err}_3$ }\label{Err23}
Our treatment of the error terms $\mathrm{Err}_1$, $\mathrm{Err}_2$ and $\mathrm{Err}_2$  is similar to that of $\mathrm{Main}_0$ and in the end these terms  yield smaller contributions. We will again start by using  Voronoi summation on the $m$ and the $n$ variables but, from this point on, the argument becomes much simpler than that of \S \ref{CSsec}. For instance for $\mathrm{Err}_1$ it is sufficient to bound the resulting sums trivially (no need to apply Cauchy--Schwarz to smooth the $n$ variable). Since these arguments have already been presented in full details and in a more complex form in \S \ref{CSsec}, we will be brief and pass over various technical points (for instance we will often assume the coprimality of different variables).

\subsubsection{$\mathrm{Err}_1$}We refer to \eqref{eqErr2} for the shape of this term. As before we apply Voronoi summation formulas to the $m$ and $n$ sums. The chief difference is that the exponential  $$n\mapsto e(\frac{an}c)$$ has modulus $c\leq C$ instead of $cq$. The dual sum has length $$(cZ)^3/XL\leq Z^3X^{3/2}L^{3/2}/(Xq^{3/2}L)=Z^3(XL)^{1/2}/q^{3/2}$$
which is rather small ($\leq L^{1/2}$ for $X=q^3$) and the exponential is (essentially) transformed into Kloosterman sums typically of the shape 
\begin{equation}\label{dualn}
	n\mapsto \Kl_2(r\ov{a}n;rc).
\end{equation}

For the $m$-sum, the  function $$m\mapsto K(mr^2)e(\frac{-am\ell}c)$$ has period $cq$, therefore the dual $m$-sum (after Voronoi) has length
$$(cqZ)^2/X\leq Z^2qL$$ and is weighted
 (essentially) by the function
\begin{equation}\label{dualm}m\mapsto \widecheck K(\pm\ov{c}^2\ov{r}^2m)e(\pm \frac{\ov{a\ell q^2}m}{c})	
\end{equation}
where
$$\widecheck K(m)=\frac{1}{\sqrt q}\sum_{b\in\Fqt}\what K(b)e(\frac{\ov b m}q)=\bigl(\what K\star e(\frac{\cdot}{q})\bigr)(m).$$
The convolution $\widecheck K$ is a trace function unless $\mcF$ is geometrically isomorphic to $[x\mapsto \alpha x]^*\KL_2,\ \alpha\in\Fqt$ (whose Fourier transform is $[x\mapsto x^{-1}]^*\mcL_{\psi}$ for $\mcL_{\psi}$ the Artin-Schreier sheaf attached to an additive character depending on $\alpha$), but such situation is excluded by both the MO and the SL assumptions. Summing the product of \eqref{dualn} and \eqref{dualm} over $a\mods c$ the resulting sum is bounded by
$$\ll \|\widecheck K\| c^{1/2+o(1)}$$ and therefore
we obtain
\begin{equation}
	\begin{split}\label{Err1bound}
\mathrm{Err}_1&\ll \frac{q^{o(1)}}{LCq}L\sum_{c\leq C}\frac{1}c\frac{XL}{(cZ)^{3/2}}\frac{(cZ)^3}{XL}\frac{X}{cqZ}\frac{(cqZ)^2}{X}c^{1/2}\\&=
 \frac{q^{o(1)}Z^{5/2}}{Cq}\sum_{c\leq C}{c^{2}}{q}=
{q^{o(1)}Z^{5/2}C^2}=q^{o(1)}Z^{5/2}\frac{XL}q.	
\end{split}
\end{equation}

\subsubsection{$\mathrm{Err}_2$} We refer to \eqref{eqErr3} for notations and the shape of this term; in particular $c\leq C/q$. This time the two functions
$$n\mapsto e(\frac{an}{cq^2}),\ m\mapsto K(mr^2)e(\frac{-a\ell m}{cq^2})$$
have modulus $cq^2\leq Cq$. Under Voronoi, for $(a,cq)=1$ the first function transforms (essentially) into a Kloosterman sum of the shape 
\begin{equation}\label{dualn3}\Kl_2(\pm \ov an;cq^2)	
\end{equation}
 while the second sum transforms into 
\begin{equation}\label{dualm3}\frac{1}{q^{1/2}}\sum_{b\mods q}\what K(b)e(\pm m\frac{\ov{a\ell}(1-\ov{a\ell}br^2cq)}{cq^2})=K(\pm (\ov{a\ell})^2 r^2m)e(\pm \frac{\ov {a\ell} m}{cq^2}).	
\end{equation}
The sum over $a\mods {cq^2},\ (a,cq)=1$ of the product of \eqref{dualn3}
and \eqref{dualm3} equals
$$ \sumstar_{a\mods{cq^2}}K(\pm (\ov{a\ell})^2 r^2m)e(\pm \frac{ \ov{a\ell} m}{cq^2})\Kl_2(\pm  \ov an;cq^2)$$
$$=\bigl(\sumstar_{a\mods c}e(\pm\frac{\ov{a\ell q^2}m}{c})\Kl_2(\pm  \ov{aq^4}n;c)\bigr)\bigl(\sumstar_{a\mods{q^2}}K(\pm \ov{a\ell}^2 r^2m)e(\pm \frac{ \ov{a\ell c} m}{q^2})\Kl_2(\pm  \ov {ac^2}n;q^2)\bigr).$$
The first factor is (essentially) 
$$c^{1/2}e(-\frac{\ov{q}^2 n\ell \ov{m}}{c})$$ while the second factor equals
$$\sumstar_{a\mods{q^2}}K(\pm{a}^2 r^2m)e(\pm \frac{ a\ov{ c} m}{q^2})\Kl_2(\pm  a\ov {c^2}\ell n;q^2)=$$
$$\sumstar_{a\leq q}K(\pm{a}^2 r^2m)e(\pm \frac{ a\ov{ c} m}{q^2})\sum_{b\mods q}e(\pm \frac{ b\ov{ c} m}{q})\frac1q\sumstar_{x\mods {q^2}}
e(\frac{\pm (a+bq)\ov {c^2}\ell nx+\ov x}{q^2})$$
$$=\sumstar_{a\leq q}K(\pm{a}^2 r^2m)e(\pm \frac{ a\ov{ c} m}{q^2})\sum_\stacksum{x\mods {q^2}}{x\equiv -cm\ov{\ell n}\mods q}
e(\frac{\pm a\ov {c^2}\ell nx+\ov x}{q^2})$$
$$=\sumstar_{a\leq q}K(\pm{a}^2 r^2m)e(\pm \frac{ a\ov{ c} m}{q^2})e(\mp \frac{ a\ov{ c} m}{q^2})e(-\frac{\ov{cm}\ell n}{q^2})\sum_{y\mods q}e(\frac{\pm a\ov {c^2}\ell ny-\ov{cm}^2(\ell n)^2y}{q})$$
$$=qK(\ov{m}^3\ell^2r^2n^2)e(\frac{-\ov{c}n\ell \ov{m}}{q^2}).$$
Hence the sum over $a\mods {cq^2},\ (a,cq)=1$ of the product of \eqref{dualn3}
and \eqref{dualm3} equals (essentially) to
$$c^{1/2}q\cdot K(\ov{m}^3\ell^2r^2n^2)e(\frac{-n\ell \ov{m}}{cq^2}),$$
and $\mathrm{Err}_2$ is (essentially) transformed into
\begin{equation*}
\begin{split}
& \frac{q^{o(1)}}{LCq}\sum_{c\leq C/q}\frac{1}{c^{1/2}}\sum_{\ell\in\rmL}\lambda(\ell,1)\frac{XL}{(cq^2Z)^{3/2}}\sum_{n\sim \frac{(cq^2Z)^3}{XL}}\lambda(n,1)\frac{X}{cq^2Z}\sum_{m\ll \frac{(cq^2Z)^2}{X}}\lambda_f(m) K(\ov{m}^3\ell^2r^2n^2)e(\frac{-n\ell \ov{m}}{cq^2})\\
\ll& \frac{q^{o(1)}X^2}{Z^{5/2}Cq^6}\bigg|\sum_{c\leq C/q}\frac{1}{c^{3}}\sum_{n\sim Z^3(XLq^3)^{1/2}}\lambda(n,1)\sum_{\ell\in\rmL}\lambda(\ell,1)\sum_{m\ll Z^2qL}\lambda_f(m) K(\ov{m}^3\ell^2r^2n^2)e(\frac{-n\ell \ov{m}}{cq^2})\bigg|.
\end{split}
\end{equation*}
We just need to save a bit more than $O(q^\delta L)$ from the trivial estimate $\mathrm{Err}_2\ll q^{o(1)}XL$.

As in the treatment of the generic term $\mathrm{Main}_0$, we can then apply the Cauchy--Schwarz inequality to smooth the $n$-sum, but now we can put the $c$-sum outside the square and the $(m,\ell)$-sums inside the square. After this, applying Poisson summation to the $n$-variable leads to the following
\begin{equation*}
\begin{split}
&\sum_{n\sim Z^3(XLq^3)^{1/2}}K(\ov{m}^3\ell^2r^2n^2)\ov{K(\ov{m'}^3\ell'^2r^2n^2)}e(\frac{-n\ell \ov{m}}{cq^2})e(\frac{n\ell' \ov{m'}}{cq^2})=O_A(q^{-A})+\\
&\frac{Z^3(XLq^3)^{1/2}}{cq^2}\sum_{\beta\mods{q^2}}K(\ov{m}^3\ell^2r^2\beta^2)\ov{K(\ov{m'}^3\ell'^2r^2\beta^2)}e(\frac{-(\ell \ov{m}-\ell' \ov{m'})\beta \ov{c}}{q^2})\sum_{\gamma \mods c}e(\frac{-(\ell \ov{m}-\ell' \ov{m'})\ov{q^2}\gamma }{c}).
\end{split}
\end{equation*}
Here we have noticed that in the dual sum only the $n=0$ zero-frequency contributes, since $$cq^2\leq q^{-\eta}(XLq^3)^{1/2}$$ for some $\eta>0$. Writing $\beta=\beta'+qy,\ \beta'\leq q,\ y\mods q$ we see that this is further equal to
\begin{gather*}
\frac{Z^3(XLq^3)^{1/2}}{q^2}\sum_{\beta\leq{q}}K(\ov{m}^3\ell^2r^2\beta^2)\ov{K(\ov{m'}^3\ell'^2r^2\beta^2)}e(\frac{-(\ell \ov{m}-\ell' \ov{m'})\beta \ov{c}}{q^2})\\
\ \ \ \ \times \sum_{y\mods{q}}e(\frac{-(\ell \ov{m}-\ell' \ov{m'})y \ov{c}}{q})\cdot \delta_{m\ell'\equiv m'\ell \mods{c}}\\
=\frac{Z^3(XLq^3)^{1/2}}{q}\sum_{\beta\leq{q}}K(\ov{m}^3\ell^2r^2\beta^2)\ov{K(\ov{m'}^3\ell'^2r^2\beta^2)}e(\frac{-(\ell \ov{m}-\ell' \ov{m'})\beta \ov{c}}{q^2})\cdot \delta_{m\ell'\equiv m'\ell \mods{cq}}.
\end{gather*}
Given $(m,m',\ell, \ell')$, we consider two cases. 

-- If $m\ell'=m'\ell$ , the $\beta$-sum above is $\ll q$. Such terms contribute to $\mathrm{Err}_2$ 
$$\frac{q^{o(1)}X^2}{Z^{5/2}Cq^6}\sum_{c\leq C/q}\frac{1}{c^{3}}(XLq^3)^{1/4}\left(\sum_{\ell\in\rmL}|\lambda(\ell,1)|^2\sum_{m\ll Z^2qL}|\lambda_f(m)|^2 \frac{Z^3(XLq^3)^{1/2}}{q}\cdot q\right)^{1/2}\ll q^{o(1)}X/q^{1/2}.$$

-- If $m\ell'\neq m'\ell$, we can write $m\ell'=m'\ell+cq\delta$ with $\delta\neq 0$, then the $\beta$-sum after Poisson summation is 
$$\sum_{\beta\mods{q}}K(m'{\ell}\ell'r^2\beta^2)\ov{K({m'}\ell^2r^2\beta^2)}e(\frac{\beta \delta}{q}).$$
We distinguish two further cases. 

-- If $\mcF$ is ``non-exceptional" in the sense of \cite[p. 1686]{FKM2} (i.e., $\mcF$ is not geometrically isomorphic to the product of a Kummer and an Artin--Schreier sheaf or in other terms $K$ is not proportional to the product of an additive and a multiplicative character $\mods q$) then for any $\alpha\in\Fqt$ the pull-back sheaf $[x\mapsto \alpha x^2]^*\mcF$ whose trace function is $x\mapsto K(\alpha x^2)$ is non-exceptional and by  \cite[Theorem 6.3]{FKM2} the above sum is $\ll q^{1/2}$ for every $\delta\mods q$ unless $\ell/\ell'\mods q$ belongs to a subset $B\subset\Fqt$ satisfying $|B|=O(1)$. Then such terms contribute to $\mathrm{Err}_2$ 
$$q^{o(1)}\frac{X^2}{Z^{5/2}Cq^6}\sum_{c\leq C/q}\frac{1}{c^{3}}(XLq^3)^{1/4}\left(\sumsum_\stacksum{\ell,\ell',m,m'}{m\ell'\equiv m'\ell \mods{cq}}\frac{Z^3(XLq^3)^{1/2}}{q}\cdot (q^{1/2}+q\delta_{\ell/\ell'\in B})\right)^{1/2}$$
$$\ll q^{o(1)}Z(XL)^{3/4}+q^{o(1)}ZX^{3/4}L^{1/4}q^{1/4}.$$

-- If $\mcF$ is exceptional then we may assume that $K(x)=\chi(x)e(\frac{kx}q)$ for $k\in\Fq$ and $\chi$ a (non-trivial) Dirichlet character and the $\beta$-sum equals
$$\chi(m'{\ell}\ell')\ov{\chi({m'}\ell^2)}\sumstar_{\beta\mods{q}}e(\frac{km'{\ell}(\ell'-\ell)r^2\beta^2+\beta \delta}{q})\ll q^{1/2}+q\delta_\stacksum{\ell\equiv \ell'\mods q}{\delta\equiv 0\mods q}.$$
In either cases we obtain
\begin{equation}\label{Err2bound}\mathrm{Err}_2\ll q^{o(1)} X/q^{1/2}+q^{o(1)}Z(XL)^{3/4}+q^{o(1)}ZX^{3/4}L^{1/4}q^{1/4}.	
\end{equation}

\subsubsection{$\mathrm{Err}_3$} Recall from \eqref{eqErr4} $\mathrm{Err}_3$ takes the following shape
\begin{equation*}
\begin{split}
      \mathrm{Err}_3=&\frac{1}{L^\star Cq}\sum_{\ell\in\rmL}\ov{\lambda(1,\ell)}\ell^{-iv}\sum_\stacksum{c\leq  2C}{\ell |c, (c,q)=1}\frac{1}{c}\sumstar_{u(cq)}\sum_{n=1}^{\infty}\lambda(r,n)e\left(n\frac{u}{cq}\right)n^{iv}U\left(\frac{n}{XL}\right)\\
        &\frac{1}{q^{1/2}}\sum_{b\mods q}\what K(b)\sum_{m=1}^{\infty}\lf(m)e\left(\frac{-(bcr^2/\ell+u)m}{cq/\ell}\right)m^{-iv}V\left(\frac{m}X\right)h\left(\frac{c}{C},\frac{n-m\ell}{C^2q}\right).
\end{split}\end{equation*}
We treat exactly the same manner as what we did for the main term $\mathrm{Main}_0$ in \eqref{S'sum-2}. The chief difference occurs when we apply Proposition \ref{vorGL2}  to the $m$-sum, as now the modulus of the additive character is $cq/\ell$ rather than $cq$. Instead of getting \eqref{eqafter1stvoronoi}, we obtain
\begin{equation*}
\begin{split}
              \mathrm{Err}_3=&\frac{X}{L^\star Cq^2}\sum_{\pm}\sum_{\ell\in\rmL}\ov{\lambda(1,\ell)}\ell^{-iv}\sum_\stacksum{c\leq  2C}{\ell |c, (c,q)=1}\frac{\ell}{c^2}\frac{1}{q^{1/2}}\sumsumstar_{b(q),u(cq)}\what K(b)\\
        &\times\sum_{m=1}^{\infty}\ov{\lf(m)}e(\frac{\pm\ov{bcr^2/\ell+u}m}{cq/\ell})\sum_{n=1}^{\infty}\lambda(r,n)e\left(n\frac{u}{cq}\right)n^{iv}U\left(\frac{n}{XL}\right)\widehat{\mathcal{V}}^{\pm}_n\left(\frac{mX}{(c/\ell)^2q^2}\right).
        \end{split}\end{equation*}
        The extra factor $\ell$ in the summand of the $c$-sum will compensate with the congruence condition $\ell |c$, as compared to $\mathrm{Main}_0$, but now the effective length of the dual $m$-sum is $m\asymp M/L^2$, reduced by a factor $L^2$ as compared to $m\asymp M $ for the case of $\mathrm{Main}_0$; cf. \eqref{MNdef}. 
        
        Then we proceed as what we did for $\mathrm{Main}_0$, and the sum $\mathcal{C}(m,n;\frac{rcq}{n_1})$ in \eqref{eqaftervoronoi} will be changed into $\mathcal{C}(m\ell^2,n;\frac{rcq}{n_1})$. Correspondingly, the parameter $m$ in \S \ref{computation-fourier-transform} will have to be changed into $m\ell^2$ (and $m'\rightarrow m'{\ell'}^2$). The change of the parameter $m\rightarrow m\ell^2$ in the character sum has the effect of swapping the parameters $\ell$ and $\ell'$ in the congruence conditions in \S \ref{contribution-of-zero}, and everything else essentially remains the same. Since the length of the $m$-sum is reduced from $M$
to $M/L^2$, by book-keeping eventually we will save some factor $O(L^{\eta}), \eta>0$ for $\mathrm{Err}_3$, compared to the bound that we obtained for $\mathrm{Main}_0$.

\subsection{Conclusion}Collecting the bounds \eqref{final-for-main}, \eqref{Err1bound} and \eqref{Err2bound} and \eqref{SVKsecond} into  \eqref{S'sumErr} we obtain that
\begin{multline*}
S_{V,r}(K,X)\ll {q^{o(1)}}\frac{X}{L}r^{\theta_3}L^{\theta_2}+q^{o(1)+\theta_3}rZ^2\frac{XL}q\\+q^{o(1)}r^{1/2}\left(Z\frac{X^{3/4}q^{3/4}}{L^{1/4}}+Z^2L^{1/2}X^{1/2}q^{5/4}
+ L^{1/2}Xq^{-1/4}+ZL^{3/4}X^{3/4}q^{1/2}\right)	
\end{multline*}
and on taking $$L=q^{1/4}$$ to equate the first and fourth terms inside the parentheses we get
$$S_{V,r}(K,X)\ll q^{o(1)}r^{1/2}\left(Z{X^{3/4}q^{11/16}}+Z^{2}X^{1/2}q^{11/8}
+Xq^{-1/8}+q^{\theta_3}r^{1/2}{Z^2}{X}{q^{-3/4}}\right).$$
Replacing $X$ by $X/r^2$ and averaging this bound over $r\leq R$ we obtain
$$\sum_{r\leq R}S_{V,r}(K,X/r^2)\ll q^{o(1)}\left(Z{X^{3/4}q^{11/16}}+Z^{2}R^{1/2}X^{1/2}q^{11/8}
+Xq^{-1/8}+{Z^2}{X}{q^{\theta_3-3/4}}\right).$$
Combining this with \eqref{large-R}, we have
\begin{multline*}
\sum_{r\leq R}S_{V,r}(K,X/r^2)+\sum_{r\geq R}S_{V,r}(K,X/r^2)\\
\ll q^{o(1)}\left(Z{X^{3/4}q^{11/16}}+Z^{2}R^{1/2}X^{1/2}q^{11/8}
+Xq^{-1/8}+{Z^2}{X}{q^{\theta_3-3/4}}+XR^{\theta_3-1}\right),	
\end{multline*}
which upon choosing 
\begin{equation}\label{choice-of-R}
%R=\frac{X^{1/3}}{Zq^{11/12}}
R=\left(\frac{X}{Z^4q^{11/4}}\right)^{\frac{1}{3-2\theta_3}}
\end{equation}
to equate the second and the fifth terms
gives
$$S^t_V(K,X)\ll q^{o(1)}\left(Z{X^{3/4}q^{11/16}}+Z^{\frac{4(1-\theta_3)}{3-2\theta_3}}X^{\frac{2-\theta_3}{3-2\theta_3}}q^{\frac{11(1-\theta_3)}{4(3-2\theta_3)}}
+Xq^{-1/8}\right).$$
  
  This completes the proof of Theorem \ref{thmStVbound}.
  
  Plugging \eqref{choice-of-R} into our previous assumption $R<L=q^{1/4}$ in \eqref{Lbound} we see the restriction
  $$Z^4q^{11/4}<X<Z^4q^{\frac{7-\theta_3}{2}}.$$
  
 \section{Square-root cancellation for certain exponential sums} \label{sec-sqroot}

In this section we establish Proposition \ref{sqrootcancel}. Let $K$ be the trace function of an irreducible middle extension sheaf $\mcF$ on ${\mathbb P}_\Fq^1$ of conductor $C(\mcF)$. For $\alpha,\beta,\alpha',\beta'\in\Fqt$ we define
\begin{equation}\label{L-sum}
L(u;q):=\frac{1}{\sqrt{q}}\sum_{b(q)}\widehat{K}(b)e\left(\frac{\alpha\overline{(b+\beta u)}}{q}\right)=\frac{1}{\sqrt{q}}\sum_{a}K(a)\Kl_2(\alpha a;q)e(-\frac{\beta au}q),
\end{equation}
and define $L'(u;q)$ in the same way with $\alpha',\beta'$ instead. We also set
\begin{equation}\label{Zcompute}
Z(v)=\frac{1}{q^{1/2}}\sum_{x\in\Fqt}K(xv)\Kl_2(\alpha xv;q)\Kl_2(\beta x;q).	
\end{equation}
and define $Z'(v)$ likewise using $\alpha',\beta'$. In this section we provide bounds for the sums
\begin{equation}\label{Zcorrelation}
\sum_{v}Z(v)\ov{Z'(v-\delta)}	
\end{equation}
for $\delta\in\Fq$ with different methods depending on whether $\delta=0$ or not. 

We recall the key assumptions:
\begin{itemize}
\item (MO)  There is no $ \lambda \in \Fqt$ such that the geometric monodromy group of $\mcF$ has some quotient which is equal, as a representation of $\pi_1$ into an algebraic group, to the geometric monodromy representation of $[\times \lambda]^*\KL_2$ modulo $\pm 1$.
\item (SL) The local monodromy representation of $\mcF$ at $\infty$ has no summand with slope $1/2$.
%(Rare)    $\mcF$ is lisse on $\Gm$, with the local monodromy at $0$ unipotent with blocks of size $2,3,$ and $4$, and all slopes of the local monodromy representation at $\infty$ at most  $1/2$. 
\end{itemize}

\subsection{The case $\delta=0$}

\begin{lemma} Assume $\mcF$ satisfies (MO). 
 Then 
 \[ L(u;q) =O (c(\mcF)^{O(1)}).\] 
 \end{lemma}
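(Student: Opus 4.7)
The plan is to interpret $L(u;q)$ as a sum of trace functions of an explicit $\ell$-adic sheaf and invoke Deligne's Riemann Hypothesis. From the second expression in \eqref{L-sum},
\[
\sqrt{q}\,L(u;q) \;=\; \sum_{a\in\Fq} K(a)\,\Kl_2(\alpha a;q)\,e(-\beta au/q)
\]
is, up to a boundary contribution at $a=0$ of size $O(c(\mcF)^{O(1)})$, the sum over $\Gm(\Fq)$ of the trace function of the middle-extension sheaf
\[
\mcH_u \;:=\; \mcF \otimes [\times\alpha]^{*}\KL_2 \otimes \mcL_{\psi(-\beta u\cdot)}.
\]
Standard tensor-product estimates give $c(\mcH_u)\ll c(\mcF)^{O(1)}$ uniformly in $(u,\alpha,\beta)$, and Grothendieck-Lefschetz combined with Deligne's theorem yields
\[
\sum_{a\in\Gm(\Fq)}t_{\mcH_u}(a)\;=\;\tr\bigl(\Frob\mid H^2_c(\mathbb{A}^1_{\ov\Fq},\mcH_u)\bigr)\;+\;O\bigl(c(\mcF)^{O(1)}\sqrt{q}\bigr).
\]
Since $\mcH_u$ is a pure middle extension it is geometrically semisimple, so the $H^2_c$ term vanishes as soon as $\mcH_u$ admits no geometrically trivial irreducible summand; dividing by $\sqrt{q}$ then gives the desired bound.

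The crux is thus to verify, using (MO), that $\mcH_u$ has no trivial summand for every $u\in\Fq$. By adjunction and the geometric self-duality $([\times\alpha]^{*}\KL_2)^\vee\simeq [\times\alpha]^{*}\KL_2$ (whose geometric monodromy lies in $SL_2$), the existence of such a summand is equivalent to the existence of a nonzero geometric morphism
\[
[\times\alpha]^{*}\KL_2\otimes \mcL_{\psi(\beta u\cdot)}\;\longrightarrow\; \mcF.
\]
The source is geometrically irreducible of rank $2$ and $\mcF$ is geometrically irreducible, so any such morphism is an isomorphism, giving $\mcF\simeq_{\mathrm{geom}} [\times\alpha]^{*}\KL_2\otimes \mcL_{\psi(\beta u\cdot)}$. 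Tensoring by the rank-one sheaf $\mcL_{\psi(\beta u\cdot)}$ only multiplies the monodromy representation by a scalar character, so the images of $\rho_\mcF$ and of $\rho_{[\times\alpha]^{*}\KL_2}$ in $\PGL_2$ coincide. Composing $G^{\mathrm{geom}}_\mcF\hookrightarrow \GL_2\twoheadrightarrow \PGL_2$ then exhibits a quotient of $G^{\mathrm{geom}}_\mcF$ equal, as a representation of $\pi_1$, to the geometric monodromy group of $[\times\alpha]^{*}\KL_2$ modulo $\pm 1$---precisely what (MO) forbids with $\lambda=\alpha$.

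The only anticipated obstacles are routine: (i) the uniform conductor estimate $c(\mcH_u)\ll c(\mcF)^{O(1)}$, which follows from standard bounds on tensor products applied to $\mcF$, $\KL_2$ and the Artin-Schreier sheaf; (ii) the handling of $a=0,\infty$ where $\KL_2$ is singular (negligible boundary terms); and (iii) confirming that $[\times\alpha]^{*}\KL_2\otimes\mcL_{\psi(\beta u\cdot)}$ is geometrically irreducible, which is immediate since both factors are. All the substantive input is concentrated in (MO), which is tailor-made to exclude the only possible obstruction to ``no trivial summand''.
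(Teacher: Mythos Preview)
Your proof is correct and follows essentially the same approach as the paper's: both recognize $L(u;q)$ as a normalized sum of the product of the trace functions of the irreducible sheaves $\mcF$ and $[\times\alpha]^*\KL_2\otimes\mcL_{\psi(-\beta u\cdot)}$, apply Deligne's theorem to get square-root cancellation unless $\mcF$ is geometrically isomorphic to $[\times\alpha]^*\KL_2\otimes\mcL_{\psi(\beta u\cdot)}$, and note that this exceptional case is ruled out by (MO). The paper compresses this into two lines (``by Deligne's theorem, because $\mcF$ and $\KL_2(\alpha a)\otimes\mcL_\psi(\beta a)$ are irreducible, this cancellation holds unless $\mcF$ is geometrically isomorphic to $\KL_2(\alpha a)\otimes\mcL_\psi(\beta a)$, which clearly violates (MO)''), whereas you unpack the cohomological mechanism via $H^2_c$ and spell out explicitly why the isomorphism forces a $\PGL_2$-quotient of $G^{\mathrm{geom}}_\mcF$ matching that of $[\times\alpha]^*\KL_2$ modulo $\pm1$. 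One minor slip: after restricting to $a\in\Gm(\Fq)$ you should write $H^2_c(\Gm_{\ov\Fq},\mcH_u)$ rather than $H^2_c(\mathbb{A}^1_{\ov\Fq},\mcH_u)$, but this is inconsequential since both detect the same geometrically constant constituents.
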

 
 \begin{proof} By Deligne's theorem, because $\mcF$ and $\HYPK_2 ( \alpha a ) \otimes \mathcal L_\psi( \beta a)$ are irreducible, this cancellation holds unless $\mcF$ is geometrically isomorphic to $\HYPK_2 ( \alpha a ) \otimes \mathcal L_\psi( \beta a)$, which clearly violates assumption (MO). \end{proof} 
 
\begin{lemma} We have \[\left| \sum_u L(u ;q)   \right| \leq c(\mcF)q^{1/2}. \]\end{lemma}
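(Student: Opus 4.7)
My strategy is to exchange the order of summation and exploit orthogonality of the additive characters modulo $q$. The key observation is that the second expression for $L(u;q)$ displayed in \eqref{L-sum}, namely
\[
L(u;q) = \frac{1}{q^{1/2}} \sum_{a \in \Fq} K(a) \Kl_2(\alpha a; q)\, e\!\left(-\frac{\beta a u}{q}\right),
\]
has the virtue of separating the $u$-dependence into a single clean additive character $e(-\beta a u/q)$. Interchanging the $u$-summation with the $a$-summation therefore puts a complete additive character sum $\sum_{u \pmod q} e(-\beta a u /q)$ on the inside, which is ready to be collapsed by orthogonality.

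By orthogonality, this inner sum vanishes unless $\beta a \equiv 0 \pmod q$, in which case it contributes $q$. Since $\beta \in \Fqt$, only the term $a = 0$ survives, yielding
\[
\sum_{u \pmod q} L(u;q) \;=\; q^{1/2}\, K(0)\, \Kl_2(0;q).
\]
To conclude, I would invoke the general fact that the values of a trace function of a middle extension sheaf are bounded pointwise in terms of the rank, which in turn is controlled by the conductor; thus $|K(0)| \ll c(\mcF)$, while $|\Kl_2(0;q)|$ is bounded by an absolute constant (indeed, with the convention of the hyper-Kloosterman formula in the introduction, where one sums over $x_1x_2=0$ with $x_i \in \Fqt$, the value is simply zero). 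Either way, combining these bounds gives the desired estimate $O(c(\mcF) q^{1/2})$.

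The argument is entirely elementary and I foresee no real obstacle. The only point requiring a bit of care is the choice of representation of $L(u;q)$: one should work with the Fourier-expanded form in \eqref{L-sum} rather than the defining form $\frac{1}{q^{1/2}} \sum_b \widehat{K}(b) e(\alpha\overline{b+\beta u}/q)$, since in the latter the $u$-dependence sits inside the inverse $\overline{b+\beta u}$ and orthogonality is not directly applicable. Once this choice is made, the whole proof is a two-line manipulation.
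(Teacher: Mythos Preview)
Your proposal is correct and is essentially identical to the paper's proof: the paper also swaps the $u$- and $a$-sums in the second expression of \eqref{L-sum}, collapses the $u$-sum by orthogonality to the single term $a=0$, obtains $\sum_u L(u;q)=q^{1/2}K(0)\Kl_2(0;q)$, and concludes via $|K(0)|\leq c(\mcF)$. Your remark that $\Kl_2(0;q)=0$ under the paper's stated convention (so the sum actually vanishes) is a fair observation; the paper instead writes this term as $K(0)q^{1/2}$, but either reading yields the claimed bound.
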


\begin{proof} \[ \sum_u L(u ;q) =q^{-1/2} \sum_u \sum_a K(a)\Kl_2(\alpha a;q)e(-\frac{\beta a u}q) = q^{1/2} K(0) \Kl_2(0)= K(0)q^{1/2} \]
and \[ | K(0)| \leq c(\mcF).\] \end{proof} 
For the third Lemma we need to define the following group
 \begin{equation}\label{deftorusF}
 T_\mcF(\Fq)=\{\lambda\in\Fqt,\ [\times\lambda]^*\mcF\hbox{ is geometrically isomorphic to }\mcF\}.
 \end{equation}
\begin{lemma} Suppose $\mcF$ satisfies (MO). Then
\[ \sum_{u(q)} L(u ;q)\ov{L'(u ;q)}  = O ( c(\mcF)^{O(1)} q^{1/2}) \]
unless $$\alpha/\alpha'=\beta/\beta'\in T_\mcF(\Fq).$$

In that later case we have
\[ \sum_{u(q)} L(u ;q)\ov{L'(u ;q)}  =c_\mcF(\alpha/\alpha')q+ O ( c(\mcF)^{O(1)} q^{1/2}) \]
for $c_\mcF(\alpha/\alpha')$ a complex number of modulus $1$. In particular if $T_\mcF(\Fq)=\{1\}$ we have
\[ \sum_{u(q)} L(u ;q)\ov{L'(u ;q)}  =\delta_\stacksum{\alpha=\alpha'}{\beta=\beta'}q+ O ( c(\mcF)^{O(1)} q^{1/2}). \]
\end{lemma}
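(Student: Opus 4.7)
The plan is to apply Plancherel to convert $\sum_u L(u;q)\ov{L'(u;q)}$ into the sum over $\Fq$ of a trace function, then invoke Deligne's Riemann Hypothesis, and finally extract the main term via a Goursat/Schur analysis driven by assumption $(MO)$. From \eqref{L-sum} the function $u\mapsto L(u;q)$ is, up to the linear change $u\leftrightarrow \beta u$, the discrete Fourier transform of $a\mapsto K(a)\Kl_2(\alpha a;q)$. Plancherel's identity, together with the rescaling $a\mapsto a/\lambda$ to absorb the mismatch between $\beta$ and $\beta'$, would give
\[ \sum_{u\in\Fq} L(u;q)\ov{L'(u;q)} = \sum_{c\in\Fq} K(\lambda c)\Kl_2(\alpha\lambda c;q)\ov{K(c)\Kl_2(\alpha' c;q)}, \qquad \lambda:=\beta'/\beta. \]
The right-hand side is the $\Fq$-sum of the trace function of the middle-extension sheaf
\[ \mcG := [\times\lambda]^*\mcF\otimes[\times\alpha\lambda]^*\KL_2\otimes\ov{\mcF}\otimes[\times\alpha']^*\ov{\KL_2} \]
on $\mathbb{A}^1_\Fq$, which is pure of weight $0$ with conductor polynomial in $c(\mcF)$.

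Next I would invoke the Grothendieck--Lefschetz trace formula combined with Deligne's Riemann Hypothesis to write
\[ \sum_{c\in\Fq} \tr(\Frob_c \mid \mcG) = q\cdot \tr\bigl(\Frob\mid\mcG^{\pi_1^{\mathrm{geom}}}\bigr)+O\bigl(c(\mcF)^{O(1)}q^{1/2}\bigr), \]
with the main term coming from $H^2_c$, which by duality is the space of geometric invariants. The crucial input is then $(MO)$: the only non-abelian proper quotient of $G_{\KL_2}=SL_2$ is $PSL_2$, and $(MO)$ precisely prevents $G_\mcF$ from admitting any quotient matching the monodromy of some $[\times c]^*\KL_2$ modulo $\pm 1$. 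Goursat's lemma would therefore force the geometric monodromy image of $\mcG$ to be the full product of its $\mcF$-part and its $\KL_2$-part, and the invariants decouple as
\[ \mcG^{\pi_1^{\mathrm{geom}}} \simeq \Hom_{\mathrm{geom}}(\mcF,[\times\lambda]^*\mcF)\otimes \Hom_{\mathrm{geom}}([\times\alpha']^*\KL_2,[\times\alpha\lambda]^*\KL_2). \]

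By Schur's lemma applied to the irreducible $\mcF$, the first factor is one-dimensional iff $[\times\lambda]^*\mcF\simeq\mcF$ geometrically, i.e.\ $\lambda\in T_\mcF(\Fq)$; since the analogous dilation torus of $\KL_2$ is trivial (a theorem of Katz), the second factor is one-dimensional iff $\alpha'=\alpha\lambda$, and vanishes otherwise. Setting $\mu:=1/\lambda$, these combine into the single condition $\alpha/\alpha'=\beta/\beta'=\mu\in T_\mcF(\Fq)$, outside of which $\mcG^{\pi_1^{\mathrm{geom}}}=0$ and the sum is $O(c(\mcF)^{O(1)}q^{1/2})$. When the condition holds, the one-dimensional invariant line carries a scalar Frobenius action $c_\mcF(\mu)$, which has modulus $1$ by the weight-$0$ purity of $\mcG$ (the eigenvalues of $\Frob$ on $H^2_c$ then have absolute value exactly $q$), yielding the main term $c_\mcF(\mu)q$.

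The most delicate point is the Goursat splitting of $\mcG^{\pi_1^{\mathrm{geom}}}$: one must verify that $(MO)$ genuinely excludes \emph{every} common non-trivial quotient between $G_\mcF$ and $SL_2$, including those arising modulo $\pm 1$, which is precisely how $(MO)$ is phrased; only then do the invariants factor cleanly as a tensor product of the two irreducible-versus-irreducible Hom spaces.
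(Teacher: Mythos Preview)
Your proof is correct and follows essentially the same route as the paper: apply Plancherel to reduce to a character-sum over $\Fq$ of a tensor-product trace function, invoke Deligne for the $O(q^{1/2})$ error, and analyze the $H^2_c$ main term via monodromy under (MO). The only cosmetic difference is that you phrase the monodromy step as a Goursat decoupling of invariants into $\Hom(\mcF,[\times\lambda]^*\mcF)\otimes\Hom(\KL_2,\KL_2')$, whereas the paper equivalently writes the invariants as $\Hom([\times\beta^{-1}]^*\mcF\otimes[\times\beta'^{-1}]^*\mcF^\vee,\ [\times\alpha/\beta]^*\KL_2\otimes[\times\alpha'/\beta']^*\KL_2)$ and argues that any shared nontrivial irreducible constituent would force a $PSL_2$ quotient of $G_\mcF$ (via one of the two $\mcF$-factors, since $PSL_2$ is simple), contradicting (MO); this last ``simple group'' step is exactly what underlies your Goursat splitting between the $\mcF$-pair monodromy $H_V\subset G_\mcF\times G_\mcF$ and the $\KL_2$-pair monodromy.
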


\begin{proof} By Plancherel formula the sum equals
\begin{equation}\label{Lcorsum}
\sum_{a\mods q}K(\beta^{-1}a)\Kl_2((\alpha/\beta) a;q)\ov{K(\beta'^{-1}a)\Kl_2((\alpha'/\beta') a;q)}	
\end{equation}

Square-root cancellation now follows unless there is a nontrival map from $[\times\beta^{-1}]^*\mcF \otimes \mathcal [\times\beta'^{-1}]^*\mcF^\vee$ to $[\times\alpha/\beta]^*\HYPK_2 \otimes [\times\alpha'/\beta']\HYPK_2 $. If a nontrivial map exists, then an irreducible component on the left side must be isomorphic to an irreducible component on the right.

If the irreducible component on the right is trivial, then there is a nontrivial map from  $[\times\beta^{-1}]^*\mcF \otimes \mathcal [\times\beta'^{-1}]^*\mcF^\vee$  to the constant sheaf $\Ql$, hence a nontrivial map from $[\times\beta^{-1}]^*\mcF$ to $[\times\beta'^{-1}]^*\mcF$, which must be an isomorphism because both sides are irreducible, giving $\beta/\beta'=\lambda$. Furthermore, because $\Ql$ appears on the other side and $\KL_2$ is not geometrically isomorphic to any non-trivial multiplicative twist of itself \cite{GKM}, we must have $\alpha/\beta = \alpha'/\beta'$ so $\alpha/\alpha'=\beta/\beta'=\lambda$. 

If the irreducible component on the right is nontrivial, then its monodromy must be a nontrivial quotient of the monodromy of the product of two $\HYPK_2$, which is either $SL_2$ or $SL_2 \times SL_2$ depending on whether the two copies are isomorphic or not (by the Goursat-Kolchin-Ribet criterion in the later case). In either case, it has a further quotient equal to $PGL_2$,  with $\pi_1$ acting by its geometric monodromy action on one of the Kloosterman sheaves, modulo $\pm 1$. This quotient must also appear, as a $\pi_1$-representation, as a quotient of the monodromy on the left side. Because it is a simple group, it must appears as a quotient of the monodromy of either  $[\times\beta^{-1}]^*\mcF $ or $ [\times\beta'^{-1}]^*\mcF$. This implies that assumption (MO) is violated. \end{proof}

\subsection{The case $\delta\not=0$}

\begin{proposition}\label{sqrootcanceldeltanot=0}\label{hardest-exponential-sum} Assume that $\mcF$ satisfies both (MO) and (SL). Then for any $\alpha,\beta,\alpha',\beta',\delta\in\Fqt$ we have
\begin{equation}\label{deltanot=0sqbound}
\sum_{v}Z(v)\ov{Z'(v-\delta)}=O(q^{1/2})	
\end{equation}
Here the implicit constants depend only on $C(\mcK)$.  
\end{proposition}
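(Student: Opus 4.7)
The plan is to realise $Z$ and $Z'$ as trace functions of geometrically irreducible $\ell$-adic sheaves $\mcZ,\mcZ'$ on $\Gm$ and then to show that, for $\delta \neq 0$, $\mcZ$ and the translate $[v\mapsto v-\delta]^*\mcZ'$ share no common geometric irreducible constituent. Given this, Deligne's Riemann hypothesis (together with Betti-number control via Euler--Poincar\'e, depending only on $C(\mcF)$) yields $\sum_v Z(v)\,\overline{Z'(v-\delta)} \ll q^{1/2}$. The substitution $w=xv$ rewrites
\[ Z(v) = q^{-1/2}\sum_{w\in\Fqt} \mcK(w)\,\Kl_2(\beta w/v;q), \]
where $\mcK$ is the trace function of $\mcK_1 := \mcF\otimes [\times\alpha]^*\KL_2$, and exhibits $\mcZ$ (up to Tate twist) as the middle-extension multiplicative convolution $\mcK_1 \star_{\mathrm{mid}} \mcL$ with $\mcL := [y\mapsto \beta/y]^*\KL_2$; analogously for $\mcZ'$.

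Next I would establish geometric irreducibility. Assumption (MO) prevents $\mcF$ from being geometrically a multiplicative pull-back of $\KL_2$ twisted by a rank-one sheaf, so $\mcK_1$ is geometrically irreducible and in particular not Kummer-induced. Middle convolution with the irreducible Kloosterman-type sheaf $\mcL$ then preserves irreducibility by Katz's theory of multiplicative convolution, so $\mcZ$ and $\mcZ'$ are geometrically irreducible (and of the same rank).

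To rule out $\mcZ \cong_{\mathrm{geom}} [v\mapsto v-\delta]^*\mcZ'$, I first compare singular supports. The general analysis of multiplicative convolution places the singular set of $\mcZ$ in $\mathbf{P}^1$ inside $\{0,\beta/\alpha,\infty\}$, and the singular set of $[v\mapsto v-\delta]^*\mcZ'$ inside $\{\delta,\delta+\beta'/\alpha',\infty\}$. Equality of the finite parts, combined with $\delta\ne 0$, forces the exceptional configuration $\beta/\alpha = \delta$ and $\beta'/\alpha' = -\delta$; outside this configuration the sheaves cannot be isomorphic already at the level of singular support, and the desired bound is immediate from Deligne.

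The main obstacle is this exceptional case. Here I would compute the local representations of $\mcZ$ at $0$ and at $\beta/\alpha$ via Rojas-Le\'on's local convolution formalism: each such local representation is built out of products of the local representations of $\mcK_1$ and $\mcL$ at $0$ and $\infty$. Because $\mcL$ has slope exactly $1/2$ at $0$, hypothesis (SL)---that $\mcF$ has no slope-$1/2$ summand at $\infty$---prevents any cancellation between the wild parts, so the slopes and Swan conductors of $\mcZ$ at $0$ and $\beta/\alpha$ are pinned down in terms of $\alpha$, $\beta$ and the local structure of $\mcF$ at $\infty$. Matching these invariants against the corresponding local data of $[v\mapsto v-\delta]^*\mcZ'$ at the same two points produces an incompatibility unless parameters align in a way that ultimately would force $\mcF$ to be a multiplicative translate of $\KL_2$ up to a rank-one twist---precisely the situation excluded by (MO). Deligne's semi-continuity theorem is used to compare Swan conductors globally and to exclude parasitic contributions. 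Once geometric non-isomorphism is confirmed, Deligne's theorem applied to the correlation sum delivers the stated bound $O(q^{1/2})$, with implicit constant controlled by $C(\mcF)$.
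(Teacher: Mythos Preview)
Your overall architecture matches the paper's: interpret $Z,Z'$ as trace functions of the convolution sheaves $\mcZ,\mcZ'$, prove geometric irreducibility under (MO), restrict the singular locus under (SL), and then distinguish $\mcZ$ from $[+\delta]^*\mcZ'$ by a local analysis. However, the substance of the argument lies entirely in this last step, and there your sketch has two genuine gaps.

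First, you compare singular supports by asserting that the finite singularities of $\mcZ$ lie in $\{0,\beta/\alpha\}$ and those of the translate in $\{\delta,\delta+\beta'/\alpha'\}$, and then conclude that the two sets must coincide. But containment is not equality: you have not shown that $\mcZ$ is \emph{genuinely} singular at $0$ (it could a priori be lisse there). The paper devotes a separate lemma to this (Lemma~\ref{he-nontrivial}), and it is not automatic: one runs an Euler-characteristic dichotomy on $\mcF$ (singularities on $\Gm$, non-unipotent or large unipotent blocks at $0$, slopes $>1/2$ at $\infty$) and in each case invokes Rojas-Le\'on's local convolution functors to exhibit a nontrivial wild or tame contribution at $0$. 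Without this, your reduction to a single ``exceptional configuration'' is unjustified. (In fact the paper only uses the singularity at $0$, not at $\beta/\alpha$, and deduces merely $-\delta=\beta'/\alpha'$.)

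Second, and more seriously, your proposed mechanism for the local comparison---that (SL) pins down slopes and Swan conductors at $0$ and $\beta/\alpha$, and that a mismatch would force $\mcF$ to be a rank-one twist of a translate of $\KL_2$, excluded by (MO)---does not work as stated. In the problematic regime the local monodromy of $\mcZ$ at $0$ turns out to be \emph{unipotent} (hence tame, with Swan conductor zero), so slopes and Swan conductors carry no information there. The paper's actual argument is considerably more delicate: it computes the \emph{dimension of the local monodromy invariants} of $\mcZ$ at $0$ and of $\mcZ'$ at $\beta'/\alpha'$ explicitly in terms of the unipotent block structure of $\mcF$ at $0$ and its slope decomposition at $\infty$ (Lemmas~\ref{zero-monodromy} and~\ref{nonzero-monodromy}); an Euler-characteristic inequality (Lemma~\ref{invariants-equal}) then shows that equality of these dimensions forces $\mcF$ to be lisse on $\Gm$, unipotent at $0$ with blocks only of sizes $2,3,4$, all $\infty$-slopes $<1/2$, and $\chi(\Aa^1,\mcF)=0$. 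In that narrow regime one proves the local monodromy of $\mcZ$ at $0$ is unipotent (Lemma~\ref{unipotents-zero}) while that of $\mcZ'$ at $\beta'/\alpha'$ is \emph{not} (Lemma~\ref{not-unipotent}, via a further local-convolution computation that inverts the convolution with $\mcL'$), and the final contradiction is that $\mcF$ would have to be the \emph{trivial} sheaf---so the obstruction is to $\mcF$ being Fourier, not to (MO).
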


We will prove this in several steps, using a series of lemmas. Let us first observe that by \eqref{Zcompute}, $Z(v)$ is the trace function of the sheaf 
\begin{equation}\label{mcZdef}
\mcZ:=(\mcF \otimes [\times \alpha]^*\HYPK_2 ) \star [y\ra\beta/y]^*\HYPK_2	
\end{equation} where $\star$ is multiplicative convolution. Our lemmas will focus mostly on understanding the geometry of this sheaf convolution and to compare with the sheaf $\mcZ'$ defined using the parameters $\alpha',\beta'$.
 We denote
 $$\mcK=\mcF \otimes [\times \alpha]^*\HYPK_2,\ \mcL=[y\ra\beta/y]^*\HYPK_2$$
 and $$\mcK'=\mcF \otimes [\times \alpha']^*\HYPK_2,\ \mcL'=[y\ra\beta'/y]^*\HYPK_2.$$

\begin{lemma}\label{he-irreducibility} If $\mcF$ satisfies (MO), then $\mcZ$ is geometrically irreducible \end{lemma}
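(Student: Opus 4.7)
The goal is to show that $\mcZ = \mcK \star \mcL$ is geometrically irreducible, with $\mcK = \mcF \otimes [\times\alpha]^*\HYPK_2$ and $\mcL = [y\mapsto \beta/y]^*\HYPK_2$. The plan is to proceed in two stages: first prove that $\mcK$ is geometrically irreducible using assumption (MO) via Goursat's lemma, and then invoke Katz's theorem on middle multiplicative convolution to pass irreducibility to $\mcZ$.

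For the first stage, as a representation of $\pi_1^{\mathrm{geom}}$, the sheaf $\mcK$ corresponds to $W \otimes V$, where $W$ is the standard representation of $G_\mcF^{\mathrm{geom}}$ and $V$ is the standard representation of $\SL_2 = G^{\mathrm{geom}}_{[\times \alpha]^*\HYPK_2}$. Let $H \subset G_\mcF^{\mathrm{geom}} \times \SL_2$ be the image of $\pi_1^{\mathrm{geom}}$; tautologically $H$ projects surjectively onto each factor. Goursat's lemma then furnishes normal subgroups $N_1 \triangleleft G_\mcF^{\mathrm{geom}}$, $N_2 \triangleleft \SL_2$ and a $\pi_1$-equivariant isomorphism $G_\mcF^{\mathrm{geom}}/N_1 \simeq \SL_2/N_2$. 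The only possibilities for $N_2$ are $\{1\}$, $\{\pm 1\}$, or $\SL_2$; the last yields $H = G_\mcF^{\mathrm{geom}} \times \SL_2$, which is what we need. The first two produce a quotient of $G_\mcF^{\mathrm{geom}}$ equal, as a $\pi_1$-representation, to either $\SL_2$ or $\PGL_2$, matching the Kloosterman monodromy of $[\times\alpha]^*\HYPK_2$ (possibly reduced modulo $\pm 1$). Since an $\SL_2$-quotient further reduces to a $\PGL_2$-quotient, both cases contradict (MO). Hence $H = G_\mcF^{\mathrm{geom}} \times \SL_2$, and $W \otimes V$ is irreducible as the external tensor of two irreducibles.

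For the second stage, shifting to the perverse normalization, $\mcK[1]$ and $\mcL[1]$ are irreducible perverse sheaves on $\mathbb{G}_m$ of generic rank $\geq 2$, so neither is a Kummer sheaf. By a theorem of Katz (\cite{ESDE}), the category of perverse sheaves on $\mathbb{G}_{m, \bFq}$ modulo Kummer sheaves, equipped with middle multiplicative convolution $\star_{\mathrm{mid}}$, is a neutral Tannakian category; in particular, the middle convolution of two irreducible non-Kummer perverse sheaves is either zero or irreducible. Applying this to $\mcK[1]$ and $\mcL[1]$ yields an irreducible perverse sheaf whose middle-extension part is $\mcZ$; the identification is immediate via the change of variable $y = xv$ in \eqref{Zcompute}, with the $q^{-1/2}$ normalization encoding the perverse shift.

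The main obstacle lies in Step 1: one must verify that Goursat's lemma applies at the level of Zariski-closed images of $\pi_1^{\mathrm{geom}}$ in the product of algebraic groups, and that the comparison of quotients ``as $\pi_1$-representations'' in (MO) matches the data furnished by Goursat. The ``modulo $\pm 1$'' clause in (MO) is precisely what is needed to absorb the ambiguity between $\SL_2$- and $\PGL_2$-quotients. Step 2 is then a black-box application of Katz's convolution machinery, requiring only the non-Kummer hypothesis, which follows trivially from the rank being at least $2$.
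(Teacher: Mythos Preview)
Your first stage is correct and matches the paper's argument: Goursat's lemma together with (MO) forces the monodromy of $\mcK=\mcF\otimes[\times\alpha]^*\HYPK_2$ to be the full product $G_\mcF^{\mathrm{geom}}\times\SL_2$, hence $\mcK$ is geometrically irreducible.

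Your second stage, however, contains a genuine error. The claim ``the middle convolution of two irreducible non-Kummer perverse sheaves is either zero or irreducible'' is false, and it is \emph{not} a consequence of the Tannakian structure. In any Tannakian category the tensor product of two irreducible objects is typically reducible (think of $V\otimes V^\vee$ for $V$ of dimension $>1$, which always contains the unit object). So ``non-Kummer'' is not the relevant hypothesis on $\mcL$.

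What the paper actually uses is that $[y\mapsto\beta/y]^*\HYPK_2$ is an object of \emph{Tannakian dimension $1$} in Katz's convolution category; equivalently, it is invertible, and convolution with an invertible object is an autoequivalence, hence preserves irreducibility. (Concretely, the Tannakian dimension is $-\chi(\Gm,\cdot)$, and for $\HYPK_2$ this equals the Swan conductor at the wild point, which is $1$.) The paper also offers an alternative route: write multiplicative convolution with $[y\mapsto\beta/y]^*\HYPK_2$ as the composite of Fourier transform, the inversion $y\mapsto y^{-1}$, and another Fourier transform, each of which preserves geometric irreducibility. Either of these would repair your Step~2; your current justification does not.
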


\begin{proof} It follows from Goursat's lemma that, if $\mcF$ satisfies (MO), the monodromy of $\mcF \otimes [\times \alpha]^*\HYPK_2 $ is the product of the monodromy group of $\mcF$ and the monodromy group of $[\times \alpha]^*\HYPK_2$. Because it is a tensor product of irreducible representations, it is an irreducible representation of the product group. Then because $[y\ra\beta/y]^*\HYPK_2$ is an object of dimension $1$ in the sheaf convolution Tannakian category, convolving with it preserves irreducibility.  (Alternately, we can view the convolution as a Fourier transform, change of variables $y \to y^{-1}$, then another Fourier transform, and each of these steps preserve irreducibility.) \end{proof}

It follows immediately  from Lemma \ref{he-irreducibility} that, if $\mcF$ satisfies (MO), then the bound of  \eqref{deltanot=0sqbound} holds unless $\mcZ'$ is geometrically isomorphic to $[+\delta]^* \mcZ $. So this case is what we will focus on eliminating.

\begin{lemma}\label{he-lisse} If $\mcF$ satisfies (SL), then $\mcZ$ is lisse away from $\{0, \beta/\alpha, \infty\}$ \end{lemma}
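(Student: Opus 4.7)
The plan is to realise $\mcZ$ as a relative cohomology and to apply Deligne's semi-continuity theorem. The identity $[x\mapsto v/x]^*[y\mapsto\beta/y]^*\HYPK_2=[\times(\beta/v)]^*\HYPK_2$ rewrites the multiplicative convolution as the family
\begin{equation*}
\mcZ_v=R\Gamma_c\bigl(\Gm,\mcN_v\bigr),\qquad \mcN_v:=\mcF\otimes[\times\alpha]^*\HYPK_2\otimes[\times(\beta/v)]^*\HYPK_2,
\end{equation*}
parametrised by $v\in\Gm$. By proper base change and Deligne's semi-continuity theorem, $\mcZ$ is lisse on the (open) $v$-locus where $\chi_c(\Gm,\mcN_v)$ is locally constant; the goal is to show this locus contains $\Gm\setminus\{\beta/\alpha\}$.

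The potential singularities of $\mcN_v$ in $\mathbb{P}^1$ lie in $\{0,\infty\}\cup(\Sing(\mcF)\cap\Gm)$, since both Kloostermans are lisse on $\Gm$. At each finite $s\in\Sing(\mcF)\cap\Gm$ the Kloostermans contribute only a rank-$4$ lisse factor, so the drop and Swan of $\mcN_v$ at $s$ are four times those of $\mcF$, independent of $v$. At $0$ each Kloosterman has (tame) unipotent monodromy, so $\swan_0(\mcN_v)=4\,\swan_0(\mcF)$, again independent of $v$. All of the $v$-dependence of $\chi_c$ is therefore concentrated in $\swan_\infty(\mcN_v)$.

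At $\infty$ each Kloosterman is the pushforward along $[2]\colon\Gm\to\Gm,\ y\mapsto y^2$, of a rank-$1$ Artin--Schreier--Kummer sheaf whose exponent is linear in $y=\sqrt{x}$. By the projection formula and the identity $[2]^*[2]_*=\mathrm{id}\oplus\sigma^*$ (with $\sigma(y)=-y$), the product $[\times\alpha]^*\HYPK_2\otimes[\times(\beta/v)]^*\HYPK_2$ at $\infty$ splits as $[2]_*\bigl(\mcL_{\psi(2(\sqrt\alpha+\sqrt{\beta/v})y)}\oplus\mcL_{\psi(2(\sqrt\alpha-\sqrt{\beta/v})y)}\bigr)$, twisted by a tame factor. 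Each summand is pure of slope $1/2$ in $x$ unless its exponent vanishes, which happens exactly when $v=\beta/\alpha$. For $v\in\Gm\setminus\{\beta/\alpha\}$ the tensor product is therefore pure of slope $1/2$ and rank $4$, whereas at $v=\beta/\alpha$ it degenerates to a slope-$1/2$ rank-$2$ piece plus a tame rank-$2$ piece. Invoking (SL) to decompose $\mcF|_\infty=\mcF_{<1/2}\oplus\mcF_{>1/2}$ with no slope-$1/2$ part, and using that the slope of a tensor product of two isotypic pieces of distinct slopes is the larger of the two, the Swan contribution at $\infty$ from $\mcF_{>1/2}$ equals $4\,\swan_\infty(\mcF_{>1/2})$ for all $v\in\Gm$, while the contribution from $\mcF_{<1/2}$ equals $2\,\rk(\mcF_{<1/2})$ throughout $\Gm\setminus\{\beta/\alpha\}$. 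Hence $\chi_c(\Gm,\mcN_v)$ is locally constant on $\Gm\setminus\{\beta/\alpha\}$, and $\mcZ$ is lisse on $\mathbb{P}^1\setminus\{0,\beta/\alpha,\infty\}$.

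The main difficulty is exactly this slope analysis at $\infty$: without (SL), a slope-$1/2$ summand of $\mcF$ could cancel against a piece of the slope-$1/2$ Kloosterman tensor at some $v\neq\beta/\alpha$, causing the Swan conductor (and hence the Euler characteristic) to jump at that additional $v$ and introducing extra potential singularities of $\mcZ$. Hypothesis (SL) is precisely what rules out such resonances, so that the only new singular point produced by the convolution is $\beta/\alpha$.
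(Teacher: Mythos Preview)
Your argument is correct and is essentially the paper's: both apply Deligne's semicontinuity to the family $\mcN_v$ and isolate the unique jump of $\swan_\infty(\mcN_v)$ at $v=\beta/\alpha$ by splitting $\mcF|_\infty$ into its slope-$>1/2$ and slope-$<1/2$ parts via (SL), the only difference being that you compute the local monodromy of $[\times\alpha]^*\HYPK_2\otimes[\times(\beta/v)]^*\HYPK_2$ at $\infty$ explicitly through the $[2]_*$ description while the paper cites \cite[10.4.5]{GKM} for the same conclusion. One small point worth making explicit: your $R\Gamma_c(\Gm,\mcN_v)$ is the $!$-convolution, not the middle convolution $\mcZ$; the paper records (and you should too) that these differ by a lisse sheaf, so their lisse loci coincide.
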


\begin{proof} The middle convolution $\mcK \star \mcL $ is equal to the compactly supported convolution $\mcK \star_{!} \mcL$ up to a lisse sheaf, so it suffices to prove this for the compactly supported convolution. The compactly supported convolution is 
$$ R \pi_! ( (\mcF  \otimes [\times \alpha]^*\HYPK_2) \otimes [(x,v)\ra\beta x/v]^*\HYPK_2)$$
 where $\pi: \Gm \times \Gm \to \Gm$ is the projection from the torus with coordinates $(x,v)$ to the torus with coordinate $v$. To prove lisseness, we apply Deligne's semicontinuity theorem and examine the variation with $v$ of the Euler characteristic of the sheaf (in the $x$-variable) $(\mcF  \otimes [\times \alpha]^*\HYPK_2) \otimes [x\ra\beta x/v]^*\HYPK_2$

Let $V$ be an indecomposable summand of the local monodromy representation of $\mcF$ at $\infty$. By assumption (SL), the slope of $V$ is not  $1/2$.

If the slope of $V$ is $>1/2$, then $V \otimes [\times \alpha]^*\HYPK_2$ has the same slope, and thus for any given $v$, the sheaf in the $x$-variable, $(V  \otimes [\times \alpha]^*\HYPK_2) \otimes [x\ra\beta x/v]^*\HYPK_2$ has the same slope, which in particular is independent of $v$, so the contribution of this representation to the Swan conductor is constant.

If the slope of $V$ is $<1/2$, then the slope of $V  [\times \alpha]^*\HYPK_2$  is exactly $1/2$, and the slope of $(V  \otimes [\times \alpha]^*\HYPK_2) \otimes [x\ra\beta x/v]^*\HYPK_2$  is at most $1/2$. The slope of $(V  \otimes [\times \alpha]^*\HYPK_2) \otimes [x\ra\beta x/v]^*\HYPK_2$  is less than $1/2$ if and only if $ [\times \alpha]^*\HYPK_2 \otimes [x\ra\beta x/v]^*\HYPK_2$ has a summand of slope $<1/2$. By known properties of the Kloosterman sheaf \cite[10.4.5]{GKM}, this happens if and only if $\alpha = \beta v^{-1}$.

Because the Swan conductor at $\infty$ of $(\mcF \otimes [\times \alpha]^*\HYPK_2) \otimes [x\ra\beta x/v]^*\HYPK_2$ is constant away from $v=\beta/\alpha$, and its Swan conductor elsewhere is constant, by Deligne's semicontinuity theorem its cohomology is lisse (in the $v$ variable) away from $v=\beta/\alpha$. \end{proof}

\begin{lemma}\label{he-nontrivial} If $\mcF$ satisfies (SL), then $\mcK \star \mcL $ has a nontrivial singularity at zero. \end{lemma}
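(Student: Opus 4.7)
The plan is to show that the stalk at $v=0$ of the middle extension of $\mcZ = \mcK \star \mcL$ to $\A^1_v$ vanishes, while the generic rank of $\mcZ$ on $\Gm_v$ is positive; these together force nontrivial local monodromy at $v = 0$.

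Realize the convolution as a pushforward: for $? \in \{!, *\}$, write $\mcK \star_? \mcL \cong R^1 \pi_{2, ?}(\mcK \otimes \mu^* \mcL)$, where $\pi_2 \colon \Gm_x \times \Gm_v \to \Gm_v$ is the second projection and $\mu \colon \Gm_x \times \Gm_v \to \Gm_y$ is the map $(x, v) \mapsto v/x$. At $v = 0$, the restriction $\mu|_{\Gm_x \times \{0\}}$ is identically $y = 0$, so $\mu^*(j_{y,?}\mcL)|_{\Gm_x \times \{0\}}$ equals the constant sheaf with value $(j_{y,?}\mcL)_0$. This stalk is $0$ for $? = !$ by definition; for $? = *$ it equals $\mcL^{I_0}$, the inertia invariants at $y=0$. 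Since $\mcL = [y \to \beta/y]^*\HYPK_2$ has local monodromy at $y = 0$ equal to that of $\HYPK_2$ at $\infty$, which is irreducible of slopes $(1/2, 1/2)$ and hence carries no nonzero inertia invariants, we conclude $\mcL^{I_0} = 0$. Both $\mcK \star_! \mcL$ and $\mcK \star_* \mcL$ therefore have stalk $0$ at $v = 0$, and so does the middle convolution $\mcZ$ as the image of the former in the latter in the category of perverse sheaves.

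By Lemma \ref{he-irreducibility} together with the non-triviality of $\mcZ$ (which, under the standing assumptions (MO) and (SL), follows from the Euler--Poincar\'e rank computation $-\chi_c(\Gm_x, \mcK \otimes t_v^*\mcL) > 0$ for generic $v$, using the slope analysis of Lemma \ref{he-lisse}), the generic rank of $\mcZ$ on $\Gm_v$ is positive. Since the stalk of its middle extension at $v = 0$ equals $\mcZ^{I_0}$, the vanishing $\mcZ^{I_0} = 0$ combined with a nonzero generic stalk forces $I_0$ to act nontrivially on the generic fiber of $\mcZ$, which is exactly the statement that $\mcZ$ has a nontrivial singularity at $v = 0$. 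The main obstacle is the rigorous identification of the middle-extension stalk with the image of the $!$- and $*$-convolution stalks in this pushforward framework; this is standard in the perverse-sheaf formalism but requires careful bookkeeping to cleanly propagate the vanishing of $\mcL^{I_0}$ through the definition of the middle convolution.
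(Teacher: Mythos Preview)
Your plan aims to prove the stronger assertion $\mcZ^{I_0}=0$, but this is simply false under the hypothesis (SL). Take $\mcF=\HYPK_3$: its $\infty$-slopes are all $1/3$, so (SL) holds, and its local monodromy at $0$ is a single unipotent Jordan block of size $3$. Then $\mcK=\HYPK_3\otimes[\times\alpha]^*\HYPK_2$ has unipotent local monodromy at $0$ with blocks of sizes $4$ and $2$. Using the polynomial formalism $P_{\bullet,1}(T)$ for unipotent pieces (as in the paper's Lemma~\ref{zero-monodromy}), convolution with $\mcL$ multiplies by $T^{2}$, so the block of size $4$ in $\mcK$ yields a unipotent block of size $2$ in $\mcZ$ at $0$. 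Hence $\dim\mcZ^{I_0}\geq 1$, and your target conclusion $\mcZ^{I_0}=0$ cannot hold.

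The precise break in your argument is the $?=*$ step. You correctly observe that $\mu^*(j_{y,*}\mcL)$ vanishes on the fiber $\Gm_x\times\{0\}$ because $\mcL^{I_0}=0$; but you then need the stalk at $v=0$ of $R\pi_{2,*}(\mcK\otimes\mu^*(j_{y,*}\mcL))$ to vanish, and there is no base change theorem for $R\pi_{2,*}$ (the map $\pi_2$ is not proper). In fact, since $\mu^*(j_{y,*}\mcL)$ is supported on $\Gm_x\times\Gm_v$, the $*$-pushforward to $\Aa^1_v$ is essentially $Rj_{v,*}$ of the convolution on $\Gm_v$, whose stalk at $0$ is exactly the $I_0$-invariants you are trying to compute --- not the cohomology of the (zero) fiber. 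Your $?=!$ step, by contrast, only recovers that the $!$-extension has zero stalk at $0$, which is automatic and carries no information about the local monodromy.

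This is why the paper's proof is more elaborate: one cannot hope for $\mcZ^{I_0}=0$ in general, so one must instead exhibit, case by case, a specific nontrivial constituent (wild, or tame with a nontrivial character, or a sufficiently large unipotent block) of the local monodromy at $0$, using Rojas-Le\'on's local convolution functors applied to the various possible local behaviors of $\mcF$.
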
 

\begin{proof} Let us first check that, if $\mcF$ satisfies (SL), at least one of the following four conditions must be satisfied: 

\begin{enumerate}

\item $\mcF$ is singular at some point on $\Gm$.

\item The local monodromy representation of $\mcF$ at zero is not unipotent.

\item The local monodromy representation of $\mcF$ at zero has a Jordan block of size $\geq 3$.

\item The local monodromy representation of $\mcF$ at $\infty$ has a summand with slopes $>1/2$.  \end{enumerate}

This follows from an Euler characteristic calculation. Assume none of these happen. Because $\mcF$ is a nontrivial irreducible middle extension sheaf on $\mathbb A^1$, its Euler characteristic is nonpositive. Because $\mcF$ is unipotent at $0$, it is tamely ramified at $0$. Because $\mcF$ is lisse on $\Gm$ and tamely ramified at $0$, its Euler characteristic is its rank, minus its drop at $0$, minus its Swan conductor at $\infty$. Because all the local monodromy at $0$ is unipotent, with unipotent blocks of size at most $2$, its drop at $0$ is at most half the rank. Because its slopes at $\infty$ are $<1/2$, its Swan conductor at $\infty$ is less than half the rank. So the Euler characteristic is positive, giving a contradiction.

So at least one of (1) to (4) must happen. This imply corresponding conditions for $\mcK=\mcF \otimes [\times \alpha]^*\HYPK_2$. We must have either

\begin{enumerate}

\item $\mcK$ is singular at some point on $\Gm$.

\item The local monodromy representation of $\mcK$ at zero is not unipotent.

\item The local monodromy representation of $\mcK $ at zero has a Jordan block of size $\geq 4$.

\item The local monodromy representation of $\mcK$ at $\infty$ has a summand with slopes $>1/2$.  \end{enumerate}

These follow by straightforward arguments. The most subtle are that, in case (2), we must check that the tensor product of a non-unipotent representation with a nonzero unipotent representation is non-unipotent, and in case (3), we must check that a Jordan block of size $\geq 3$ tensored with a Jordan block of size $\geq 2$ produces a Jordan block of size $\geq 4$. Both follow from standard representation theory.

We will now show, in each of these cases, that $\mcK \star \mcL $ has nontrivial local monodromy at zero. In fact, we will break case (2) into the cases (2w) where the local monodromy at $0$ is wild and (2t) where the local monodromy at $0$ is tame but not unipotent. Rojas-Le\'on has shown \cite[Theorem 16]{arl} that the wild part of the local monodromy at $0$ of a sheaf convolution $\mcK \star \mcL$ is given by a sum of the values of certain functors applied to the local monodromy representations of $\mcK$ and $\mcL$ at different points. We will show, in cases (1), (2w), and (4), that one of those functors produces a nontrivial value on $\mcK $ and $ \mcL $ and thus the local monodromy representation of $\mcK \star \mcL $ at $0$ is wild. In cases (2t) and (3) we will show, using a different result of Rojas-Le\'on (encapsulating earlier work of Katz), that the local monodromy at 0 of $\mcK \star \mcL  $ contains a nontrivial tame component. In every case we will deduce it is nontrivial.

The functors appearing in \cite[Theorem 16]{arl} are defined by swapping $0$ and $\infty$ in the functors defined in \cite[Theorem 9]{arl}, which is harmless as $\Gm$ has a symmetry switching $0$ and $\infty$. Thus we will need to swap $0$ and $\infty$ when citing results from \cite{arl}.
\begin{enumerate}

\item Fix $s$ a singularity of $\mcK $ in $\Gm$. We must show that the function $\overline{\rho}_{(s,0)}$ applied to $(\mcK_{(s)}^{w},\mcL_{(0)}^w)$ is non-trivial. This follows from \cite[Proposition 12]{arl}, using the assumption that  $\mcK$ has a singularity at $s$, and the fact that $\mcL$ has non-trivial wild local monodromy at $0$.

\item[(2w)]   In this case we must show that the functor $\overline{\rho}_{(0,0)} $ applied to $(\mcK_{(0)}^{w}, \mcL_{(0)}^w )$ (the wild parts of the local monodromy representations of these two sheaves at $0$) is non-trivial. This follows from \cite[Proposition 11]{arl} which shows that this functor applied to any two nontrivial wild representations is nontrivial, our assumption that $\mcK$ has nontrivial wild local monodromy at $0$, and the fact that $\mcL$ has nontrivial wild local monodromy at $0$.

\item [(2t)] For each character $\chi$ of the tame fundamental group of $\Gm$, and for $U_k$ a unipotent Jordan block of size $k$, Rojas-Le\'on defines \cite[\S6]{arl} a polynomial $P_{\mcK,\chi}(T)$ associated to a middle extension sheaf $\mcK$, whose coefficient of $T^k$ is the multiplicity of $\mathcal L_\chi \otimes U_k$ as a direct summand of the local monodromy of $\mcK$ at $\infty$ if $k>0$ and the multiplicity of $\mathcal L_\chi \otimes U_{-k}$ as a direct summand of the local monodromy of $\mcK$ at $0$ if $k<0$, and such that $P_{\mcK,\chi}(1)$ is minus the Euler characteristic of $\mcK$. He proves that $P_{\mcK \star \mcL,\chi}(T) = P_{\mcK,\chi}(T) P_{\mcL,\chi}(T)$ \cite[Proposition 28]{arl}.

If the local monodromy representation of $\mcK $ at $0$ is tame but not unipotent, then it contains some $\mcL_\chi \otimes U_k$ for some $k>0$ and nontrivial character $\chi$. Thus $P_{ \mcK , \chi}(T)$ contains some term $T^{-k}$. Because $\chi$ is nontrivial, $\mathcal L_\chi$ does not appear in the local monodromy of $\mcL$ at $0$ or $\infty$, and so $P_{ \mcL, \chi}(T) = - \chi( \Gm,  \mcL)=1$. Thus $P_{ \mcK , \chi}(T) P_{ \mcL, \chi}(T)$ contains some term $T^{-k}$, so the local monodromy representation of $\mcK\star\mcL$ contains $\mathcal L_\chi \otimes U_k$ and is nontrivial .

\item[(3)]  Now taking $\chi$ to be trivial, we see that  $P_{ \mcK , 1}(T)$ contains some term of the form $T^{-k}$ for $k \geq 4$. On the other hand, because the local monodromy of $\mcL$ at $\infty$ is a unipotent block of size $2$,  $P_{ \mcL, 1}(T) = T^2$. Thus $P_{ \mcK , \chi}(T) P_{ \mcL, \chi}(T)$ contains some term of the form $T^{2-k}$ for $k \geq 4$, and thus the local monodromy representation of $\mcK\star\mcL$ contains $U_{k-2}$ and is nontrivial.

\item [(4)] In this case we must show that the functor $\overline{\rho}_{(\infty,0)} $ applied to $(\mcK_{(\infty)}^{w}, \mcL_{(0)}^w )$, the wild parts of the local monodromy representations of these two sheaves at $\infty$ and $0$ respectively, is nontrivial. This follow from \cite[Proposition 13]{arl} which implies (swapping $0$ and $\infty$) that $\overline{\rho}_{(\infty,0)}$ is nontrivial as soon as some slope $a$ of the first input is greater than some slope $b$ of the first input, our assumption that $\mcK$ has some slope $>1/2$ at $\infty$, and the fact that $\mcL$ has slope $1/2$ at $0$. 
\end{enumerate}

\end{proof}

 Let us adopt some notation. Let $U_k(\mcF)$ be the number of unipotent blocks of size $k$ in the local monodromy representations of $\mcF$ at $0$, let $\chi_{x}(\mcF)$ be the drop plus the Swan conductor at the point $x$, and let $N_s(\mcF)$ be the rank of the local monodromy representation of $\mcF$ at $\infty$ with slope $s$. 
 
\begin{lemma}\label{zero-monodromy} The dimension of the monodromy invariant subspace of  $\mcK\star\mcL$ at $0$ is   \[ \sum_{k \geq 2} U_k(\mcF) + \sum_{k \geq 4} U_{k} (\mcF) \] \end{lemma}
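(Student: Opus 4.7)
Each tame unipotent Jordan block of size $j$ in the local monodromy of $\mcK\star\mcL$ at $0$ contributes a one-dimensional invariant subspace, while wild and non-unipotent tame summands contribute none; so the quantity to compute is $\sum_{j\geq 1} U_j^0(\mcK\star\mcL)$. I shall extract these multiplicities from the Rojas-Le\'on polynomial identity $P_{\mcK\star\mcL, 1}(T) = P_{\mcK, 1}(T)\,P_{\mcL, 1}(T)$ at the trivial character $\chi=1$, the very identity already invoked in the proof of Lemma~\ref{he-nontrivial}, reading off the coefficient of $T^{-j}$.

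Since $[y\mapsto \beta/y]$ interchanges $0$ and $\infty$, the local representation of $\mcL = [y\mapsto\beta/y]^*\HYPK_2$ at $0$ is that of $\HYPK_2$ at $\infty$, pure of slope $1/2$, hence without tame unipotent summand; and its local representation at $\infty$ is that of $\HYPK_2$ at $0$, a single Jordan block $U_2$. Thus $P_{\mcL, 1}(T) = T^2$. For $\mcK = \mcF\otimes[\times\alpha]^*\HYPK_2$ at $\infty$, assumption (SL) forbids $\mcF$ from having a slope-$1/2$ summand; tensoring with the pure slope-$1/2$ summand of $[\times\alpha]^*\HYPK_2$ at $\infty$ therefore yields only strictly positive slopes, and in particular no tame unipotent contribution at $\infty$. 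At $0$, $[\times\alpha]^*\HYPK_2$ is tame equal to $U_2$, and only the unipotent direct summands of $\mcF|_{(0)}$ produce unipotent parts of $\mcK|_{(0)}$ (tensoring $\mathcal{L}_\chi\otimes U_k$ with $U_2$ preserves the factor $\mathcal{L}_\chi$ when $\chi\neq 1$, while wild stays wild). By the Clebsch--Gordan identity $U_k\otimes U_2 = U_{k-1}\oplus U_{k+1}$ (with the convention $U_0 := 0$), the multiplicity of $U_j$ in the unipotent part of $\mcK|_{(0)}$ is $U_{j-1}(\mcF) + U_{j+1}(\mcF)$, so the coefficient of $T^{-j}$ in $P_{\mcK, 1}(T)$ is $U_{j-1}(\mcF)+U_{j+1}(\mcF)$ for every $j\geq 1$.

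Multiplying by $P_{\mcL,1}(T) = T^2$ shifts $T^{-j-2}\mapsto T^{-j}$, so for $j\geq 1$ the coefficient of $T^{-j}$ in $P_{\mcK\star\mcL, 1}(T)$ is $U_{j+1}(\mcF) + U_{j+3}(\mcF)$. Summing over $j\geq 1$ gives
\begin{equation*}
\sum_{j\geq 1} U_j^0(\mcK\star\mcL) \;=\; \sum_{j\geq 1}\bigl(U_{j+1}(\mcF)+U_{j+3}(\mcF)\bigr) \;=\; \sum_{k\geq 2} U_k(\mcF) \;+\; \sum_{k\geq 4} U_k(\mcF),
\end{equation*}
as claimed. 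The crucial input is hypothesis (SL), which is precisely what prevents $\mcK$ from acquiring an unexpected tame unipotent summand at $\infty$; otherwise, such a summand would, via multiplication by $T^2$, produce spurious invariants at $0$ and ruin the formula. Beyond this, the argument is straightforward bookkeeping with Clebsch--Gordan decomposition and the Rojas-Le\'on polynomial identity.
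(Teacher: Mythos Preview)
Your core argument is correct and matches the paper's: invariants at $0$ count unipotent Jordan blocks there, the Rojas-Le\'on identity $P_{\mcK\star\mcL,1}=P_{\mcK,1}\cdot P_{\mcL,1}$ with $P_{\mcL,1}(T)=T^2$ reduces the count to unipotent blocks of size $\geq 3$ of $\mcK$ at $0$, and Clebsch--Gordan $U_k\otimes U_2=U_{k+1}\oplus U_{k-1}$ converts this into the stated expression in $U_k(\mcF)$.

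However, your final paragraph contains a genuine misconception. You claim that (SL) is ``crucial'' because otherwise a tame unipotent summand of $\mcK$ at $\infty$ would, after multiplication by $T^2$, produce spurious invariants at $0$. This is false: such a summand contributes a term $T^m$ with $m\geq 1$ to $P_{\mcK,1}$, and $T^2\cdot T^m=T^{m+2}$ is still a positive power, encoding a unipotent block of $\mcK\star\mcL$ at $\infty$, not at $0$. Only terms $T^{-j}$ with $j\geq 3$ in $P_{\mcK,1}$ can land in negative degree after the shift; these come exclusively from unipotent blocks at $0$ of $\mcK$. So the lemma holds without (SL), which is why the paper states it with no hypothesis and proves it without invoking (SL). Your verification that $\mcK$ has no unipotent part at $\infty$ under (SL) is correct but superfluous here; drop the closing commentary and the proof is clean.
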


\begin{proof} The dimension of the monodromy invariants at $0$ is the number of unipotent blocks in the local monodromy representation at $0$, which is the sum of all the coefficients of negative powers of $T$ in 
$P_{ \mcK\star\mcL, 1}(T)$, which is the sum of all coefficients of powers $<-2$ of $T$ in $P_{ \mcK,1}(T)$, which is the number of unipotent blocks of size at least $3$ of $\mcK$. A unipotent block of $\mcF$ of size $k$ produces a unipotent block of $\mcF \otimes [\times \alpha]^*\HYPK_2 $ of size $k+1$ and one of size $k-1$, so it produces one unipotent block of size $3$ if $k\geq 2$ and another if $k \geq 4$.
\end{proof}

\begin{lemma}\label{nonzero-monodromy} Assume $\mcF$ satisfies (SL). The dimension of the monodromy invariant subspace of  $\mcK\star\mcL$ at $\beta/\alpha$ is  

\[4 \sw_0(\mcF) + 4 \sum_{x \in \Gm} \chi_x(\mcF) + \sum_s N_s(\mcF) (2s + 2\max(s, 1/2))  -\left(2 \sum_{k \geq 1} U_k (\mcF) + \sum_{k \geq 2} U_k(\mcF) + \sum_{k \geq 4} U_k(\mcF) \right)\] \end{lemma}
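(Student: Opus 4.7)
My plan is to compute $\dim\mcZ^{I_{\beta/\alpha}}$ as the generic rank of $\mcZ$ minus its drop at $v_0:=\beta/\alpha$, following the template of Lemma~\ref{zero-monodromy} but at the interior collision point rather than at $0$.

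\emph{Step 1 (generic rank).} Writing $\mcG_v:=\mcF\otimes[\times\alpha]^*\HYPK_2\otimes[x\mapsto\beta x/v]^*\HYPK_2$ on $\Gm$, one has $\rank(\mcZ)=-\chi_c(\Gm,\mcG_v)$ at generic $v$ (using that (MO) rules out the $H^0_c,H^2_c$ contributions). Grothendieck--Ogg--Shafarevich gives $\rank(\mcZ)=\sw_0(\mcG_v)+\sw_\infty(\mcG_v)+\sum_{x\in\Gm}\chi_x(\mcG_v)$, and since both $[\times\alpha]^*\HYPK_2$ and $[x\mapsto\beta x/v]^*\HYPK_2$ are lisse on $\Gm$ and tame of rank $2$ at $0$, the terms at $0$ and at each $x\in\Gm$ scale by $4$, producing $4\sw_0(\mcF)+4\sum_{x\in\Gm}\chi_x(\mcF)$. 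At $\infty$, both Kloosterman factors have slope $1/2$; assumption (SL) excludes $s=1/2$ for $\mcF$, so tensoring a slope-$s$ summand of rank $N_s$ against them gives a sheaf of slope $\max(s,1/2)$ and rank $4N_s$, whence $\sw_\infty(\mcG_v)=4\sum_s N_s\max(s,1/2)$.

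\emph{Step 2 (Swan jump at $v_0$).} At $v=v_0$ the two Kloosterman factors combine to $[\times\alpha]^*(\HYPK_2\otimes\HYPK_2)$, and via the decomposition $\HYPK_2\otimes\HYPK_2\simeq\mathbf{1}\oplus\Sym^2\HYPK_2$ (verified at $\infty$ by a Mackey computation from $\HYPK_2|_\infty\simeq\Ind^I_{I(2)}\mathcal{L}_{\psi(2\sqrt{x})}$) the local structure at $\infty$ becomes a direct sum of a tame rank-$2$ piece and a slope-$1/2$ rank-$2$ piece. For each slope-$s$ summand of $\mcF$ with $s<1/2$ the Swan at $\infty$ drops from $2N_s$ to $N_s(2s+1)$, while for $s>1/2$ it is unchanged, so the total Swan jump is
\[
\sw_\infty(\mcG_v)-\sw_\infty(\mcG_{v_0})=\sum_{s<1/2}N_s(1-2s)=2\sum_s N_s(\max(s,1/2)-s).
\]
Deligne's semicontinuity theorem then yields $\rank(\mcZ)-\dim(\mcZ_!)_{v_0}=\sum_{s<1/2}N_s(1-2s)$.

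\emph{Step 3 (passage from $\mcZ_!$ to middle convolution).} One has a surjection $\mcZ_!\twoheadrightarrow\mcZ$ whose kernel is a skyscraper supported at the exceptional locus $\{0,v_0,\infty\}$, and the stalk of $\mcZ$ at $v_0$ equals $\mcZ^{I_{v_0}}$. Applying the Rojas-Le\'on local-convolution machinery of \cite{arl} at the interior point $v_0$ (the analog at a collision of wild singularities of the analysis at $0$ in the proof of Lemma~\ref{zero-monodromy}), the punctual contribution at $v_0$ decomposes as $2U_{\geq1}(\mcF)+U_{\geq2}(\mcF)+U_{\geq4}(\mcF)$, where $U_{\geq k}(\mcF):=\sum_{j\geq k}U_j(\mcF)$. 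The term $2U_{\geq1}(\mcF)$ comes from pairing the tame rank-$2$ summand of $\HYPK_2\otimes\HYPK_2$ at $\infty$ against the unipotent blocks of $\mcF$ at $0$ (factor $2$ from the Kloosterman rank), and the term $U_{\geq2}(\mcF)+U_{\geq4}(\mcF)=\dim\mcZ^{I_0}$ reproduces the Jordan-block tensor bookkeeping $U_k\otimes U_2=U_{k+1}\oplus U_{k-1}$ familiar from Lemma~\ref{zero-monodromy}. Combining,
\[
\dim\mcZ^{I_{v_0}}=\rank(\mcZ)-\sum_{s<1/2}N_s(1-2s)-\bigl(2U_{\geq1}(\mcF)+U_{\geq2}(\mcF)+U_{\geq4}(\mcF)\bigr),
\]
which matches the stated formula via the elementary identity $4\max(s,1/2)-(1-2s)\mathbf{1}_{s<1/2}=2s+2\max(s,1/2)$.

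The main obstacle is Step 3: identifying the correct punctual contribution at the collision point $v_0$ requires applying Rojas-Le\'on's local convolution formulas at an interior point of $\Gm$ rather than at the standard boundary $\{0,\infty\}$, which in turn demands tracking how the trivial summand of $\HYPK_2\otimes\HYPK_2$ at $\infty$ couples to the tame unipotent structure of $\mcF$ at $0$ through the convolution pairing. Steps 1 and 2 are essentially mechanical applications of Grothendieck--Ogg--Shafarevich and Deligne's semicontinuity theorem.
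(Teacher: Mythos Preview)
Your Steps 1 and 2 are sound as computations, but Step 3 contains a genuine conceptual error that undermines the assembly. In Step~1 what you actually compute is $-\chi_c(\Gm,\mcG_v)$ for generic $v$, which is the generic rank of the \emph{compactly supported} convolution $\mcZ_!$, not of the middle convolution $\mcZ$. You then try to pass from $\mcZ_!$ to $\mcZ$ in Step~3 by asserting that the kernel of $\mcZ_!\twoheadrightarrow\mcZ$ is a skyscraper supported on $\{0,v_0,\infty\}$ and invoking Rojas-Le\'on to compute its stalk at $v_0$. Both claims are wrong. As the paper notes in the proof of Lemma~\ref{he-lisse}, $\mcZ_!$ and $\mcZ$ differ by a \emph{lisse} sheaf on $\Gm$, not a punctual one: its rank is $\dim\mcG_v^{I_0}+\dim\mcG_v^{I_\infty}$ at generic $v$ (the boundary inertia invariants), and since $\mcG_v^{I_0}$ does not depend on $v$ while $\mcG_v^{I_\infty}=0$ generically, this rank is exactly $\dim\mcG^{I_0}$. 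Rojas-Le\'on's theory computes the local monodromy of $\mcZ$ itself, not this lisse correction, so your justification does not establish the formula you write down. Your two errors happen to cancel numerically---the ``punctual contribution'' you posit coincides with $\dim\mcG^{I_0}$---but the argument as written does not prove it.

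The paper's route is both simpler and avoids this trap. It uses directly that $\dim\mcZ^{I_{v_0}}=-\chi_c(\Gm,\mcG_{v_0})-\dim\mcG^{I_0}-\dim\mcG^{I_\infty,\mathrm{gen}}$, the last two terms being the rank of the lisse correction. One then computes $-\chi_c(\Gm,\mcG_{v_0})$ at the special fiber $v_0=\beta/\alpha$ in one stroke: there the two Kloosterman factors coincide, $\HYPK_2^{\otimes 2}$ at $\infty$ splits as (tame rank~$2$)$\oplus$(slope-$1/2$ rank~$2$), and tensoring against a slope-$s$ summand of $\mcF$ gives Swan contribution $N_s(2s+2\max(s,1/2))$ directly---no generic-rank/Swan-jump detour needed. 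The subtracted $\dim\mcG^{I_0}$ is a Jordan-block count in $\mcF|_0\otimes U_2\otimes U_2$, giving the bracketed $U_k$ terms. No Rojas-Le\'on input is required for this lemma.
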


\begin{proof}  The dimension of the monodromy invariants at $\beta/\alpha$ is minus the Euler characteristic of \[ \mcK \otimes [ y \to \beta/(\alpha y) ]^* \mcL = \mcK \otimes [ \times \alpha]^* \HYPK_2   = \mcF \otimes [\times \alpha]^* \HYPK_2 \otimes[ \times \alpha]^* \HYPK_2 .\]  on $\mathbb G_m$, minus the dimensions of the spaces of local monodromy invariants at $0$ and $\infty$ of \[ \mcK \otimes [ y \to v/y ]^* \mcL =  \mcF \otimes [\times \alpha]^* \HYPK_2 \otimes [\times \beta/v]^* \HYPK_2  \] for \emph{generic} $v$. These extra terms come from taking the middle convolution and not the compactly-supported convolution.

 By combining the formula for the Euler characteristic of a middle extension sheaf with a calculation of the invariants and Swan conductor of a tensor product monodromy representation at each point, the Euler characteristic term is $4$ times the sum of the drop plus Swan at all the finite singularities of $\mcF$, plus the sum over all local monodromy representations at $\infty$ of rank $r$ and slope $s$ of $r (2s + 2 \max(s,1/2))$, plus $4$ times the Swan conductor of $\mcF$ at $0$.
 
 Because $ [\times \alpha]^* \HYPK_2 $ and $[\times \beta/v]^* \HYPK_2$ both have local monodromy representations at $0$ unipotent of rank $2$, the dimension of the local monodromy invariants of $ \mcF \otimes [\times \alpha]^* \HYPK_2 \otimes[ \times \alpha]^* \HYPK_2 $  at $0$ is $2$ for each unipotent block of size $1$ in the local monodromy of $\mcF$ at $0$, $3$ for each unipotent block of size $2$, and $4$ for each unipotent block of greater size.  
  
  Because the local monodromy of $  [\times \beta/v]^* \HYPK_2 $ at $\infty$ is irreducible and distinct for distinct $v$, for only finitely many $v$ can $\mcF \otimes [\times \alpha]^* \HYPK_2 \otimes [\times \beta/v]^* \HYPK_2$ have nonzero local monodromy invariants at $\infty$, meaning that for generic $v$ there are no local monodromy invariants.  \end{proof}

\begin{lemma}\label{invariants-equal} Assume $\mcF$ satisfies (SL). Suppose that the dimension of the monodromy invariants subspace of $\mcK\star\mcL$ at $0$ is equal to the dimension of the monodromy invariant subspace of  $\mcK'\star\mcL'$ at $\beta'/\alpha'$. Then $\mcF$ is lisse on $\Gm$, with the local monodromy at $0$ unipotent with all blocks of size $2,3,$ and $4$, all slopes of the local monodromy representation at $\infty$ at most  $1/2$, and $\chi(\mathbb A^1, \mcF)=0$.  \end{lemma}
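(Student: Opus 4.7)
The plan is to extract the structural constraints on $\mcF$ from the single numerical identity obtained by equating the two dimension formulas of Lemma~\ref{zero-monodromy} and Lemma~\ref{nonzero-monodromy}. Both dimensions depend only on invariants of $\mcF$ itself (and not on $\alpha,\beta,\alpha',\beta'$), so writing $U_{\geq j}:=\sum_{k\geq j}U_k(\mcF)$, $\sw_0:=\sw_0(\mcF)$, $X:=\sum_{x\in\Gm}\chi_x(\mcF)$, $N_s:=N_s(\mcF)$ and $r:=\rank\mcF$, the hypothesis rearranges to
\begin{equation*}
U_{\geq 1} + U_{\geq 2} + U_{\geq 4} \;=\; 2\sw_0 + 2X + \sum_s N_s\bigl(s + \max(s,1/2)\bigr).
\end{equation*}

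The next step is to bring in the Euler characteristic of $\mcF$. Because $\mcF$ is a non-trivial irreducible middle-extension sheaf on the affine line, Artin vanishing on affine curves kills $H^2(\mathbb A^1,\mcF)$ while irreducibility kills $H^0$, so $\chi(\mathbb A^1,\mcF)\leq 0$. By Grothendieck--Ogg--Shafarevich this reads
\begin{equation*}
U_{\geq 1} \;=\; \dim\mcF^{I_0} \;\leq\; \sw_0 + X + \sw_\infty ,
\end{equation*}
and subtracting this from the identity above isolates
\begin{equation*}
U_{\geq 2}+U_{\geq 4} \;\geq\; \sw_0 + X + \sum_s N_s\,\max(s,1/2).
\end{equation*}

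The heart of the argument is to sandwich this between two elementary bounds. On the upper side, expand $U_{\geq 2}+U_{\geq 4}=U_2+U_3+2\sum_{k\geq 4}U_k=\sum_k c_k U_k$ and check that $c_k\leq k/2$ for every $k\geq 1$, with equality iff $k\in\{2,4\}$; combined with the trivial bound $\sum_k kU_k\leq r$ (equality iff the local monodromy at $0$ is entirely unipotent) this yields $U_{\geq 2}+U_{\geq 4}\leq r/2$. On the lower side, $\sum_s N_s=r$ and $\max(s,1/2)\geq 1/2$ give $\sum_s N_s\max(s,1/2)\geq r/2$, with equality iff every slope at $\infty$ is $\leq 1/2$. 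The sandwich then yields $r/2\geq \sw_0+X+r/2$, hence $\sw_0=X=0$ (so $\mcF$ is lisse on $\Gm$ and tame at $0$); moreover every intermediate inequality must be tight, which simultaneously forces $\chi(\mathbb A^1,\mcF)=0$, all breaks at $\infty$ are $\leq 1/2$ (hence $<1/2$ by (SL)), and the local monodromy at $0$ is entirely unipotent with block sizes in $\{2,3,4\}$ (in fact the sharp form of the combinatorial inequality above restricts the sizes further to $\{2,4\}$).

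The main obstacle is this combinatorial inequality and its equality analysis: the whole structural conclusion must be teased out of the saturation of one scalar inequality, with (SL) entering only at the very last step, to upgrade the non-strict slope bound $\leq 1/2$ to $<1/2$.
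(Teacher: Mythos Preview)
Your approach is essentially the same as the paper's: equate the formulas of Lemma~\ref{zero-monodromy} and Lemma~\ref{nonzero-monodromy}, bring in the Euler--characteristic inequality $\chi(\mathbb A^1,\mcF)\leq 0$ and the rank bound $\sum_k kU_k\leq r=\sum_s N_s\leq \sum_s 2N_s\max(s,1/2)$, and force all inequalities to saturate. The paper does this by subtracting the two inequalities simultaneously from the identity; you subtract one and sandwich. These are the same maneuver.

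There is, however, one concrete issue with your input data. The displayed formula in the statement of Lemma~\ref{nonzero-monodromy} has a typo: the subtracted term should be $2U_{\geq 1}+U_{\geq 2}+U_{\geq 3}$, not $2U_{\geq 1}+U_{\geq 2}+U_{\geq 4}$. Indeed, that lemma's proof computes the invariants of $\mcF\otimes\HYPK_2\otimes\HYPK_2$ at $0$ as $2U_1+3U_2+4U_{\geq 3}$, which rewrites as $2U_{\geq 1}+U_{\geq 2}+U_{\geq 3}$. With the corrected formula your identity becomes
\[
2U_{\geq 1}+2U_{\geq 2}+U_{\geq 3}+U_{\geq 4}=4\sw_0+4X+\sum_s N_s(2s+2\max(s,1/2)),
\]
and rerunning your sandwich gives $U_{\geq 2}+\tfrac12 U_{\geq 3}+\tfrac12 U_{\geq 4}=\sum_k c_kU_k$ with $c_k\leq k/2$ and equality precisely for $k\in\{2,3,4\}$; hence the equality analysis yields block sizes in $\{2,3,4\}$ as stated, not the sharper $\{2,4\}$ you obtained. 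The parenthetical ``in fact \dots restricts the sizes further to $\{2,4\}$'' is therefore an artifact of the typo and should be dropped; everything else in your argument is correct.
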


  \begin{proof} By Lemmas \ref{zero-monodromy} and \ref{nonzero-monodromy} our assumption implies that  
  \[ \sum_{k \geq 2} U_k(\mcF) + \sum_{k \geq 4} U_{k} (\mcF) \] \[= 4 \sw_0(\mcF) + 4 \sum_{x \in \Gm} \chi_x(\mcF) + \sum_s N_s(\mcF) (2s + 2\max(s, 1/2))  -\left(2 \sum_{k \geq 1} U_k (\mcF) + \sum_{k \geq 2} U_k(\mcF) + \sum_{k \geq 4} U_k(\mcF) \right)\]
 
 This gives 
\begin{multline*}
  2 \sum_{k \geq 1} U_k (\mcF)  + 2\sum_{k \geq 2} U_k(\mcF) + \sum_{k \geq 3} U_k(\mcF)+ \sum_{k \geq 4} U_{k} (\mcF) \\= 4 \sw_0(\mcF) + 4 \sum_{x \in \Gm} \chi_x(\mcF) + \sum_s N_s(\mcF) (2s + 2\max(s, 1/2))
\end{multline*}

  On the other hand, we have
  
  \[ 0 \leq - \chi(\mcF) =  \sw_0(\mcF) +  \sum_{x \in \Gm} \chi_x(\mcF) + \sum_s N_s(\mcF) s - \sum_{k\geq 1} U_k (\mcF) \]
  
  and \[ \sum_{k \geq 1} k U_k(\mcF) \leq \rank(\mcF) = \sum_s N_s(\mcF) \leq \sum_s N_s 2 \max(s,1/2) \]
  
  Subtracting twice the first inequality plus the second inequality from our identity, we must have
  
  \[ - U_1 (\mcF)  - \sum_{k \geq 5} (k-4) U_k(\mcF) \geq 2 \sw_0(\mcF) + 2 \sum_{x \in \Gm} \chi_x(\mcF) \geq 0 .\]
  
  Thus, for the identity to hold, every inequality must be sharp. It follows that we must have $\chi(\mcF)=0$, $\sum_{k \geq 1} k U_k(\mcF) = \rank(\mcF) $ (meaning the local monodromy at $0$ is unipotent), $U_k(\mcF)=0$ for $k \neq 2,3,4$,  $\chi_x(\mcF)=0$ for $x \in \Gm$ (meaning $\mcF$ is lisse on $\Gm$, and $2 \max (s,1/2)=0$ for all $s$ with $s \leq 0$ (meaning all slopes at $\infty \leq 1/2$). \end{proof}

\begin{lemma}\label{unipotents-zero}  Under the assumptions of Lemma \ref{invariants-equal}, the local monodromy of $\mcK\star\mcL$ at $0$ is unipotent.  \end{lemma}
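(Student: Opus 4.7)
The plan is to show that the local monodromy of $\mcK \star \mcL$ at $0$ contains no non-trivial tame Kummer character and no wild summand, whence it must be unipotent. The starting point is to combine the conclusions of Lemma \ref{invariants-equal} with assumption (SL): the lemma asserts all slopes of $\mcF$ at $\infty$ are $\leq 1/2$, while (SL) excludes slope exactly $1/2$, so every slope of $\mcF$ at $\infty$ is strictly less than $1/2$. Together with $\mcF$ being lisse on $\Gm$ and unipotent at $0$, this gives: $\mcK = \mcF \otimes [\times \alpha]^*\HYPK_2$ is lisse on $\Gm$, tame and unipotent at $0$ (the tensor of two tame unipotent representations is unipotent), and has all slopes at $\infty$ exactly $1/2$ (tensoring a slope $<1/2$ summand with the irreducible slope-$1/2$ Kloosterman produces pure slope $1/2$). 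Correspondingly $\mcL = [y\to\beta/y]^*\HYPK_2$ is lisse on $\Gm$, has irreducible wild local monodromy of slope $1/2$ at $0$, and is tame at $\infty$ with a single unipotent Jordan block of size $2$.

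Next I would eliminate non-trivial tame characters using the multiplicativity of Rojas-Le\'on's polynomials, $P_{\mcK \star \mcL, \chi}(T) = P_{\mcK, \chi}(T)\,P_{\mcL, \chi}(T)$, as invoked in the proof of Lemma \ref{he-nontrivial}(2t). For any non-trivial tame character $\chi$ the factor $P_{\mcL, \chi}(T)$ vanishes: the local monodromy of $\mcL$ at $0$ is irreducible wild and so contains no $\mathcal L_\chi$ summand, while its local monodromy at $\infty$ is $U_2$, containing only the trivial character. Hence $\mcK \star \mcL$ has no $\mathcal L_\chi \otimes U_k$ summand at $0$ for any non-trivial $\chi$.

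Then I would rule out wild contributions using Rojas-Le\'on's formula \cite[Theorem 16]{arl} describing the wild part of the local monodromy of a multiplicative convolution as a sum of terms $\overline{\rho}_{(x,y)}(\mcK^w_{(x)}, \mcL^w_{(y)})$. Since $\mcK$ and $\mcL$ are lisse on $\Gm$, only pairs with $x, y \in \{0,\infty\}$ can contribute; since $\mcK$ is tame at $0$ and $\mcL$ is tame at $\infty$, the pairs $(0,0)$ and $(0,\infty)$ drop out. The only remaining pair is $(\infty, 0)$, which pairs $\mcK^w_{(\infty)}$ against $\mcL^w_{(0)}$. By \cite[Proposition 13]{arl} (used in the opposite direction in the proof of Lemma \ref{he-nontrivial}(4)), the functor $\overline{\rho}_{(\infty,0)}$ yields a non-trivial wild output only when some slope on the first side is strictly greater than some slope on the second; here both sides have all slopes equal to $1/2$, so there is no wild contribution. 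Combined with the previous step, the local monodromy of $\mcK \star \mcL$ at $0$ is tame and all its Kummer characters are trivial, i.e., it is unipotent.

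The main obstacle I expect is the boundary case in the last step, where slopes coincide exactly on both sides of the local convolution at $(\infty, 0)$. A careful invocation of Rojas-Le\'on's statement, or if needed a short direct check via the underlying Laumon local Fourier transforms, is required to confirm vanishing of the wild output in this equal-slopes regime rather than its collapse to a possibly non-trivial tame output; the latter would still be controlled by the tame analysis of Step~2 but is worth verifying explicitly.
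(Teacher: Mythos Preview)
Your approach is the same as the paper's: rule out wild monodromy at $0$ via \cite[Theorem~16]{arl} and rule out nontrivial tame characters via the multiplicativity $P_{\mcK\star\mcL,\chi}=P_{\mcK,\chi}\,P_{\mcL,\chi}$.

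There is one error in the tame step. For nontrivial $\chi$ the polynomial $P_{\mcL,\chi}(T)$ does not vanish: by the definition recalled in the proof of Lemma~\ref{he-nontrivial}, its value at $T=1$ equals $-\chi(\Gm,\mcL)=1$, so in fact $P_{\mcL,\chi}(T)\equiv 1$. The paper's argument is instead that \emph{both} $P_{\mcK,\chi}$ and $P_{\mcL,\chi}$ are constant for nontrivial $\chi$ (since $\mcK$ is unipotent at $0$ and totally wild at $\infty$, while $\mcL$ is totally wild at $0$ and unipotent at $\infty$, neither carries any $\mathcal L_\chi\otimes U_k$ summand), hence their product is constant and $\mcK\star\mcL$ has no $\mathcal L_\chi\otimes U_k$ at $0$. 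Your deduction from ``$P_{\mcL,\chi}=0$'' short-circuits this; once corrected to $P_{\mcL,\chi}=1$ you still need the fact (which you set up but do not invoke) that $P_{\mcK,\chi}$ has no negative-degree terms.

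Your wild-part analysis and the flagged boundary concern are fine: \cite[Proposition~13]{arl} gives $\overline{\rho}_{(\infty,0)}=0$ whenever no slope on the first side strictly exceeds any slope on the second, and this covers the equal-slope case $a=b=1/2$. The paper's written proof actually records the slopes of $\mcF$ (all $<1/2$) rather than of $\mcK$ (all $=1/2$) at this point, but either input satisfies the hypothesis of that proposition.
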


\begin{proof} We will calculate the wild part and the $\chi$-tensor unipotent part of the local monodromy at $0$ for each nontrivial character $\chi$ using \cite{arl}'s local convolution theory, and show that they are trivial, deducing that the local monodromy is unipotent. 

By Lemma \ref{invariants-equal}, $\mcF$ is lisse on $\Gm$ and tame at $0$, so $\mcF_{(s)}$ vanishes for all $s \in \Gm$ and $\mcF_{(0)}^{w} =0$. Similarly, $\mcL$ is lisse on $\Gm$ and tame at $\infty$, so $\mcL_{(s)}$ vanishes for all $s \in \Gm$ and $\mcL_{(\infty)}^{w}=0$. So the only term in \cite[Theorem 16]{arl} that could be nonvanishing is \[ \overline{\rho}_{\infty, 0} (\mcF_{(\infty)}^{w} , \mcL_{(0)}^{w}).\] But by Lemma \ref{invariants-equal}, $\mcF_{(\infty)}^{w}$ has all slopes $<1/2$, and $\mcL_{(0)}^{w}$ has slope $1/2$, so by \cite[Proposition 13]{arl} we have \[ \overline{\rho}_{\infty, 0} (\mcF_{(\infty)}^{w} , \mcL_{(0)}^{w})=0\] as well.

For $\chi$, because $\mcF$ is unipotent at $0$, $\mcF \otimes [\times \alpha]^*\HYPK_2 $ is unipotent at $0$, and because $\mcF$ has no slope exactly equal to $1/2$ at $\infty$,  $\mcF \otimes [\times \alpha]^*\HYPK_2 $ is totally wild at infinity, so $\mathcal L_\chi \otimes U_k$ does not appear in the local monodromy of  $\mcF \otimes [\times \alpha]^*\HYPK_2 $ at $0$ or $\infty$ for $\chi$ nontrivial. Thus $P_{ \mcK , \chi} ( T)$ is a constant, and by the same reasoning $P_{ \mcL, \chi} (T)$ is a constant, so their product is a constant, and thus by \cite[Proposition 28]{arl} $\mathcal L_\chi \otimes U_k$  does not appear in the local monodromy of $\mcK \star\mcL$ at zero or infinity.

\end{proof}

\begin{lemma}\label{not-unipotent} Under the assumptions of Lemma \ref{invariants-equal}, the local monodromy of $\mcK'\star\mcL'$ at $\beta'/\alpha'$ is not unipotent. \end{lemma}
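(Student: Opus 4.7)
The plan is to show that, under the hypotheses of Lemma~\ref{invariants-equal}, the local monodromy of $\mcZ'=\mcK'\star\mcL'$ at $v_0:=\beta'/\alpha'$ is not unipotent; together with Lemma~\ref{unipotents-zero} and the matching of singularities (which forces $\delta=\beta/\alpha$ and identifies the local monodromy of $\mcZ'$ at $\beta'/\alpha'$ with that of $\mcZ$ at $0$), this yields the contradiction needed for Proposition~\ref{hardest-exponential-sum}. Under the standing hypotheses, recall that $\mcF$ is lisse on $\Gm$, tame-unipotent at $0$ with Jordan blocks of sizes in $\{2,3,4\}$, has slopes at $\infty$ all $\leq 1/2$, and $\chi(\Aa^1,\mcF)=0$; combining these with the invariants-equality hypothesis via the Euler-characteristic calculation forces $U_3(\mcF)=0$ and $\sw_\infty(\mcF)=U_2(\mcF)+U_4(\mcF)\ge 1$.

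First, since $\mcF$ is lisse on $\Gm$, both $\mcK'=\mcF\otimes[\times\alpha']^*\KL_2$ and $\mcL'=[y\to\beta'/y]^*\KL_2$ are lisse on $\Gm$, so all their wild singularities lie in $\{0,\infty\}$. By the interior-point version of Rojas-Le\'on's wild-convolution formula~\cite[Thm.~16]{arl}, the local monodromy of $\mcZ'$ at $v_0\in\Gm$ is tame: any wild contribution would come from a pairing of wild singularities of $\mcK'$ and $\mcL'$ whose multiplicative product equals $v_0$, but no such product lies in $\Gm$. Next, the tame monodromy at $v_0$ must contain a non-trivial character. The key geometric input is the resonance at $v=v_0$: the tensor $[\times\alpha']^*\KL_2\otimes[\times\beta'/v]^*\KL_2$ degenerates from a generic rank-$4$ object totally wild at $\infty$ (all slopes $1/2$) to $[\times\alpha']^*(\KL_2\otimes\KL_2)=[\times\alpha']^*\Sym^2(\KL_2)\oplus\ovQl(-1)$, where the rank-$1$ summand is tame at $\infty$ while $\Sym^2(\KL_2)$ retains slope $1/2$. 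A local $\nearb$ computation for the projection $\pi\colon\Gm_x\times\Spec\mcO_{v_0}^{\mathrm h}\to\Spec\mcO_{v_0}^{\mathrm h}$ applied to the integrand $\mcF(x)\cdot\KL_2(\alpha'x)\cdot\KL_2(\beta'x/v)$ then shows that the vanishing cycles at $v_0$ are carried (up to Tate twist) on a copy of $H^1_c(\Gm,\mcF)\otimes\mcL_\chi$ for some non-trivial tame character $\chi$ of $I_{v_0}$ produced by the collision of the two Kloosterman factors across the resonance. By Grothendieck--Ogg--Shafarevich combined with the vanishing of $H^0_c(\Gm,\mcF)$ and $H^2_c(\Gm,\mcF)$ (the former since $\Gm$ is affine and $\mcF$ lisse, the latter since $\mcF$ is irreducible and non-constant), we have $\dim H^1_c(\Gm,\mcF)=\sw_\infty(\mcF)\ge 1$, so the $\chi$-isotypic component is non-zero and the local monodromy at $v_0$ is non-unipotent.

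The main obstacle is making rigorous the nearby/vanishing-cycles computation at the interior point $v_0$ and verifying that the character $\chi$ produced is indeed non-trivial: Rojas-Le\'on's local convolution formalism is developed at $0$ and $\infty$, so at $v_0\in\Gm$ one must either apply a multiplicative translation $v\mapsto v_0 w$ to move $v_0$ to a standard endpoint and invoke a variant of \cite[Prop.~28]{arl} for the tame-character content, or compute $\nearb$ directly by combining the decomposition $\KL_2\otimes\KL_2=\Sym^2(\KL_2)\oplus\ovQl(-1)$ with a Deligne-type semicontinuity argument along the family, tracking carefully the Tate twist in $\det(\KL_2)=\ovQl(-1)$ and the local slope structure of $\mcF$ at $\infty$. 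Once $\chi$ is identified as non-trivial, the non-vanishing of its isotypic component via Grothendieck--Ogg--Shafarevich is immediate from the constraints on $\mcF$.
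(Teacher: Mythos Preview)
Your proposal has a genuine gap at precisely the point you yourself flag as ``the main obstacle'': you assert that the vanishing cycles of $\mcZ'$ at $v_0=\beta'/\alpha'$ are carried on $H^1_c(\Gm,\mcF)\otimes\mcL_\chi$ for some \emph{non-trivial} tame character $\chi$, but you never establish this. Worse, the form you claim is inconsistent with what is already known: by Lemmas~\ref{zero-monodromy} and~\ref{nonzero-monodromy} together with the invariants-equality hypothesis, the local monodromy of $\mcZ'$ at $v_0$ has a \emph{nonzero} invariant subspace (of the same positive dimension as that of $\mcZ$ at $0$), so it cannot be purely $\chi$-isotypic for a nontrivial $\chi$. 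Any correct description of the local monodromy at $v_0$ must therefore contain a unipotent piece, and the whole question is whether there is \emph{also} a non-unipotent piece. Your heuristic about the ``collision'' of Kloosterman factors is on the right track for locating the singularity, but it does not by itself determine which tame characters appear, and your side claim that $U_3(\mcF)=0$ does not follow from Lemma~\ref{invariants-equal}.

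The paper avoids a direct vanishing-cycles computation at the interior point $v_0$ altogether by an elegant indirect argument. It assumes for contradiction that the local monodromy of $\mcZ'=\mcK'\star\mcL'$ at $v_0$ is unipotent, and then convolves back: since $\mcL'\star[\times\beta']^*\KL_2$ is trivial in the convolution category, one has $\mcK'=\mcZ'\star[\times\beta']^*\KL_2$. Now Rojas-Le\'on's local convolution theory applies directly at $\infty$ (where it is formulated), and one computes the slope-$1/2$ part of the local monodromy of $\mcK'$ at $\infty$. The only contribution with slope exactly $1/2$ comes from the functor $\rho_{(\beta'/\alpha',\infty)}$ applied to the (assumed unipotent) local monodromy of $\mcZ'$ at $v_0$ and the wild part of $[\times\beta']^*\KL_2$ at $\infty$; by exactness this forces the slope-$1/2$ part of $\mcF\otimes[\times\alpha']^*\KL_2$ at $\infty$ to be $[\times\alpha']^*\KL_2$ tensored with a unipotent representation. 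Tensoring once more with $[\times\alpha']^*\KL_2$ and extracting the slope $<1/2$ part shows that the local monodromy of $\mcF$ at $\infty$ sits inside a sum of unipotents and unipotents twisted by the quadratic character; in particular $\mcF$ is tame at $\infty$. Combined with the other constraints (lisse on $\Gm$, unipotent at $0$, irreducible), this forces $\mcF$ to be trivial, contradicting the standing hypothesis. This route never requires identifying a specific non-trivial character in the local monodromy at $v_0$.
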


\begin{proof} Assume for contradiction that this representation is unipotent.
We have
$$\mcK'=\mcK'\star ([y\ra\beta'/y]^*\HYPK_2\star [y\ra\beta'y]^*\HYPK_2)=\mcK'\star\mcL'\star [\times\beta']^*\HYPK_2$$
%\begin{multline*}
%((\mcF \otimes [\times \alpha']^*\KL_2 ) \star [y\ra\beta'/y]^*\HYPK_2) \star [\times \beta]^*\KL_2 \\= (\mcF \otimes [\times \alpha']^*\KL_2 ) \star ([y\ra\beta'/y]^*\HYPK_2 \star [\times \beta]^*\KL_2 ) = \mcF \otimes [\times \alpha']^*\KL_2 .
%\end{multline*}

Using this identity, we apply \cite{arl}'s local convolution theory to calculate the slope $1/2$ part of the local monodromy of $\mcK'$ at $\infty$. Because $ [\times\beta']^*\HYPK_2$ is lisse away from $0$ and $\infty$ and tame at $0$, the only functors that contribute to the wild part of $\mcK'$ at $\infty$ are $\rho_{ (0,\infty)}, \rho_{ (\beta'/\alpha',\infty)},$ and $\rho_{(\infty, \infty)}$. Because the unique slope at $\infty$ of $ [\times\beta']^*\HYPK_2$ is $1/2$, by \cite[]{arl} the image of $\rho_{(0,\infty)}$ has slopes $<1/2$ and by \cite[Proposition 11]{arl}, the image of $\rho_{(\infty, \infty)}$ has slopes $<1/2$. So the slope $1/2$ part only arises from 
$$ \rho_{ (\beta'/\alpha',\infty)} \left ((\mcK'\star\mcL')_{(\beta'/\alpha')},  [\times\beta']^*{\HYPK_2}_{(\infty)}^{w} \right) .$$
 Because $\rho_{ (\beta'/\alpha',\infty)}$ is exact and $(\mcK'\star\mcL')_{(\beta'/\alpha')}$ is unipotent, this is an iterated extension of  
 $$ \rho_{ (\beta'/\alpha',\infty)} \left (1,  [\times\beta']^*{\HYPK_2}_{(\infty)}^{w} \right) = [\times\alpha']^*{\HYPK_2}_{(\infty)} .$$ (This identity can be checked by applying local convolution functors to calculate the local monodromy at $\infty$ of  $\delta_{\beta'/\alpha'} \star [\times\beta']^*{\HYPK_2} $, say. ) 

Then the slope $1/2$ part of the local monodromy of $(\mcF \otimes [\times\alpha']^*{\HYPK_2} )$ at $\infty$ is an iterated extension of $[\times\alpha']^*{\HYPK_2}$, hence $[\times\alpha']^*{\HYPK_2}$ tensor a unipotent representation.

Let $V$ be the local monodromy representation of $\mcF$ at $\infty$. By Lemma \ref{invariants-equal} and assumption (SL), $V$ has all slopes $<1/2$, so $V \otimes [\times\alpha']^*{\HYPK_2}$ has all slopes exactly $1/2$. Thus  $V \otimes [\times\alpha']^*{\HYPK_2}$ is $[\times\alpha']^*{\HYPK_2}$ tensored with a unipotent representation. Because $V$ is a summand of  
$$ V \otimes ( [\times\alpha']^*{\HYPK_2} \otimes [\times\alpha']^*{\HYPK_2}) = (V \otimes [\times\alpha']^*{\HYPK_2} ) \otimes [\times\alpha']^*{\HYPK_2},$$ it is a summand of 
$$ [\times\alpha']^*{\HYPK_2} \otimes [\times\alpha']^*{\HYPK_2} $$ tensored with a unipotent representation, and because it has slope $<1/2$, it is a summand of  the slope $<1/2$ part of $[\times\alpha']^*{\HYPK_2} \otimes [\times\alpha']^*{\HYPK_2}$ tensored with a unipotent representation. The slope $<1/2$ part of $[\times\alpha']^*{\HYPK_2} \otimes [\times\alpha']^*{\HYPK_2}$ is the sum of a trivial representation and quadratic character, so $V$ is a summand of the sum of a unipotent representation and another unipotent representation tensored with a quadratic character.

Hence $V$ is a sum of unipotent representations and unipotent representations tensored with the quadratic character. In particular, $\mcF$ is tamely ramified at $\infty$. So because it is irreducible and lisse on $\Gm$, tamely ramified at $\infty$, and unipotent at $0$, it must be the trivial sheaf, which contradicts our assumption that $\mcF$ is nontrivial. \end{proof}
 
\begin{proof}[Proof of Proposition \ref{hardest-exponential-sum}] 

It is clear from their constructions that the conductors of $\mcZ$ and $\mcZ'$ are $O ( c (\mcF)^{O(1)})$. 
By Lemma \ref{he-irreducibility}, $\mcZ'$  is irreducible. So we get the bound of Proposition \ref{hardest-exponential-sum} unless $\mcZ'$ is geometrically isomorphic to $[+\delta]^* \mcZ$. Suppose this is the case.

By Lemma \ref{he-nontrivial}, $[+\delta]^* \mcZ$ has a singularity at $-\delta$ which should be a singularity of $\mcZ'$. Since $\delta\not=0,\infty$, Lemma \ref{he-lisse} (applied to $\mcZ'$) imply that $-\delta=\beta'/\alpha'$ and the dimension of the monodromy invariant of $\mcZ$ and $\mcZ'$ respectively at $0$ and $\beta'/\alpha'$ are equal. By Lemma \ref{unipotents-zero}, the local monodromy of $[+\delta]^* \mcZ$ at $-\delta$ is unipotent while by Lemma \ref{not-unipotent}, the local monodromy of $\mcZ'$ at $\beta'/\alpha'$ is not unipotent, a contradiction.
\end{proof}

\section{Some special cases}\label{secdual}

In this section we discuss three cases of functions $K$ for which Theorem \ref{thmStVbound} does not apply directly either because $K$ is not a trace function or is a trace function associated to a ``bad" sheaf $\mcF$. 

\subsection{A general duality principle}
 We start with the general duality principle hinted in the introduction. Let $K:\Fqt\ra\Cc$ be some function; we define  the  Dirichlet series
 $$L(\vphi\times f\times K,s):=\sum_\stacksum{n,r\geq 1}{(nr^2,q)=1}\frac{\lambda(r,n)\lf(n)K(nr^2)}{(nr^2)^s},\ \Re s>1.$$
We will show that $L(\vphi\times f\times K,s)$ admits analytic continuation to $\Cc$  and satisfies a functional equation. We then use this functional equation to obtain non-trivial bounds for $S^t_V(\Kl_2,X)$ when the sheaf $\mcF$ is $\KL_2$.
 
 \subsubsection{The functional equation for standard $L$-functions}
 Let $\vphi$ and $f$ be $\GL_3$ and $\GL_2$ cusp forms of level one; to simplify the exposition and the shape of the functional equation we assume that $f$ is holomorphic of some weight $k\geq 2$. The Rankin--Selberg $L$-function $L(\vphi\times f,s)$ has analytic continuation to $\Cc$ and satisfies the functional equation
  $$\Lambda(\vphi\times f,s)=\eps(\vphi\times f)\Lambda(\ov\vphi\times f,1-s)$$
  where
  $$\eps(\vphi\times f)=i^{3k}=i^k=\pm 1$$
  is the root number;
 $$\Lambda(\vphi\times f,s)=L_\infty(\vphi\times f,s)L(\vphi\times f,s)$$
  is the $L$-function completed by the Archimedean local factor
 $$L_\infty(\vphi\times f,s)=\prod_\stacksum{i=1,2}{j=1,2,3}\Gamma_\Rr(s-\mu_{f,i}-\mu_{\vphi,j}),\ \Gamma_\Rr(s)=\pi^{-s/2}\Gamma(s/2)$$
 with
 $$\mu_{f,1}=-\frac{k-1}2,\ \mu_{f,2}=-\frac{k}2$$
 and $(\mu_{\vphi,j})_{j=1,2,3}$ being the Langlands parameters (of the principal series representation attached to $\vphi$) which satisfy
 $$\mu_{\vphi,1}+\mu_{\vphi,2}+\mu_{\vphi,3}=0$$
 (and belong to $(i\Rr)^3$ under the Ramanujan--Petersson conjecture); $\ov\vphi$ denotes the automorphic form dual to $\vphi$ (attached to the contragredient automorphic representation of $\vphi$) with parameters $(\ov\mu_{\vphi,j})_{j=1,2,3}$ and Hecke eigenvalues $(\ov{\lambda(r,n)})_{(r,n)}$. Under our assumption, $L_\infty(\vphi\times f,s)$ and $L_\infty(\ov\vphi\times f,s)$ have no poles for $\Re s>-1/2+\theta_3=-\frac{1}7$.
 
 For $\chi\mods q$ a non-trivial Dirichlet character, the twisted $L$-function
 $L(\vphi\times f\times\chi,s)$ has analytic continuation to $\Cc$ and satisfies the functional equation
 \begin{equation}\label{fcteqnchi}
 \Lambda(\vphi\times f\times\chi,s)=\eps(\vphi\times f\times\chi)\Lambda(\ov\vphi\times f\times\ov\chi,1-s)	
 \end{equation}
 where
 $$\Lambda(\vphi\times f\times\chi,s)=q^{3s}L_\infty(\vphi\times f,s)L(\vphi\times f\times\chi,s),$$
 and 
 $$\eps(\vphi\times f\times\chi)=\eps_\chi^6\eps(\vphi\times f)$$
with
 $$\eps_\chi=q^{-1/2}\sum_{x\in\Fqt}\chi(x)e(\frac{x}q)$$
being the normalized Gauss sum (notice that the Archimedean local factor does not depend on (the parity of) $\chi$ because $f$ is holomorphic; this is the main reason why we have made the simplifying assumption).

 \subsection{Functional equation for algebraically twisted $L$-functions}

  Let $K:\Fqt\ra\Cc$ be a function on $\Fqt$ (extended by $0$ to $\Fq$); we define the Mellin transform for $\chi\mods q$
  $$ \widetilde K(\chi)=\frac{1}{(q-1)^{1/2}}\sum_{x\in\Fqt}K(x)\ov\chi(x)$$
  and we define for $(n,q)=1$ the ``$\rm GL_6$-transform" of $K$ as
\begin{equation}\label{hatK6}
	\widecheck{K}^6(n)=\frac{1}{q^{1/2}}\sum_{x\in\Fqt}\Kl_6(nx;q)K(x)
\end{equation}
 where $\Kl_6$ is the hyper-Kloosterman sum in six variables
 $$\Kl_6(x;q)=\frac{1}{q^{5/2}}\sumsum_\stacksum{x_1,\cdots,x_6\in\Fqt}{x_1.\cdots.x_6=x}e(\frac{x_1+\cdots +x_6}{q}).$$
 
 \begin{proposition}\label{functional-algebraic}
  The completed series
 $$\Lambda(\vphi\times f\times K,s)=q^{3s}L_\infty(\vphi\times f,s)L(\vphi\times f\times K,s)$$
  has analytic continuation to $\Cc$ and satisfies the functional equation
 	\begin{eqnarray*}
	\Lambda(\vphi\times f\times K,s)&=&\eps(\vphi\times f)\Lambda(\ov\vphi\times f\times \widecheck{K}^6,1-s)+\\
	&+&\frac{q^{3s}}{(q-1)^{1/2}}\widetilde K(\chi_0)\Lambda^{(q)}(\vphi\times f,s)\\
	&-&\frac{\eps(\vphi\times f)q^{-3s}}{(q-1)^{1/2}}\widetilde K(\chi_0)\Lambda^{(q)}(\ov\vphi\times f,1-s)\\
\end{eqnarray*}
where $\chi_0=1_{|\Fqt}$ denotes the trivial character modulo $q$ and $\Lambda^{(q)}(\cdots)$ denotes the complete $L$-function with the local factor at $q$ being removed.
 \end{proposition}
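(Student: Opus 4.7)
The plan is to exploit the finite Mellin decomposition of $K$ into Dirichlet characters modulo $q$ and reduce to the classical functional equation \eqref{fcteqnchi} for each individual twist. First I would use orthogonality of characters on $\Fqt$ to write
\[
K(x) = \frac{1}{(q-1)^{1/2}} \sum_{\chi \mods q} \widetilde K(\chi)\,\chi(x), \qquad x \in \Fqt,
\]
so that, after inserting this identity into the Dirichlet series,
\[
L(\vphi \times f \times K, s) = \frac{1}{(q-1)^{1/2}} \sum_{\chi \mods q} \widetilde K(\chi)\, L(\vphi \times f \times \chi, s),
\]
with the convention $L(\vphi \times f \times \chi_0, s) = L^{(q)}(\vphi \times f, s)$ for the trivial character $\chi_0$ (this is forced by the coprimality condition $(nr^2,q)=1$). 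Analytic continuation to $\Cc$ follows because each individual twisted $L$-function on the right continues. Multiplying by $q^{3s} L_\infty(\vphi \times f, s)$ gives
\[
\Lambda(\vphi \times f \times K, s) = \frac{\widetilde K(\chi_0)}{(q-1)^{1/2}}\, q^{3s}\, \Lambda^{(q)}(\vphi \times f, s) + \frac{1}{(q-1)^{1/2}} \sum_{\chi \neq \chi_0} \widetilde K(\chi)\, \Lambda(\vphi \times f \times \chi, s).
\]

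Next I would apply the functional equation \eqref{fcteqnchi} to each non-trivial $\chi$ (which is primitive since $q$ is prime), pulling out the factor $\eps(\vphi \times f)\eps_\chi^6$ and turning the sum into one over $\Lambda(\ov\vphi \times f \times \ov\chi, 1-s)$. Expanding the latter as a Dirichlet series in the variables $m, r$ coprime to $q$, the problem reduces to computing
\[
\frac{1}{(q-1)^{1/2}} \sum_{\chi \neq \chi_0} \widetilde K(\chi)\, \eps_\chi^6 \, \ov\chi(n),\qquad n=mr^2,
\]
and recognising it as (essentially) $\widecheck K^6(n)$.

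The key computation is the identity
\[
\sum_{\chi \mods q} \widetilde K(\chi)\, \eps_\chi^6\, \ov\chi(n) = (q-1)^{1/2}\, \widecheck K^6(n), \qquad (n,q)=1,
\]
which I would prove by expanding $\eps_\chi^6$ as a sixfold sum over $(x_1,\dots,x_6)\in (\Fqt)^6$ and $\widetilde K(\chi)$ as a sum over $y \in \Fqt$, then performing the $\chi$-summation via orthogonality. The variables collapse to the single condition $x_1 \cdots x_6 \equiv n y \mods q$; collecting the normalisations of $\Kl_6$ and $\widetilde K$ yields precisely the right-hand side. The contribution of $\chi_0$ to the left-hand side is $\widetilde K(\chi_0)\, \eps_{\chi_0}^6 = q^{-3}\, \widetilde K(\chi_0)$, since $\eps_{\chi_0}= q^{-1/2}\sum_{x\in\Fqt}e(x/q) = -q^{-1/2}$; subtracting this gives
\[
\frac{1}{(q-1)^{1/2}} \sum_{\chi \neq \chi_0} \widetilde K(\chi)\, \eps_\chi^6\, \ov\chi(n) = \widecheck K^6(n) - \frac{q^{-3}}{(q-1)^{1/2}}\,\widetilde K(\chi_0).
\]

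Feeding this into the expression for $\Lambda(\vphi\times f\times K,s)$, the $\widecheck K^6$-piece reassembles into $\eps(\vphi \times f)\,\Lambda(\ov\vphi \times f \times \widecheck K^6, 1-s)$, while the correction term contributes, after using $q^{3(1-s)}\cdot q^{-3} = q^{-3s}$, exactly $-\eps(\vphi \times f)\,(q-1)^{-1/2}\, q^{-3s}\,\widetilde K(\chi_0)\,\Lambda^{(q)}(\ov\vphi \times f, 1-s)$. Together with the $\chi_0$-contribution already isolated, this produces the three terms in the stated functional equation. The proof is essentially bookkeeping; the main subtlety is tracking normalisations (the $(q-1)^{1/2}$ in $\widetilde K$, the $q^{-1/2}$ in $\eps_\chi$ and $\widecheck K^6$, and the $q^{-5/2}$ in $\Kl_6$) and handling the trivial character separately, both because its $L$-function is the unramified-at-$q$ one and because $\eps_{\chi_0}$ is not a unit.
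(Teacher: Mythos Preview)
Your proof is correct and follows essentially the same route as the paper's: Mellin decomposition of $K$ into characters, separation of the trivial character, application of the functional equation \eqref{fcteqnchi} to each non-trivial twist, and recombination via the identity $\widecheck{K}^6(n)=(q-1)^{-1/2}\sum_{\chi}\eps_\chi^6\,\widetilde K(\chi)\,\ov\chi(n)$ together with $\eps_{\chi_0}=-q^{-1/2}$. You supply more detail than the paper (in particular you actually derive the Mellin identity for $\widecheck K^6$), but the architecture is identical.
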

\proof By the Mellin inversion formula
 $$K(nr^2)=\frac{1}{(q-1)^{1/2}}\sum_{\chi\mods q}\widetilde K(\chi)\chi(nr^2),$$ 
 we have
 $$\Lambda(\vphi\times f\times K,s)=\frac{q^{3s}}{(q-1)^{1/2}}\widetilde K(\chi_0)\Lambda^{(q)}(\vphi\times f,s)+\frac{1}{(q-1)^{1/2}}\sum_\stacksum{\chi\mods q}{\chi\not=\chi_0}\widetilde K(\chi)\Lambda(\vphi\times f\times\chi,s)$$
 and the result follows by applying the functional equations \eqref{fcteqnchi}, the identity
  $$\widecheck{K}^6(nr^2)=\frac{1}{(q-1)^{1/2}}\sum_{\chi\mods q}\eps_\chi^6\widetilde K(\chi)\ov\chi(nr^2),$$
   and the value of the Ramanujan sum
   $\eps_{\chi_0}=-q^{-1/2}.$
\qed

By standard contour shifts we deduce the following.
\begin{corollary}\label{smoothfcteqn} Let $V$ be a smooth compactly supported function satisfying \eqref{thmeqtestfct}; we have for any $X\geq 1$
	\begin{eqnarray*}
	\sum_{(nr,q)=1}\lambda(r,n)\lf(n)K(nr^2)V(\frac{nr^2}{X})&=&
	\eps(\vphi\times f)\frac{X}{q^3}\sum_{(nr,q)=1}\ov{\lambda(r,n)}\lf(n)\widecheck{K}^6(nr^2)\widecheck V^6(\frac{nr^2}{q^6/X})	\\
	&+&O(Z^B\frac{|\widetilde{K}(\chi_0)|}{q^{1/2-\theta_3}})
	\end{eqnarray*}
	where
	$$\widecheck V^6(x)=\intc_{(3/2)}\widetilde V(1-s)\frac{L_\infty(\ov\vphi\times f,s)}{L_\infty(\vphi\times f,1-s)}x^{-s}ds$$
	and
	$\widetilde V(s)=\int_0^\infty V(y)y^s\frac{dy}y$
	is the Mellin transform of $V$. Here $\theta_3=5/14$ is the best known bound towards the Ramanujan--Petersson conjecture on $\rm GL_3$, $B\geq 0$ is an absolute constant (any $B>5/2$ will do) and the implicit constant in the $O(\cdots)$ term depends on $\vphi,f$ and the implicit constants in \eqref{thmeqtestfct}.
\end{corollary}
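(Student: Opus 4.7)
The plan is to apply Mellin inversion to the smooth weight $V$, substitute the functional equation from Proposition~\ref{functional-algebraic}, and process the three resulting integrals separately. The first integral yields the announced main term after the change of variables $s\mapsto 1-s$ that realises the $\widecheck V^6$ transform; the two ``degenerate'' integrals involving $\widetilde K(\chi_0)$ are absorbed in the claimed error via standard contour shifts.

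Since $V\in\mcC^\infty_c([1,2[)$, its Mellin transform $\widetilde V(s)=\int_0^\infty V(y)y^{s-1}dy$ is entire and, via integration by parts using \eqref{thmeqtestfct}, satisfies $|\widetilde V(\sigma+it)|\ll_{B,\sigma}\min(1,(Z/|t|)^B)$ on any fixed vertical strip. By Mellin inversion,
$$\sum_{(nr,q)=1}\lambda(r,n)\lf(n)K(nr^2)V(\tfrac{nr^2}{X})=\frac{1}{2\pi i}\int_{(3/2)}L(\vphi\times f\times K,s)\widetilde V(s)X^s\,ds,$$
and writing $L=\Lambda/(q^{3s}L_\infty(\vphi\times f,s))$ I insert the three pieces of Proposition~\ref{functional-algebraic}. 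For the first piece, I expand $\Lambda(\ov\vphi\times f\times\widecheck K^6,1-s)=q^{3(1-s)}L_\infty(\ov\vphi\times f,1-s)L(\ov\vphi\times f\times\widecheck K^6,1-s)$, clear the $q$-factors, and substitute $s\mapsto 1-s$; since the integrand is entire (as $\Lambda(\ov\vphi\times f\times\widecheck K^6,s)$ is entire and $L_\infty(\vphi\times f,1-s)$ has no zeros), shifting back to $\Re s=3/2$ and expanding the dual $L$-function as a Dirichlet series yields
$$\eps(\vphi\times f)\frac{X}{q^3}\sum_{(nr,q)=1}\ov{\lambda(r,n)}\lf(n)\widecheck K^6(nr^2)\widecheck V^6\!\left(\frac{nr^2}{q^6/X}\right),$$
which is exactly the main term via the defining formula for $\widecheck V^6$.

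The second piece simplifies to $\widetilde K(\chi_0)(q-1)^{-1/2}F(X)$ with $F(X)=\sum_{(nr,q)=1}\lambda(r,n)\lf(n)V(nr^2/X)=\frac{1}{2\pi i}\int_{(3/2)}L^{(q)}(\vphi\times f,s)\widetilde V(s)X^s\,ds$. Since $L^{(q)}(\vphi\times f,s)$ is entire, I shift to $\Re s=-1/2+\theta_3+\eps$ without crossing poles. On this line the functional equation and Stirling give the convexity bound $|L^{(q)}(\vphi\times f,\sigma+it)|\ll(1+|t|)^{6(1-\theta_3)+\eps}$; together with the $\widetilde V$-decay this localises to $|t|\leq Z$ and gives $F(X)\ll Z^B X^{-1/2+\theta_3+\eps}\leq Z^B$ for $X\geq 1$, whence a contribution $O(Z^B|\widetilde K(\chi_0)|/q^{1/2})$. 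The third piece, after expanding $\Lambda^{(q)}(\ov\vphi\times f,1-s)$ and substituting $s\mapsto 1-s$, becomes $-\eps(\vphi\times f)\widetilde K(\chi_0)X/(q^6(q-1)^{1/2})$ times $G(q^9/X)$, where $G(Y):=\sum_{(nr,q)=1}\ov{\lambda(r,n)}\lf(n)\widecheck V^6(nr^2/Y)$. Writing $G(Y)$ as a Mellin integral against $\widetilde V(1-s)L_\infty(\ov\vphi\times f,s)/L_\infty(\vphi\times f,1-s)\cdot L^{(q)}(\ov\vphi\times f,s)$ and shifting to $\Re s=-1/2+\theta_3+\eps$, the polynomial $t$-growth $(1+|t|)^{6(1-\theta_3)}$ of $L^{(q)}(\ov\vphi\times f,s)$ on this line is exactly cancelled by the Stirling decay $(1+|t|)^{-6(1-\theta_3)}$ of the archimedean Gamma ratio, leaving $|G(Y)|\ll Z^B Y^{-1/2+\theta_3+\eps}$; unwinding the $q$-powers yields a bound of order $|\widetilde K(\chi_0)|Z^B X^{3/2-\theta_3}q^{-2-9\theta_3+\eps}$, which for $X\leq q^3$ is comfortably $O(Z^B|\widetilde K(\chi_0)|/q^{1/2-\theta_3})$.

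The delicate step is the cancellation in the third piece: the convexity growth of $L^{(q)}(\ov\vphi\times f,s)$ on the shifted line and the Stirling decay of the archimedean Gamma ratio appearing in $\widecheck V^6$ are both of order $(1+|t|)^{\pm 6(1-\theta_3)}$, and only their precise matching leaves the vertical integral controlled by the rapid decay of $\widetilde V$ alone. All other steps reduce to routine Mellin--Barnes manipulations and bookkeeping of $q$-exponents.
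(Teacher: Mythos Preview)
Your approach is essentially that of the paper: Mellin inversion, substitute Proposition~\ref{functional-algebraic}, extract the main term from the first piece via $s\mapsto 1-s$ and a contour shift to $\Re s=3/2$, and bound the two degenerate pieces by further contour shifts. The only real difference is in the handling of the error integrals. The paper simply shifts both to $\Re s=0$, where $|X^s|=|(X/q^6)^s|=1$, so the resulting bound is automatically uniform in $X\geq 1$; the factor $q^{\theta_3}$ then arises solely from the local-factor bound $L_q(\vphi\times f,s)^{-1}\ll q^{\theta_3}$ on that line. Your more aggressive shift to $\Re s=-1/2+\theta_3+\eps$ would sharpen the second piece but makes the third piece grow with $X$, forcing the restriction $X\leq q^3$ that is not in the statement; to cover all $X\geq 1$ you would need to shift differently in different ranges. (There is also a small slip: after $s\mapsto 1-s$ the argument of your $G$ should be $q^6/X$, not $q^9/X$.) The paper's choice $\Re s=0$ avoids both the case distinction and the Stirling-cancellation bookkeeping you flag as delicate.
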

\proof We have 
$$V(x)=\intc \widetilde V(s)x^{-s}ds$$
(the integration is along the vertical line $\Re s=1+1/14$) so that
$$\sum_{(nr,q)=1}\lambda(r,n)\lf(n)K(nr^2)V(\frac{nr^2}{X})=\intc \frac{\Lambda(\vphi\times f\times K,s)}{L_\infty(\vphi\times f,s)}\widetilde V(s)(\frac{X}{q^3})^sds$$
$$=q^3\eps(\vphi\times f)\intc  L(\ov\vphi\times f\times \widecheck K^6,1-s)\frac{L_\infty(\ov\vphi\times f,1-s)}{L_\infty(\vphi\times f,s)}\widetilde V(s)(\frac{X}{q^6})^sds$$
$$+\frac{\widetilde K(\chi_0)}{(q-1)^{1/2}}\intc L^{(q)}(\vphi\times f,s)\widetilde V(s)X^sds$$
$$-\frac{\eps(\vphi\times f)\widetilde K(\chi_0)}{(q-1)^{1/2}}\intc L^{(q)}(\ov\vphi\times f,1-s)\frac{L_\infty(\ov\vphi\times f,1-s)}{L_\infty(\vphi\times f,s)}
\widetilde V(s)(\frac{X}{q^6})^sds$$
upon applying Proposition \ref{functional-algebraic}.
In the first integral we make the change of variable $s\leftrightarrow 1-s$ getting
$$\eps(\vphi\times f)\frac{X}{q^3}\intc_{(-1/14)}  L(\ov\vphi\times f\times \widecheck K^6,s)\frac{L_\infty(\ov\vphi\times f,s)}{L_\infty(\vphi\times f,1-s)}\widetilde V(1-s)(\frac{X}{q^6})^{-s}ds$$
and shifting the contour back to $\Re s=3/2$ without hitting any poles we obtain the first sum. For the second and third integrals we shift the contour to $\Re s=0$ and apply trivial bounds; the term $q^{\theta_3}$ arise from the following bound for the inverse of the local factor
$$L_{q}(\vphi\times f,s)^{-1}\ll q^{\theta_3},\ \Re s=0.$$
\qed

In particular we deduce that
\begin{equation}\label{trivialgeneral}
\sum_{n,r}\lambda(r,n)\lf(n)K(nr^2)V(\frac{nr^2}{X})\ll_{\vphi,f}Z^Bq^{3+o(1)}\bigl(\|\widecheck{K}^6\|_\infty +\|K\|_\infty q^{\theta_3-3}\bigr).	
\end{equation}
for some absolute constant $B\geq 0$  and the implicit constant  depends on $\vphi,f$ and the implicit constants in \eqref{thmeqtestfct}. This bound is stronger than the trivial bound $O(X^{1+o(1)})$ as long as $$X\geq q^{3+\delta},\ \delta>0$$ and one can use it to prove the ``convexity bound" \eqref{ConvexKtwisted} mentioned in the introduction.

Applying Theorem \ref{thmStVbound} to the sum
$$\sum_{n,r}\ov{\lambda(r,n)}\lf(n)\widecheck{K}^6(nr^2)\widecheck V^6(\frac{nr^2}{q^6/X})$$
(after a dyadic partition of unity), one deduces that
\begin{corollary}\label{cordual} Notations being as in the above corollary; suppose that the transform $\widecheck{K}^6$ defined in \eqref{hatK6} is $L^1$-close to the trace function $K'$ of a good sheaf $\mcF'$, then we have 
%\begin{eqnarray*}
%S^t_V(X)&&\ll  Z^{B'}\qoo\frac{X}{q^3}\bigg((q^6/X)^{3/4}q^{11/16}+(q^6/X)^{2/3}q^{11/12}
%+(q^6/X)q^{-1/8})+\|\widecheck{K}^6_{|\mcE(K)}\|(\frac{q^5}X+1)(\frac{q^6}X)^{\theta_2+\theta_3}\bigg)	\\
%&&\ll  Z^{B'}\qoo(X^{1/4}q^{35/16}+X^{1/3}q^{23/12}
%+q^{23/8}).	
%\end{eqnarray*}

\begin{multline*}
S^t_V(X)\ll  Z^{B'}\qoo\frac{X}{q^3}\bigg((q^6/X)^{3/4}q^{11/16}+(q^6/X)^{\frac{2-\theta_3}{3-2\theta_3}}q^{\frac{11(1-\theta_3)}{4(3-2\theta_3)}}
+(q^6/X)q^{-1/8})\\
+\|\widecheck{K}^6-K'\|_1(\frac{q^5}X+1)(\frac{q^6}X)^{\theta_3}\bigg).
\end{multline*}
Here  $\theta_3=5/14$
is the best known bound towards the Ramanujan--Petersson conjecture on $\rm GL_3$; $B'\geq 0$ is an absolute constant, $$\|\widecheck{K}^6-K'\|_1:=\sum_{x\in \Fqt}|\widecheck{K}^6(x)-K'(x)|,$$  and the implicit constant depends on $\vphi,f,|\widetilde K(\chi_0)|,\ C(\mcF')$ and the implicit constants in \eqref{thmeqtestfct}.
\end{corollary}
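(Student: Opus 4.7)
The strategy is to dualise $S^t_V(X)$ via the functional equation of Corollary \ref{smoothfcteqn}, approximate $\widecheck K^6$ by $K'$ in $L^1$, and apply Theorem \ref{thmStVbound} to the resulting sum. Concretely, Corollary \ref{smoothfcteqn} gives
\[
S^t_V(X)=\eps(\vphi\times f)\frac{X}{q^3}\,T(X)+O\!\left(Z^B\,|\widetilde K(\chi_0)|\,q^{\theta_3-1/2}\right),\qquad
T(X):=\sum_{(nr,q)=1}\ov{\lambda(r,n)}\,\lf(n)\,\widecheck K^6(nr^2)\,\widecheck V^6\!\left(\tfrac{nr^2}{q^6/X}\right).
\]
The error term is absorbed in the implicit constant (which is permitted to depend on $|\widetilde K(\chi_0)|$). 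Writing $\widecheck K^6=K'+\Delta$ with $\Delta:=\widecheck K^6-K'$, we split $T(X)=T_{K'}(X)+T_\Delta(X)$.

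For $T_{K'}(X)$, note that $\widecheck V^6$ is given by a Mellin--Barnes integral whose kernel is a ratio of Gamma factors of moderate growth. A standard stationary phase / integration by parts analysis (analogous to Lemma \ref{bounds-for-V} and Lemma \ref{lem-Bessel3}) shows that $\widecheck V^6$ is negligible outside a bounded range and that, after a smooth dyadic partition of unity, each piece is a bump function satisfying \eqref{thmeqtestfct} with an effective parameter $Z'\ll Z^{O(1)}$. Since $\ov\vphi$ is a $\GL_3$ cusp form and $K'$ is the trace function of a good sheaf, Theorem \ref{thmStVbound} applies to the sum of each dyadic piece against $\ov\lambda(r,n)\lf(n)K'(nr^2)$ with $X$ replaced by $q^6/X$. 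Summing over the $O(\log q)$ pieces and multiplying by $X/q^3$, we obtain
\[
\tfrac{X}{q^3}|T_{K'}(X)|\ll Z^{B'}\qoo\,\tfrac{X}{q^3}\left((q^6/X)^{3/4}q^{11/16}+(q^6/X)^{\frac{2-\theta_3}{3-2\theta_3}}q^{\frac{11(1-\theta_3)}{4(3-2\theta_3)}}+(q^6/X)q^{-1/8}\right),
\]
which is the first cluster of terms in the statement. In the ranges where the hypothesis $Z'^4 q^{11/4}<q^6/X<Z'^4 q^{(7-\theta_3)/2}$ of Theorem \ref{thmStVbound} fails, the general convexity bound \eqref{trivialgeneral} already dominates the claim.

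For $T_\Delta(X)$, estimate trivially. By the Kim--Sarnak bound \eqref{eq-Kim-Sarnak}, the $\GL_2$ Hecke bound, and the divisor bound, $\sum_{nr^2=m}|\ov{\lambda(r,n)}\lf(n)|\ll m^{\theta_3+o(1)}$. Since $\Delta$ depends only on $nr^2\!\mod q$ and $\widecheck V^6$ localises $nr^2$ to a dyadic interval of length $\asymp q^6/X$, each residue class modulo $q$ is visited $O(q^5/X+1)$ times. Hence
\[
|T_\Delta(X)|\ll \qoo\,(q^6/X)^{\theta_3}\bigl(\tfrac{q^5}{X}+1\bigr)\|\Delta\|_1,
\]
which after multiplication by $X/q^3$ yields the final term of the bound. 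The main technical obstacle is the first step in handling $T_{K'}$: verifying that $\widecheck V^6$, after dyadic decomposition and reparametrisation, fits into the framework of Theorem \ref{thmStVbound} with an effective $Z$-parameter polynomial in the original $Z$. This is a careful but routine analysis of the $\GL_6$-type integral transform whose asymptotics are governed by the Gamma factors in the Mellin--Barnes representation of $\widecheck V^6$.
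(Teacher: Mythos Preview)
Your proposal is correct and follows essentially the same route as the paper: dualise via Corollary~\ref{smoothfcteqn}, perform a dyadic partition of unity on $\widecheck V^6$, and apply Theorem~\ref{thmStVbound} to the resulting sum with $K'$ in place of $\widecheck K^6$ (the paper states this in one line just before the corollary). Your explicit splitting $\widecheck K^6=K'+\Delta$ and trivial estimation of the $\Delta$-contribution via Kim--Sarnak and the residue-class count make precise what the paper leaves implicit, and your remark that the individual bound $\sum_{nr^2=m}|\ov{\lambda(r,n)}\lf(n)|\ll m^{\theta_3+o(1)}$ uses Deligne's bound for the (holomorphic) form $f$ is consistent with the standing assumption in \S\ref{secdual}.
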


\begin{remark}
Ignoring the $Z$ parameter, the above bound is non-trivial as long as $$X\gg q^{3-1/12+\eta},\ \eta>0$$ and for $X=q^3$ we obtain
$$S^t_V(q^3)\ll q^{3-1/16+o(1)}.$$
\end{remark}

The typical situation to apply Corollary \ref{cordual} is when $K$ is the trace function attached to a geometrically  non-trivial and irreducible sheaf $\mcF$ of weight $0$. Unless $\mcF$ is geometrically isomorphic to a sheaf of the shape
$[x\mapsto \alpha x^{-1}]^*\KL_6$ for some $\alpha\not=0$, the function $\widecheck{K}^6$ is close to the trace function, $K'$ say, of a geometrically irreducible sheaf of weight $0$, namely the convolution sheaf $$\widecheck{\mcF}^6:=\KL_6\star [x\mapsto x^{-1}]^*\mcF$$ (alternatively $\widecheck{\mcF}^6$ is the sheaf obtained from $\mcF$ by applying $6$ times the composition of $[x\mapsto x^{-1}]^*$ and the geometric Fourier transform). In such a case one has
$$|\widetilde K(\chi_0)|,\ \|\widecheck{K}^6-K'\|_1=O_{C(\mcF)}(1).$$
We discuss three explicit examples below.

\subsection{Arithmetic progressions} 

One of the motivations for  investigating algebraic twists of $L$-functions is the level of distribution of their coefficients in arithmetic progressions of large modulus; specifically let 
$$\lambda_F(m)=\sum_{nr^2=m}\lambda(r,n)\lf(n);$$
one would like to prove  that for $(a,q)=1$ and  $V$ satisfying \eqref{thmeqtestfct}, one has
\begin{equation}\label{distriblevel}\sum_{m\equiv a\mods q}\lambda_F(m)V(\frac{m}X)=o_{\vphi,f,V}(\frac{X}q)
 \end{equation}
 for $q=X^\vartheta$ and some $\vartheta>0$ (a level of distribution) as large as possible (since $\vphi$ and $f$ are cuspidal there should be no main term).

Taking $$K(n)=q^{1/2}\delta_{n\equiv a\mods q}$$ we have for $(m,q)=1$
$$\widecheck K^6(m)=\Kl_6(am;q);$$
therefore applying Corollary \ref{smoothfcteqn} and bounding the resulting sums trivially (using Deligne's bound $|\Kl_6(am;q)|\leq 6$) we obtain
$$\sum_{nr^2\equiv a\mods q}\lambda(r,n)\lambda_f(n)V(\frac{nr^2}X)\ll_{V,\vphi,f} q^{5/2+o(1)}$$
which gives \eqref{distriblevel} as long as $$\vartheta<\vartheta_6=2/7.$$

To go beyond $\vartheta_6=2/7$ we would need, at least, to be able to improve the trivial bound for the sum $$\sum_{n,r}\ov{\lambda(r,n)}\lf(n)\Kl_6(anr^2;q)\widecheck V^6(\frac{nr^2}{X'})$$
 for
$X'=q^6/X$ a bit less than $q^{5/2}$. While $\Kl_6(an;q)$ is definitely ``good" (its geometric monodromy group is $\SL_6$ and its slopes at $\infty$ are equal to $1/6$), Theorem \ref{thmStVbound} is non-trivial only for
$$X'\gg q^{5/2+1/4+\delta},\ \delta>0.$$
This demonstrates the importance and difficulty of detecting cancellation for algebraically twisted sums of shorter length.
\begin{remark}
This level of distribution $\vartheta_6=2/7$ corresponds to Selberg's level of distribution $\vartheta_2=2/3$ for the divisor function (or the Fourier coefficients of modular forms) or $\vartheta_3=1/2$ for the ternary divisor function (see \cite{FrIwAnn,HBActa,FKMMath} for improvements in this last case).	
\end{remark}

\begin{remark}
 An Archimedean analog of this question is to improve the {\em Landau's type} bound  for the sharp-cut sum 
 $$\sum_{m\leq X}\lambda_F(m)\ll X^{5/7+o(1)}$$ (cf. \cite[Prop. 1.1]{FrIw} for the degree $m=6$ in the notations of that paper); under the Ramanujan--Petersson conjecture, this amounts to  non-trivial estimates for $$\sum_{m\leq X'}\ov{\lambda_F(m)}\,e(t(m/X')^{1/6})$$ when $X'$ is a bit less than $t^{5/2}$. This has recently been worked out by the first named author and Q. Sun in \cite{Lin-Sun} and this might suggest that if the modulus $q$ is a suitably factorable integer (either a smooth number or a large power of a fixed prime number) other exponential sum methods might allow to pass the $2/7$ barrier for such a composite modulus.

% The outcome is the following
%
%\begin{theorem} Suppose that $\mcF$ is not geometrically isomorphic to a sheaf of the shape
%$[x\mapsto \alpha x^{-1}]^*\KL_6$ and that the sheaf $\widecheck{\mcF}^6=\KL_6\star [x\mapsto x^{-1}]^*\mcF$ is good, we have
%
%\begin{eqnarray*}
%S^t_V(X)&&\ll  Z^{B'}\qoo\frac{X}{q^3}\bigg((q^6/X)^{3/4}q^{11/16}+(q^6/X)^{2/3}q^{11/12}
%+(q^6/X)q^{-1/8})\bigg)	\\
%&&\ll  Z^{B'}\qoo(X^{1/4}q^{35/16}+X^{1/3}q^{23/12}
%+q^{23/8}).	
%\end{eqnarray*}
%
%	
%\end{theorem}
%\begin{remark}
%This bound is non-trivial as long as $$X\gg q^{3-1/12+\eta},\ \eta>0$$ and when $X=q^3$ we obtain
%$$S^t_V(q^3)\ll q^{3-1/16+o(1)}.$$
\end{remark}

\subsection{Additive characters}\label{secfourier}

We now discuss Theorem \ref{thmStVbound} when $K$ is a trace function of a sheaf $\mcF$ which is not Fourier: this implies that for $q$ large enough (depending on $C(\mcF)$) $K$ is proportional to an additive character $\psi(\bullet)=e(\frac{a\bullet}{q})$. 

In this case,  a much more general bound is expected  (cf. \cite{Miller}): for any $\alpha\in\Rr$, one should have
\begin{equation}\label{nontrivialadditive}
\sum_{n,r}\lambda(r,n)\lambda_f(n)e(\alpha nr^2)V(\frac{nr^2}{X})\ll_{\vphi,f,V} X^{1-\eta+o(1)}	
\end{equation}
for some $\eta>0$ (possibly $\eta=1/2$) and this should hold uniformly for $\alpha\in \Rr$. Such a bound would be analogous to Wilton's bound for Fourier coefficients of $\GL_2$ automorphic forms or Miller's bound for $\GL_3$ automorphic forms; however for ranks greater than $3$ such bounds are still unknown.  

As was pointed out in \cite{Miller} if $\alpha$ is a rational number the analytic properties of $L(\vphi\times f\times \chi,s)$ for $\chi$ a Dirichlet character of modulus dividing the denominator of $\alpha$ yield \eqref{nontrivialadditive} at least if $X$ is large compared to $q$. 
For instance, for $q$ a prime number, $(an,q)=1$ we have
$$\widecheck{e(\frac{a\bullet}q)}^6(n)=\Kl_5(-\ov an;q)-{(-1)^5}{q^{-3}}$$
and Corollary \ref{smoothfcteqn} together with Deligne's bound $|\Kl_5(an;q)|\leq 5$ yield \eqref{nontrivialadditive}  as long as
$$X\geq q^{3+\delta},\ \delta>0.$$
Now since the sheaf $[x\mapsto -\ov a x]^*\KL_5$ is good (it is Fourier, its geometric monodromy group is $SL_5$ and all its $\infty$-slopes equal $1/5$) we have 
$$\sum_{r,n}\lambda(r,n)\lf(n)e(\frac{anr^2}q)V(\frac{nr^2}{q^3})\ll q^{2+\theta_3+o(1)} +q^{3-1/16+o(1)}$$
(the first term on the right is the contribution of the $n$'s divisible by $q$) so that \eqref{nontrivialadditive} holds in the slightly wider range
$$X\geq q^{3-1/16+\delta},\ \delta>0.$$

\subsection{Kloosterman sums}\label{dualKl2} Finally we observe that the Kloosterman sheaf $\KL_2$ while being Fourier is definitely not good: neither (SL) nor (MO) is satisfied. On the other hand, 
$$\widecheck{\Kl_2}^6(n)=\Kl_4(n;q)-q^{-5/2}-q^{-7/2}$$
and the Kloosterman sheaf $\KL_4$
is good: its geometric monodromy group is $\SL_4$ and all its $\infty$-slopes are $1/4$. By  Corollary \ref{smoothfcteqn} we have
$$\sum_{r,n}\lambda(r,n)\lf(n)\Kl_2(nr^2;q)V(\frac{nr^2}{q^3})\ll_{\vphi,f,V}q^{3-1/16+o(1)}$$
which again is non-trivial as long as 
$$X\geq q^{3-1/16+\delta},\ \delta>0.$$

\begin{remark} Further examples of geometrically irreducible sheaves satisfying neither (MO) nor (SL) are the sheaves of the shape
$$\bigotimes_{i=1}^r\Sym_{k_i}\circ[\times\lambda_i]^*\KL_2$$
where $\Sym_{k}$ denotes the $k$-th symmetric power representation of $\SL_2$ and $\lambda_i,\ i=1,\cdots, r$ are distinct elements of $\Fqt$. The underlying trace function is
$$K:x\in\Fqt\mapsto \prod_{i=1}^r\sym_{k_i}(\theta_{\lambda_i x})$$
where $2\cos(\theta_{\lambda_i x})=\Kl_2(\lambda_i x)$ and $\sym(k\theta)=\sin((k+1)\theta)/\sin(\theta)$. In that case all the non-trivial slopes of the monodromy at $\infty$ are $1/2$ and the set of $\lambda$'s for which the condition in (MO) fails are precisely the $\lambda_i$. It should be possible to check for most or all of these sheaves $\mcF$ that $\widecheck{\mcF}^6$ is good.
\end{remark}

  \begin{bibdiv}

\begin{biblist}

%\bib{AHLS}{article}{
%  author={Aggarwal, K.},
%  author={Holowinsky, R.},
%  author={Lin, Y.},
%  author={Sun, Q.},
%  title={The Burgess bound via a trivial delta method},
%  note={\url{arXiv:1803.00542v1}},
%  date={2018},
%}   
\bib{AHLS}{article}{
   author={Aggarwal, Keshav},
   author={Holowinsky, Roman},
   author={Lin, Yongxiao},
   author={Sun, Qingfeng},
   title={The Burgess bound via a trivial delta method},
   journal={Ramanujan J.},
   volume={53},
   date={2020},
   number={1},
   pages={49--74}
}

\bib{BlomerAJM}{article}{
   author={Blomer, V.},
   title={Subconvexity for twisted $L$-functions on ${\rm GL}(3)$},
   journal={Amer. J. Math.},
   volume={134},
   date={2012},
   number={5},
   pages={1385--1421}
}
\bib{BloKha}{article}{
   author={Blomer, Valentin},
   author={Khan, Rizwanur},
   title={Twisted moments of $L$-functions and spectral reciprocity},
   journal={Duke Math. J.},
   volume={168},
   date={2019},
   number={6},
   pages={1109--1177}
}

%\bib{BlBr}{article}{
%   author={Blomer, V.},
%   author={Brumley, F.},
%   title={On the Ramanujan conjecture over number fields},
%   journal={Ann. of Math. (2)},
%   volume={174},
%   date={2011},
%   number={1},
%   pages={581--605},
%}
% 

\bib{CI}{article}{
   author={Conrey, J. B.},
   author={Iwaniec, H.},
   title={The cubic moment of central values of automorphic $L$-functions},
   journal={Ann. of Math. (2)},
   volume={151},
   date={2000},
   number={3},
   pages={1175--1216}
 %  issn={0003-486X},
%   review={\MR{1779567}},
%   doi={10.2307/121132},
}

\bib{DFI1.5}{article}{
   author={Duke, W.},
   author={Friedlander, J.},
   author={Iwaniec, H.},
   title={Bounds for automorphic $L$-functions},
   journal={Invent. Math.},
   volume={112},
   date={1993},
   number={1},
   pages={1--8}
}

%
% \bib{FKMd3}{article}{
%   author={Fouvry, {\'E}.},
%   author={Kowalski, E.},
%   author={Michel, Ph.},
%   title={On the exponent of distribution of the ternary divisor function},
%   journal={Mathematika},
%   note={\url{arXiv:1304.3199}},
%   date={2015},
%   volume={61},
%   number={1},
%   pages={121-144},
% }
%
\bib{FKM2}{article}{
   author={Fouvry, \'E.},
   author={Kowalski, E.},
   author={Michel, Ph.},
   title={Algebraic trace functions over the primes},
%   note={\url{arXiv:1211.6043}},
   journal={Duke Math. Journal},
   date={2014},
   volume={163},
   pages={1683-1736},
   number={9},
 }
 
\bib{FKM1}{article}{
   author={Fouvry, {\'E}.},
   author={Kowalski, E.},
   author={Michel, Ph.},
   title={Algebraic twists of modular forms and Hecke orbits},
   journal={GAFA},
   volume={25},
%   note={\url{arXiv:1207.0617}},
   date={2015},
   number={2},
   pages={580-657}
 }
 
 \bib{FKMMath}{article}{
   author={Fouvry, \'{E}tienne},
   author={Kowalski, Emmanuel},
   author={Michel, Ph.},
   title={On the exponent of distribution of the ternary divisor function},
   journal={Mathematika},
   volume={61},
   date={2015},
   number={1},
   pages={121--144},
%   issn={0025-5793},
%    doi={10.1112/S0025579314000096},
}

\bib{FKMS}{article}{
   author={Fouvry, \'{E}tienne},
   author={Kowalski, Emmanuel},
   author={Michel, Ph.},
   author={Sawin, Will},
   title={Lectures on applied $\ell$-adic cohomology},
   conference={
      title={Analytic methods in arithmetic geometry},
   },
   book={
      series={Contemp. Math.},
      volume={740},
      publisher={Amer. Math. Soc., [Providence], RI},
   },
   date={2019},
   pages={113--195},
%   review={\MR{4033731}},
 %  doi={10.1090/conm/740/14903},
}

%
%
% \bib{pisa}{article}{
%   author={Fouvry, {\'E}.},
%   author={Kowalski, E.},
%   author={Michel, Ph.},
%   title={Trace functions over finite fields and their applications},
%   book={
%     series={Colloquio de Giorgi},
%     publisher={Springer},
%   },
%   date={2014},
% }
% 
%
% \bib{short-sums}{article}{
%   author={Fouvry, {\'E}.},
%   author={Kowalski, E.},
%   author={Michel, Ph.},
%   author={Raju, C.},
%   author={Rivat, J.},
%   author={Soundararajan, K.},
%   title={On short sums of trace functions},
%   journal={Ann. Inst. Fourier},
%   date={2017},
%   volume={67},
%   pages={423--449},
% }
% 
% \bib{FI}{article}{
%  author={Friedlander, J.B.},
%  author={Iwaniec, H.},
%  title={Incomplete Kloosterman sums and a divisor problem},
%  note={(with an appendix by
%  B. J. Birch and E. Bombieri)},
%  journal={Ann. of Math. (2)},
%  volume={121},
%  date={1985},
%  number={2},
%  pages={319--350},
%}
% 
%\bib{Goldfeld}{book}{
%   author={Goldfeld, D.},
%   title={Automorphic forms and $L$-functions for the group ${\rm
%   GL}(n,\Rr)$},
%   series={Cambridge Studies in Advanced Mathematics},
%   volume={99},
%   note={With an appendix by Kevin A. Broughan},
%   publisher={Cambridge University Press, Cambridge},
%   date={2006},
%   pages={xiv+493},
%}

\bib{FrIwAnn}{article}{
   author={Friedlander, John B.},
   author={Iwaniec, Henryk},
   title={Incomplete Kloosterman sums and a divisor problem},
   note={With an appendix by Bryan J. Birch and Enrico Bombieri},
   journal={Ann. of Math. (2)},
   volume={121},
   date={1985},
   number={2},
   pages={319--350},
%   issn={0003-486X},
%   review={\MR{786351}},
%   doi={10.2307/1971175},
}

\bib{FrIw}{article}{
   author={Friedlander, J. B.},
   author={Iwaniec, H.},
   title={Summation formulae for coefficients of $L$-functions},
   journal={Canad. J. Math.},
   volume={57},
   date={2005},
   number={3},
   pages={494--505}
}

\bib{Goldfeld}{book}{
   author={Goldfeld, Dorian},
   title={Automorphic forms and $L$-functions for the group ${\rm
   GL}(n,\bold R)$},
   series={Cambridge Studies in Advanced Mathematics},
   volume={99},
   note={With an appendix by Kevin A. Broughan},
   publisher={Cambridge University Press, Cambridge},
   date={2006},
   pages={xiv+493},
   isbn={978-0-521-83771-2},
   isbn={0-521-83771-5},
}

\bib{HBActa}{article}{
   author={Heath-Brown, D. R.},
   title={The divisor function $d_3(n)$ in arithmetic progressions},
   journal={Acta Arith.},
   volume={47},
   date={1986},
   number={1},
   pages={29--56},
%   issn={0065-1036},
%     doi={10.4064/aa-47-1-29-56},
}

\bib{HB}{article}{
   author={Heath-Brown, D. R.},
   title={A new form of the circle method, and its application to quadratic
   forms},
   journal={J. Reine Angew. Math.},
   volume={481},
   date={1996},
   pages={149--206}
}

\bib{HMQ}{article}{
   author={Holowinsky, R.},
   author={Munshi, R.},
   author={Qi, Z.},
   title={Character sums of composite moduli and hybrid subconvexity},
   conference={
      title={Advances in the theory of automorphic forms and their
      $L$-functions},
   },
   book={
      series={Contemp. Math.},
      volume={664},
      publisher={Amer. Math. Soc., Providence, RI},
   },
   date={2016},
   pages={135--148},
 }

 \bib{HN}{article}{
 author={Holowinsky, R.},
 author={Nelson, P. D.},
 title={Subconvex bounds on $\rm GL_3$ via degeneration to frequency zero},
 journal={Math. Ann.},
 volume={372},
 date={2018},
 number={1-2},
 pages={299--319},
}   

%\bib{iwaniec}{book}{
%   author={Iwaniec, H.},
%   title={Topics in classical automorphic forms},
%   series={Graduate Studies in Mathematics},
%   volume={17},
%   publisher={American Mathematical Society, Providence, RI},
%   date={1997},
%   pages={xii+259},
%}

\bib{GKM}{book}{
   author={Katz, N. M.},
   title={Gauss sums, Kloosterman sums, and monodromy groups},
   series={Annals of Mathematics Studies},
   volume={116},
   publisher={Princeton University Press},
   address={Princeton, NJ},
   date={1988},
}

\bib{ESDE}{book}{
   author={Katz, N. M.},
   title={Exponential sums and differential equations},
   series={Annals of Mathematics Studies},
   volume={124},
   publisher={Princeton University Press},
   address={Princeton, NJ},
   date={1990},
}

\bib{KimSar}{article}{
   author={Kim, Henry H.},
   author={Sarnak, Peter}
   title={Refined estimates towards the Ramanujan and Selberg conjectures}
   note={Appendix to H. Kim, Functoriality for the exterior square of ${\rm GL}_4$ and the
   symmetric fourth of ${\rm GL}_2$},
   journal={J. Amer. Math. Soc.},
   volume={16},
   date={2003},
   number={1},
   pages={139--183},
}

 \bib{KLMS}{article}{
   author={Kowalski, Emmanuel},
   author={Lin, Yongxiao}
   author={Michel, Ph.},
author={Sawin, Will},
   title={Periodic twists of ${\rm GL}_3$-automorphic forms},
   journal={Forum Math. Sigma},
   volume={8},
   date={2020},
   pages={Paper No. e15, 39}
%   note={\url{arXiv:1905.05080}}
}

 \bib{Lin}{article}{
   author={Lin, Yongxiao},
   title={Bounds for twists of $\rm GL(3)$ $L$-functions},
   journal={J. Eur. Math. Soc. (JEMS)},
   volume={23},
   date={2021},
   number={6},
   pages={1899--1924},
%   issn={1435-9855},
 %  review={\MR{4244519}},
%   doi={10.4171/JEMS/1046},
%    note={\url{arXiv:1810.00539}},
}

\bib{Lin-Sun}{article}{
   author={Lin, Y.},
   author={Sun, Q.}
   title={Analytic twists of $\rm GL_3\times \rm GL_2$ automorphic forms},
    journal={Int. Math. Res. Not. IMRN},
   date={2021},
   number={19},
   pages={15143--15208}
}

\bib{LWY}{article}{
   author={Liu, Jianya},
   author={Wang, Yonghui},
   author={Ye, Yangbo},
   title={A proof of Selberg's orthogonality for automorphic $L$-functions},
   journal={Manuscripta Math.},
   volume={118},
   date={2005},
   number={2},
   pages={135--149},
}

\bib{Miller}{article}{
   author={Miller, Stephen D.},
   title={Cancellation in additively twisted sums on ${\rm GL}(n)$},
   journal={Amer. J. Math.},
   volume={128},
   date={2006},
   number={3},
   pages={699--729},
}

\bib{MS}{article}{
   author={Miller, Stephen D.},
   author={Schmid, Wilfried},
   title={Automorphic distributions, $L$-functions, and Voronoi summation
   for ${\rm GL}(3)$},
   journal={Ann. of Math. (2)},
   volume={164},
   date={2006},
   number={2},
   pages={423--488}
}

%\bib{Molteni}{article}{
%   author={Molteni, Giuseppe},
%   title={Upper and lower bounds at $s=1$ for certain Dirichlet series with
%   Euler product},
%   journal={Duke Math. J.},
%   volume={111},
%   date={2002},
%   number={1},
%   pages={133--158},
%}

%\bib{Munshi}{article}{
%   author={Munshi, R.},
%   title={The circle method and bounds for $L$-functions---IV: Subconvexity
%   for twists of $\rm GL(3)$ $L$-functions},
%   journal={Ann. of Math. (2)},
%   volume={182},
%   date={2015},
%   number={2},
%   pages={617--672},
%}
\bib{Munshi15}{article}{
   author={Munshi, Ritabrata},
   title={The circle method and bounds for $L$-functions---III: $t$-aspect
   subconvexity for $GL(3)$ $L$-functions},
   journal={J. Amer. Math. Soc.},
   volume={28},
   date={2015},
   number={4},
   pages={913--938},
}

\bib{Munshi16}{article}{
   author={Munshi, Ritabrata},
   title={Twists of $\GL(3)$ $L$-functions},
 
 conference={
      title={ In: Müller W., Shin S.W., Templier N. (eds) Relative Trace Formulas},
   },
   book={
      series={Simons Symposia},
%      volume={},
      publisher={Springer, Cham},
   },
     date={2021},
   pages={351--378}
   %note={\url{arXiv:1604.08000}},
%  doi={10.1007/978-3-030-68506-5_11}
}

 \bib{SubGL2GL3}{article}{
   author={Munshi, Ritabrata},
   title={Subconvexity for $\rm {GL(3)\times GL(2)}\ L$-functions in
   $t$-aspect},
   journal={J. Eur. Math. Soc. (JEMS)},
   volume={24},
   date={2022},
   number={5},
 pages={1543--1566},
 %  review={\MR{4404783}},
%   doi={10.4171/jems/1131},
}

 \bib{PY}{article}{
   author={Petrow, Ian},
   author={Young, Matthew P.},
   title={The Weyl bound for Dirichlet $L$-functions of cube-free conductor},
   journal={Ann. of Math. (2)},
   volume={192},
   date={2020},
   number={2},
   pages={437--486}
 }

\bib{Qi}{article} {
    author = {Qi, Zhi},
   title={Theory of fundamental Bessel functions of high rank},
   journal={Mem. Amer. Math. Soc.},
   volume={267},
   date={2020},
   number={1303},
   pages={vii+123},
   issn={0065-9266},
   isbn={978-1-4704-4325-2},
   isbn={978-1-4704-6405-9}
}

\bib{arl}{article}{
   author={Rojas-Le\'{o}n, Antonio},
   title={Local convolution of $\ell$-adic sheaves on the torus},
   journal={Math. Z.},
   volume={274},
   date={2013},
   number={3-4},
   pages={1211--1230},
   issn={0025-5874},}
   
    \bib{sharma}{article}{
   author={Sharma, Prahlad},
   title={Subconvexity for $GL(3)\times GL(2)$ twists in level aspect},
   note={\url{arXiv:1906.09493}},
   journal={Preprint},
   date={2019},
   %pages={23 pages}
}

%\bib{Zac}{article}{
%     author = {Zacharias, Rapha\"el},
%     title = {Simultaneous non-vanishing for Dirichlet $L$-functions},
%     journal = {Annales de l'Institut Fourier},
%     publisher = {Association des Annales de l'institut Fourier},
%     volume = {69},
%     number = {4},
%     year = {2019},
%     pages = {1459-1524},
%    }

% \bib{FZ}{article}{
% author={Zhou, Fan},
% title={The Voronoi formula on $GL(3)$ with ramification},
% note={\url{arXiv:1806.10786}},
% date={2018},
%}

\end{biblist}

\end{bibdiv}

\end{document}